\documentclass[12pt,reqno]{amsart}%
\usepackage{palatino}
\usepackage{amssymb}
\usepackage{enumitem}
\usepackage{geometry}
\usepackage{graphicx}
\usepackage{microtype}
\usepackage{tikz}
\usepackage{xcolor}
\usepackage{amsmath}
\usepackage{amsfonts}%
\providecommand{\U}[1]{\protect\rule{.1in}{.1in}}

\newtheorem{theorem}{Theorem}[section]
\newtheorem{proposition}[theorem]{Proposition}
\newtheorem{corollary}[theorem]{Corollary}
\newtheorem{lemma}[theorem]{Lemma}
\newtheorem{definition}[theorem]{Definition}
\newtheorem{example}[theorem]{Example}
\newtheorem{remark}[theorem]{Remark}
\numberwithin{equation}{section}

\begin{document}
\title[Closed range and essential norms]{Closed Range and Essential Norms of Composition Operators on Weighted Dirichlet Spaces}

\author{Caixing Gu}
\address{Caixing Gu: Department of Mathematics, California Polytechnic State University, San Luis Obispo, California 93407, USA.}
\email{cgu@calpoly.edu}

\author{Li He*}
\address{Li He: School of Mathematics and Information Science, Guangzhou University, Guangzhou 510006, China.}
\email{helichangsha1986@163.com}

\author{Xiaofeng Wang}
\address{Xiaofeng Wang: School of Mathematics and Information Science, Guangzhou University, Guangzhou 510006, China.}
\email{wxf@gzhu.edu.cn}

\author{Yuanhao Yan*}
\address{Yuanhao Yan: School of Mathematics and Information Science, Guangzhou University, Guangzhou 510006, China.}
\email{18322912287@163.com}

\thanks{2020 Mathematics Subject Classification: Primary 47B33; Secondary 47B10, 30H20, 46E22}	
	
\thanks{Key words: Composition operator, weighted Dirichlet space,
closed range, Reverse Carleson measure, essential norm}
	
\thanks{Li He is supported by NNSF of China (Grant No. 12371127, 12471119)}

\begin{abstract}
We study closed range and essential norms of bounded composition operators $C_\varphi$ on weighted Dirichlet spaces $\mathcal{D}_{\alpha}$. For $-1<\alpha<0$, we first consider maps whose images are obtained by removing a compact subset from a simply connected subdomain of $\mathbb{D}$. In this setting, closed range is equivalent to the Reverse Carleson property and the Reverse Carleson Disk Condition for the counting measure $\mu_{\varphi,\alpha}$, uniform weighted area density of the image, $\mathbb{T}\subset\overline{\varphi(\mathbb{D})}$, and the inclusion of an outer annulus in the image. We also obtain a closed range characterization under the Uniform Tail Condition. For all $\alpha>-1$, we establish two-sided essential norm estimates in terms of local averages of the generalized Nevanlinna counting function and an exact formula using boundary tail integrals. If the normalized counting density has vanishing oscillation at the boundary, we obtain exact formulas in terms of its boundary limsup, Berezin transform, and local averages.
\end{abstract}
\maketitle

\section{Introduction}

Let $\widehat{\mathbb{C}}=\mathbb{C}\cup\{\infty\}$ be the Riemann sphere, $\mathbb{D}$ the open unit disk in the complex plane $\mathbb{C}$,
$\mathbb{T}$ the unit circle, and $dA$ the normalized area measure on
$\mathbb{D}$. For $\alpha>-1$, define
\[
dA_{\alpha}(z) = (1+\alpha)(1-|z|^{2})^{\alpha}dA(z),
\]
and
\[
A_{\alpha}(E)=\int_{E}dA_{\alpha}(z)
\]
for measurable sets \(E\subset\mathbb{D}\). We also write
\[
A(E)=\int_E dA(z)=A_0(E)
\]
for the unweighted area of \(E\). The weighted Bergman space
\(A_{\alpha}^{2}\) consists of analytic functions \(g\) on \(\mathbb{D}\)
satisfying
\[
\|g\|_{A_{\alpha}^{2}}^{2} = \int_{\mathbb{D}}|g(z)|^{2}dA_{\alpha}%
(z)<\infty.
\]

The weighted Dirichlet space $\mathcal{D}_{\alpha}$ consists of analytic
functions $f$ on $\mathbb{D}$ satisfying
\[
\|f\|_{\mathcal{D}_{\alpha}}^{2}
=|f(0)|^{2}
+\int_{\mathbb{D}}|f^{\prime}(z)|^{2}\,dA_{\alpha}(z)
<\infty.
\]
We denote by
\[
\mathcal{D}_{\alpha,0} = \{f\in\mathcal{D}_{\alpha}:f(0)=0\}
\]
the closed codimension-one subspace of functions vanishing at the origin. Let
\[
P_{0}:\mathcal{D}_{\alpha}\longrightarrow\mathcal{D}_{\alpha,0}, \qquad
P_{0}f=f-f(0).
\]
Then $P_{0}$ is the orthogonal projection onto $\mathcal{D}_{\alpha,0}$, and
\[
\mathcal{D}_{\alpha}= \mathbb{C}\oplus\mathcal{D}_{\alpha,0}.
\]

Let $\varphi$ be a nonconstant analytic self-map of $\mathbb{D}$. The
composition operator induced by $\varphi$ is defined by
\[
C_{\varphi}f=f\circ\varphi.
\]

We first record a normalization for the closed range problem. Let $\Phi$ be a
nonconstant analytic self-map of $\mathbb{D}$, set $a=\Phi(0)$, and define
\[
\sigma_{a}(z)=\frac{a-z}{1-\overline{a}z}, \qquad\varphi=\sigma_{a}\circ\Phi.
\]
Then $\varphi(0)=0$. Since $\sigma_{a}$ is involutive, we have $\Phi
=\sigma_{a}\circ\varphi, $ and consequently
\[
C_{\Phi}=C_{\varphi}C_{\sigma_{a}}, \qquad C_{\varphi}=C_{\Phi}C_{\sigma_{a}%
}.
\]
Because $C_{\sigma_{a}}$ is bounded and invertible on $\mathcal{D}_{\alpha}$,
it follows that
\[
C_{\Phi}\text{ is bounded} \quad\Longleftrightarrow\quad C_{\varphi}\text{ is
bounded},
\]
and
\[
 C_{\Phi} \text{ has closed range on } \mathcal{D}_{\alpha}
\quad\Longleftrightarrow\quad
 C_{\varphi} \text{ has closed range on } \mathcal{D}_{\alpha}.
\]

Moreover, since $\sigma_{a}$ extends to a homeomorphism of $\overline
{\mathbb{D}}$ and maps $\mathbb{T}$ onto itself,
\[
\mathbb{T}\subset\overline{\Phi(\mathbb{D})} \quad\Longleftrightarrow
\quad\mathbb{T}\subset\overline{\varphi(\mathbb{D})}.
\]

The normalization also preserves the Compact-Hole Condition introduced
in Definition~\ref{def2.7}. Indeed, $\sigma_a$ maps simply connected
domains to simply connected domains and compact subsets to compact
subsets. For the statements below, the counting measures, density
conditions, and Uniform Tail Condition are formulated for the normalized
symbol $\varphi$.

Accordingly, throughout Section~2, unless otherwise stated, we work with a
normalized analytic self-map $\varphi$ satisfying $\varphi(0)=0. $ All
counting functions, measures, density conditions, and tail conditions
appearing in the closed range results of Section~2 are associated with this
normalized symbol.

If $C_{\varphi}$ is bounded on $\mathcal{D}_{\alpha}$ and $\varphi(0)=0$, then $\mathcal{D}_{\alpha,0}$ is invariant under $C_{\varphi}$. Let
\[
C_{\varphi,0} = C_{\varphi}\big|_{\mathcal{D}_{\alpha,0}}.
\]
Every function $f\in\mathcal{D}_{\alpha}$ admits the unique orthogonal
decomposition
\[
f=f(0)+(f-f(0)),
\]
where $f-f(0)\in\mathcal{D}_{\alpha,0}$. Thus, relative to the orthogonal
decomposition
\[
\mathcal{D}_{\alpha}= \mathbb{C}\oplus\mathcal{D}_{\alpha,0},
\]
we have
\[
C_{\varphi}= I_{\mathbb{C}}\oplus C_{\varphi,0}.
\]
Consequently,
\[
C_{\varphi}\text{ has closed range on }\mathcal{D}_{\alpha}\quad
\Longleftrightarrow\quad C_{\varphi,0} \text{ has closed range on }%
\mathcal{D}_{\alpha,0}.
\]

Since $\varphi$ is nonconstant, $C_{\varphi}$ is injective. Hence, whenever
$C_{\varphi}$ is bounded on $\mathcal{D}_{\alpha}$, the closedness of its
range is equivalent to $C_{\varphi}$ being bounded below.

The generalized Nevanlinna counting function is an important tool in the study
of composition operators. For $w\in\mathbb{D}$, define
\[
N_{\varphi,\alpha}(w) = \sum_{\varphi(z)=w}(1-|z|^{2})^{\alpha},
\]
where every preimage is counted according to multiplicity, and set
$N_{\varphi,\alpha}(w)=0$ when $w\notin\varphi(\mathbb{D})$. We also write
\[
n_{\varphi}(w) = \sum_{\varphi(z)=w}1
\]
for the number of preimages of $w$, counted with multiplicity; this number may
be infinite. Since $\varphi(0)=0$, the change of variables formula gives
\[
\|C_{\varphi}f\|_{\mathcal{D}_{\alpha}}^{2}
= |f(0)|^{2} + (1+\alpha)
\int_{\mathbb{D}}|f^{\prime}(w)|^{2}
N_{\varphi,\alpha}(w)\,dA(w),
\qquad f\in\mathcal{D}_{\alpha}.
\]
The analytic area formula applies to the nonnegative measurable weight $(1-|z|^{2})^{\alpha}$, with both sides of the above identity initially allowed to take the value $+\infty$, and the boundedness of $C_{\varphi}$ on $\mathcal{D}_{\alpha}$ ensures that both sides are finite for every $f\in\mathcal{D}_{\alpha}$. In particular, the first term vanishes for $f\in\mathcal{D}_{\alpha,0}$.
Let $\mu_{\varphi,\alpha}$ be the positive Borel measure on $\mathbb{D}$ given by
\[
d\mu_{\varphi,\alpha}(w)=N_{\varphi,\alpha}(w)\,dA(w).
\]

For $z,w\in\mathbb{D}$, let
\[
\rho(z,w) = \left|  \frac{z-w}{1-\overline zw}\right|  \quad\text{and}%
\quad\beta(z,w) = \operatorname{arctanh}\rho(z,w) = \frac12\log\frac
{1+\rho(z,w)}{1-\rho(z,w)}.
\]
For $r>0$, set
\[
D(z,r)=\{w\in\mathbb{D}:\beta(z,w)<r\}.
\]
We refer to $D(z,r)$ as the Bergman disk centered at $z$ with radius $r$. For $0<\eta<1$, set
\[
D_{\eta}(z)=\{w\in\mathbb{D}:\rho(z,w)<\eta\}.
\]
Thus $D(z,r)=D_{\tanh r}(z)$. For $0<t<1$, set $\mathbb{D}_{t}=\{z\in\mathbb{D}:|z|<t\}.$

A positive Borel measure $\mu$ on $\mathbb{D}$ is called a Reverse Carleson
measure for $A_{\alpha}^{2}$ if there exists $c>0$ such that
\[
\int_{\mathbb{D}}|g(w)|^{2}\,d\mu(w) \geq c\|g\|_{A_{\alpha}^{2}}^{2}, \qquad
g\in A_{\alpha}^{2}.
\]

We say that a positive Borel measure $\mu$ on $\mathbb{D}$ satisfies the
Reverse Carleson Disk Condition with respect to $A_{\alpha}$ if there
exist constants $\delta>0$ and $r>0$ such that
\[
\mu(D(z,r)) \geq\delta A_{\alpha}(D(z,r)), \qquad z\in\mathbb{D}.
\]
In \cite{MR4726060}, Reverse Carleson measure for the counting measure $n_{\varphi}\,dA$ refers to the existence of constants $\delta>0$ and $r>0$ such that
\[
\int_{D(z,r)}n_{\varphi}(w)\,dA(w)\geq\delta A(D(z,r)),\qquad z\in\mathbb{D}.
\]
This is precisely the Reverse Carleson Disk Condition in the unweighted case $\alpha=0$, since $N_{\varphi,0}=n_{\varphi}$ and $A_0=A$. In the present paper, the term ``Reverse Carleson measure for $A_{\alpha}^{2}$'' refers to the integral inequality above, required to hold for every $g\in A_{\alpha}^{2}$. These two notions are not equivalent in general. Thus the observation in \cite{MR4726060} that the Reverse Carleson Disk Condition alone need not imply closed range is consistent with Proposition~\ref{prop2.2}, which characterizes closed range using the integral inequality.

Composition operators on analytic function spaces have been extensively
studied, particularly with regard to their boundedness and compactness.
B. R. Choe, H. Koo, and W. Smith investigated these properties on holomorphic Sobolev
spaces and small spaces \cite{MR1975402,MR2270842}.
H. Wulan, D. Zheng, and K. Zhu obtained compactness criteria for composition
operators on BMOA and the Bloch space \cite{MR2529895}.
Background on analytic function spaces and operator theory can be found
in \cite{MR2537698,1}.

The closed range problem for composition operators is closely connected
with counting functions and Carleson-type inequalities.
D. H. Luecking's results on norm inequalities, dominating sets, and restriction
operators in weighted Bergman spaces
\cite{MR602889,MR722745,MR778090} provided a foundation for this approach.
N. Zorboska \cite{MR1236226} characterized the closed range property on
the Hardy space $H^2$ and weighted Bergman spaces through thickness
conditions for suitable level sets associated with counting functions.
Further developments in the weighted Bergman setting appear in
\cite{MR2872608}.
P. Ghatage, J. Yan, and D. Zheng \cite{MR1825915} studied geometric conditions
for the closed range property of composition operators on the Bloch space.

On the classical Dirichlet space, the connection between local measure
conditions and closed range is more delicate.
D. H. Luecking \cite{MR1637392} showed that the Reverse Carleson Disk Condition
for the counting measure does not by itself imply closed range.
N. Zorboska \cite{MR3998219} subsequently obtained a characterization
in terms of the Berezin transform under a vanishing oscillation assumption.
J. Pau and P. A. P\'erez \cite{MR3018017} obtained closed range criteria for
weighted Dirichlet spaces with $0<\alpha<1$ using generalized counting functions.
Further results on the closed range problem in Dirichlet and related
spaces can be found in \cite{MR4726060,MR3079836,MR3772158}.
For negative parameters, M. Jovovi\'{c} and B. MacCluer
\cite{MR1459789} related closed range to reverse Bergman inequalities and
proved that a univalent symbol inducing a bounded composition operator
with closed range must be an automorphism. Their boundary argument is
central to the geometric result below. We extend that argument to symbols
whose images have the form $U\setminus K$, where $U$ is simply connected
and $K$ is a compact subset of $U$, by transferring boundedness to a
conformal map onto $U$. This includes finitely connected images and
does not require $K$ to have finitely many connected components. We also study the Uniform Tail Condition as a separate route
to geometric characterizations for images of arbitrary connectivity.

For $r>0$, define
\[
\widetilde{\mu}_{\varphi,\alpha,r}(z) = \frac{\mu_{\varphi,\alpha}(D(z,r))}
{A_{\alpha}(D(z,r))}, \qquad z\in\mathbb{D}.
\]

We first record the standard equivalence between closed range of a bounded
normalized composition operator and the Reverse Carleson property of
$\mu_{\varphi,\alpha}$ for $A_{\alpha}^{2}$; see also
\cite{MR1459789} for negative parameters. Our geometric
characterizations use two different hypotheses: the Compact-Hole Condition
on the image, or the Uniform Tail Condition of the symbol.

The first main theorem, Theorem~\ref{thm2.14}, assumes $-1<\alpha<0$,
$\varphi(0)=0$, boundedness of $C_{\varphi}$ on $\mathcal{D}_{\alpha}$,
and the Compact-Hole Condition on $\Omega=\varphi(\mathbb{D})$: there exist
a simply connected domain $U\subseteq\mathbb{D}$ and a compact subset
$K\Subset U$ such that $\Omega=U\setminus K$. Under these hypotheses, the following
five conditions are equivalent:

\begin{enumerate}
[label=\textup{(\roman*)}]

\item $C_{\varphi}$ has closed range on $\mathcal{D}_{\alpha}$;

\item $\mu_{\varphi,\alpha}$ satisfies the Reverse Carleson Disk Condition;
that is, there exist $\delta>0$ and $r>0$ such that
\[
\mu_{\varphi,\alpha}(D(z,r)) \geq\delta A_{\alpha}(D(z,r)), \qquad
z\in\mathbb{D};
\]

\item there exist $\delta>0$ and $r>0$ such that
\[
A_{\alpha}(\varphi(\mathbb{D})\cap D(z,r)) \geq\delta A_{\alpha}(D(z,r)),
\qquad z\in\mathbb{D};
\]

\item there exists $r_{0}\in(0,1)$ such that
\[
\mathbb{D}\setminus\overline{\mathbb{D}_{r_{0}}} \subset\varphi(\mathbb{D});
\]

\item $\mu_{\varphi,\alpha}$ is a Reverse Carleson measure for $A_{\alpha}%
^{2}$.
\end{enumerate}

These conditions are also equivalent to
$\mathbb{T}\subset\overline{\Omega}$. The sufficiency of the annular inclusion
for closed range was established in \cite{MR1459789}.
To obtain the annular inclusion from boundary contact, we use the
decomposition $\Omega=U\setminus K$ in the Compact-Hole Condition.
Factoring $\varphi$ through a conformal map onto $U$ allows us to use
the boundary argument of \cite{MR1459789} and conclude that $U=\mathbb{D}$.
The compactness of $K$ then gives the required outer annulus.
The Compact-Hole Condition alone does not force closed range: the
dilation and power maps in Examples~\ref{ex2.16} and~\ref{ex2.17}
distinguish the two possibilities.

Our second main theorem extends the uniform tail characterization of
G. Cao and L. He \cite[Main Theorem~B]{MR4726060} to negative weighted Dirichlet
parameters, without imposing the Compact-Hole Condition on the image. Related
closed range results can be found in \cite{MR4060210}. Let $-1<\alpha<0$,
and let $\varphi$ be a nonconstant analytic self-map of $\mathbb{D}$
with $\varphi(0)=0$. For $M>0$, set
\[
E_{M} = \left\{  w\in\varphi(\mathbb{D}): \frac{N_{\varphi,\alpha}%
(w)}{(1-|w|^{2})^{\alpha}}>M \right\}  .
\]
We assume
\[
\lim_{M\to\infty} \sup_{\substack{f\in\mathcal{D}_{\alpha,0}%
\\\|f\|_{\mathcal{D}_{\alpha}}=1}} \int_{E_{M}}|f^{\prime}(w)|^{2}N_{\varphi,\alpha
}(w)\,dA(w) =0.
\]

Under this condition, $C_{\varphi}$ is bounded on $\mathcal{D}_{\alpha}$
by Lemma~\ref{lem2.21}. The closed range property of $C_{\varphi}$ is
equivalent to the Reverse Carleson Disk Condition, to the weighted area
density condition above, and to the Reverse Carleson property of
$\mu_{\varphi,\alpha}$.

The second part of the paper concerns essential norms. Since $\mathcal{D}%
_{\alpha,0}$ need not be invariant under $C_{\varphi}$ when $\varphi(0)\ne0$,
we consider an appropriate compression of $C_{\varphi}$ to $\mathcal{D}%
_{\alpha,0}$. Through the unitary differentiation map from $\mathcal{D}%
_{\alpha,0}$ onto $A_{\alpha}^{2}$, this compression is identified with a
weighted composition operator on $A_{\alpha}^{2}$. This allows us to reduce
the essential norm problem to that of a positive Toeplitz operator.

Essential norm estimates for weighted composition operators between
weighted Bergman spaces and between Hardy spaces were obtained in
\cite{MR2342670}, and for
Bergman spaces on strongly pseudoconvex domains in \cite{MR2329700}.
Related estimates for composition operators on weighted Dirichlet
spaces appear in \cite{MR3772661}.
For every $\alpha>-1$ and every bounded composition operator
$C_\varphi$ on $\mathcal{D}_\alpha$, Theorem~\ref{thm3.5}
applies S. Yamaji's estimate \cite[Theorem~B]{MR3127818}
to obtain a two-sided essential norm estimate in terms of local
averages of $N_{\varphi,\alpha}$.
Proposition~\ref{prop3.6} also gives the exact boundary tail formula
\[
\|C_\varphi\|_e^2
=(1+\alpha)\lim_{R\to1^-}
\sup_{\substack{g\in A_\alpha^2\\\|g\|_{A_\alpha^2}=1}}
\int_{|w|>R}|g(w)|^2N_{\varphi,\alpha}(w)\,dA(w).
\]
To obtain an exact local average formula, we impose a boundary
oscillation condition on the normalized counting density
$\psi(w)=N_{\varphi,\alpha}(w)/(1-|w|^2)^\alpha$.
If $\sup_{w\in D(z,s)}|\psi(w)-\psi(z)|\to0$ as $|z|\to1^-$ for every
fixed $s>0$, Theorem~\ref{thm3.8} yields
\[
\|C_\varphi\|_e^2
=\limsup_{|z|\to1^-}\psi(z)
=(1+\alpha)\limsup_{|z|\to1^-}
\frac{\displaystyle\int_{D(z,r)}N_{\varphi,\alpha}(w)\,dA(w)}
{A_\alpha(D(z,r))},\qquad r>0.
\]
The same quantity is the boundary limsup of the Berezin transform
of $\psi$. These formulas apply the boundary behavior of positive
Toeplitz operators to the counting density; related results for
symbols of vanishing oscillation are given in \cite{Hagger2017}.

The paper is organized as follows. Section~2 contains the closed range
characterizations under the Compact-Hole Condition (Theorem~\ref{thm2.14}) and
under the Uniform Tail Condition (Theorem~\ref{thm2.22}). Section~3
establishes the general essential norm estimate and the exact boundary
tail formula for all $\alpha>-1$, followed by exact local average
formulas under the boundary oscillation condition.

\section{Closed range of $C_{\varphi}$}
Throughout this section, we call $\varphi$ a normalized self-map of
$\mathbb{D}$ in $\mathcal{D}_{\alpha}$ if $\varphi$ is a nonconstant
analytic self-map of $\mathbb{D}$, $\varphi(0)=0$, and $C_{\varphi}$ is
bounded on $\mathcal{D}_{\alpha}$.

\subsection{Geometric necessary conditions for closed range of $C_{\varphi}$}\mbox{}

By the decomposition established in the Introduction, the closed range of $C_{\varphi}$ on $\mathcal{D}_{\alpha}$ is equivalent to that for its
restriction $C_{\varphi,0}$ to $\mathcal{D}_{\alpha,0}$. We may therefore
carry out the derivative based arguments below on $\mathcal{D}_{\alpha,0}$.

We first characterize closed range of $C_{\varphi}$ through the Reverse
Carleson property of $\mu_{\varphi,\alpha}$ for $A_{\alpha}^{2}$. We then
derive necessary geometric conditions, including the Reverse Carleson Disk
Condition and the requirement that the closure of $\varphi(\mathbb{D})$
contain the entire unit circle.

\begin{lemma}
\label{lem2.1} Let $\varphi$ be a nonconstant analytic self-map of
$\mathbb{D}$ with $\varphi(0)=0$. If $-1<\alpha\leq0$, then
\[
N_{\varphi,\alpha}(w)\geq n_{\varphi}(w)(1-|w|^{2})^{\alpha}, \qquad
w\in\mathbb{D }.
\]

\end{lemma}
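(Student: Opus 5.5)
The plan is to apply the Schwarz lemma to each preimage separately, and then use the sign of $\alpha$ to reverse the inequality $|w|\le|z|$ into one for the weights.

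\emph{Trivial case.} If $w\notin\varphi(\mathbb{D})$, then $n_{\varphi}(w)=0$ and $N_{\varphi,\alpha}(w)=0$. Both sides vanish and there is nothing to prove.

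\emph{Pointwise comparison.} Suppose $w\in\varphi(\mathbb{D})$ and $\varphi(z)=w$. Since $\varphi(0)=0$ and $\varphi$ maps $\mathbb{D}$ into itself, the Schwarz lemma gives
\[
|w|=|\varphi(z)|\le|z|, \qquad\text{hence}\qquad 1-|z|^{2}\le 1-|w|^{2}.
\]
Both quantities are positive. Because $-1<\alpha\le 0$, the map $t\mapsto t^{\alpha}$ is nonincreasing on $(0,\infty)$ (constant when $\alpha=0$). Therefore
\[
(1-|z|^{2})^{\alpha}\ge(1-|w|^{2})^{\alpha}
\]
for every preimage $z$ of $w$.

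\emph{Summation.} Next I sum this inequality over all preimages $z$ of $w$, counted with multiplicity. The left-hand side becomes $N_{\varphi,\alpha}(w)$. The right-hand side becomes $n_{\varphi}(w)(1-|w|^{2})^{\alpha}$, which proves the lemma.

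\emph{Infinite counting function.} If $n_{\varphi}(w)=\infty$, each summand is at least the fixed positive number $(1-|w|^{2})^{\alpha}$. The series defining $N_{\varphi,\alpha}(w)$ therefore diverges, and the inequality holds in $[0,\infty]$ as $\infty\ge\infty$.

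The argument has no real obstacle. The only points needing care are the direction of the monotonicity of $t^{\alpha}$ for nonpositive $\alpha$, and the convention allowing $n_{\varphi}(w)=\infty$.
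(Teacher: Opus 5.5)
Your proposal is correct and follows the paper's proof: Schwarz's lemma gives $1-|z|^{2}\le 1-|w|^{2}$ for each preimage, the monotonicity of $t\mapsto t^{\alpha}$ for $\alpha\le 0$ reverses this for the weights, and summing over preimages with multiplicity yields the bound. Your explicit treatment of the cases $w\notin\varphi(\mathbb{D})$ and $n_{\varphi}(w)=\infty$ is a harmless addition that the paper leaves implicit.
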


\begin{proof}
By the Schwarz lemma, $|\varphi(z)|\leq|z|$ for all $z\in\mathbb{D}$. Thus,
whenever $\varphi(z)=w$, we have $|w|\leq|z|$, and hence
\[
1-|z|^{2}\leq1-|w|^{2} .
\]
Since $-1<\alpha\leq0$, the function $t\mapsto t^{\alpha}$ is nonincreasing
on $(0,\infty)$. Therefore,
\[
(1-|z|^{2})^{\alpha}\geq(1-|w|^{2})^{\alpha}%
\]
for every $z\in\mathbb{D}$ satisfying $\varphi(z)=w$. Consequently,
\[
N_{\varphi,\alpha}(w) =\sum_{\varphi(z)=w}(1-|z|^{2})^{\alpha}\geq
\sum_{\varphi(z)=w}(1-|w|^{2})^{\alpha}=n_{\varphi}(w)(1-|w|^{2})^{\alpha}.
\]
The proof is complete.
\end{proof}

\begin{proposition}
\label{prop2.2} Let $\alpha>-1$, and let $\varphi$ be a normalized
self-map of $\mathbb{D}$ in $\mathcal{D}_{\alpha}$. Then the following statements are equivalent:

\begin{enumerate}
[label=\textup{(\roman*)}]

\item $C_{\varphi}$ has closed range on $\mathcal{D}_{\alpha}$.

\item $\mu_{\varphi,\alpha}$ is a Reverse Carleson measure for $A_{\alpha}%
^{2}$.
\end{enumerate}
\end{proposition}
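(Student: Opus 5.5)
The plan is to combine three facts recorded in the Introduction: the reduction to $C_{\varphi,0}$, the change of variables formula, and the unitary differentiation map $D:\mathcal{D}_{\alpha,0}\to A_\alpha^2$, $Df=f'$.

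First, since $\varphi$ is nonconstant, $C_\varphi$ is injective. Hence, by the decomposition $C_\varphi=I_{\mathbb{C}}\oplus C_{\varphi,0}$, statement (i) holds if and only if $C_{\varphi,0}$ is bounded below on $\mathcal{D}_{\alpha,0}$. That is, there is $c>0$ such that $\|C_\varphi f\|_{\mathcal{D}_\alpha}^2\ge c\|f\|_{\mathcal{D}_\alpha}^2$ for all $f\in\mathcal{D}_{\alpha,0}$. Here we use the open mapping theorem: an injective bounded operator has closed range exactly when it is bounded below.

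Second, for $f\in\mathcal{D}_{\alpha,0}$ we have $f(0)=0$ and $\varphi(0)=0$. The change of variables formula then gives
\[
\|C_\varphi f\|_{\mathcal{D}_\alpha}^2=(1+\alpha)\int_{\mathbb{D}}|f'(w)|^2N_{\varphi,\alpha}(w)\,dA(w)=(1+\alpha)\int_{\mathbb{D}}|f'|^2\,d\mu_{\varphi,\alpha},
\]
and also $\|f\|_{\mathcal{D}_\alpha}^2=\|f'\|_{A_\alpha^2}^2$.

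Third, the map $f\mapsto f'$ is a bijection from $\mathcal{D}_{\alpha,0}$ onto $A_\alpha^2$. Given $g\in A_\alpha^2$, its primitive $f(z)=\int_0^z g$ lies in $\mathcal{D}_{\alpha,0}$ and satisfies $f'=g$. So the lower bound over $f\in\mathcal{D}_{\alpha,0}$ translates exactly into
\[
\int_{\mathbb{D}}|g|^2\,d\mu_{\varphi,\alpha}\ge \frac{c}{1+\alpha}\|g\|_{A_\alpha^2}^2\quad\text{for all } g\in A_\alpha^2,
\]
and conversely. This is statement (ii).

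The only point needing care is finiteness: the integrals must be meaningful for every $g$. This is guaranteed because $C_\varphi$ is bounded, which the paper notes makes both sides of the change of variables formula finite. The reverse inequality in (ii) requires no upper bound, so no further obstacle is expected; the argument is essentially bookkeeping.
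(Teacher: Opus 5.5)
Your proposal is correct and follows the paper's proof: reduce closed range to $C_{\varphi,0}$ being bounded below via $C_\varphi=I_{\mathbb{C}}\oplus C_{\varphi,0}$ and injectivity, then use the change of variables formula and the unitary map $f\mapsto f'$ from $\mathcal{D}_{\alpha,0}$ onto $A_\alpha^2$. The paper does not spell out the open mapping step or the primitive construction, but these are exactly the points its argument relies on.
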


\begin{proof}
Since
\[
C_{\varphi}= I_{\mathbb{C}}\oplus C_{\varphi,0},
\]
it is enough to determine when $C_{\varphi,0}$ is bounded below. For
$f\in\mathcal{D}_{\alpha,0}$, put $g=f^{\prime}$. Then
\[
\|f\|_{\mathcal{D}_{\alpha}}^{2} = \int_{\mathbb{D}}|g(w)|^{2}\,dA_{\alpha
}(w),
\]
whereas
\[
\|C_{\varphi,0}f\|_{\mathcal{D}_{\alpha}}^{2} = (1+\alpha) \int_{\mathbb{D}%
}|g(w)|^{2}\,d\mu_{\varphi,\alpha}(w).
\]
Since differentiation is unitary from $\mathcal{D}_{\alpha,0}$ onto
$A_{\alpha}^{2}$, the conclusion follows.
\end{proof}

We next show that closed range implies the Reverse Carleson Disk Condition.
\begin{lemma}
\label{lem2.3} Let $\alpha>-1$ and let $\varphi$ be a normalized self-map of $\mathbb{D}$ in $\mathcal{D}_{\alpha}$. If $\mu_{\varphi,\alpha}$ is a Reverse Carleson measure for $A_{\alpha}^{2}$, then $\mu_{\varphi,\alpha}$ satisfies the Reverse Carleson Disk Condition; that is, there exist constants $\delta>0$ and $r>0$ such that
\[
\mu_{\varphi,\alpha}(D(z,r))\geq\delta A_{\alpha}(D(z,r)),\qquad z\in\mathbb{D}.
\]
\end{lemma}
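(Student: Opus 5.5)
The plan is to test the Reverse Carleson inequality against normalized reproducing kernels of $A_{\alpha}^{2}$ and then localize. These kernels concentrate on Bergman disks. For $z\in\mathbb{D}$, put
\[
g_{z}(w)=\frac{(1-|z|^{2})^{(2+\alpha)/2}}{(1-\overline{z}w)^{2+\alpha}},
\]
so that $\|g_{z}\|_{A_{\alpha}^{2}}=1$. Proposition~\ref{prop2.2} and the hypothesis give a constant $c>0$ with
\[
\int_{\mathbb{D}}|g_{z}|^{2}\,d\mu_{\varphi,\alpha}\ge c
\qquad\text{for all } z\in\mathbb{D}.
\]
Boundedness of $C_{\varphi}$ means that $\mu_{\varphi,\alpha}$ is a Carleson measure for $A_{\alpha}^{2}$. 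The standard characterization then gives
\[
\mu_{\varphi,\alpha}(D(z,1))\le C\,A_{\alpha}(D(z,1)),
\qquad\text{equivalently}\qquad
\mu_{\varphi,\alpha}(D(z,1))\le C'(1-|z|^{2})^{2+\alpha}.
\]

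Next I split the integral into the part over $D(z,r)$ and the tail over $\mathbb{D}\setminus D(z,r)$. The aim is to show that the tail is at most $c/2$ once $r$ is large, uniformly in $z$.

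On $D(z,r)$ we have $|1-\overline{z}w|\asymp_{r}(1-|z|^{2})$. Hence the local part is at most
\[
C_{r}\,\frac{\mu_{\varphi,\alpha}(D(z,r))}{(1-|z|^{2})^{2+\alpha}}
\asymp_{r}\frac{\mu_{\varphi,\alpha}(D(z,r))}{A_{\alpha}(D(z,r))}.
\]
So once the tail is at most $c/2$, we get $\mu_{\varphi,\alpha}(D(z,r))\ge\delta A_{\alpha}(D(z,r))$ with $\delta=\delta(r)>0$, which is the Reverse Carleson Disk Condition.

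The main obstacle is the uniform tail estimate. I would cover $\mathbb{D}$ by a lattice of Bergman disks $D(a_{k},1)$ of bounded overlap. On each disk, $|g_{z}|^{2}$ is comparable to its value at $a_{k}$, namely
\[
\frac{(1-|z|^{2})^{2+\alpha}}{|1-\overline{z}a_{k}|^{4+2\alpha}}.
\]
The Carleson bound on each lattice disk then dominates the tail by
\[
\sum_{k:\,\beta(a_{k},z)>r-1}(1-|\sigma_{z}(a_{k})|^{2})^{2+\alpha}.
\]
By Möbius invariance this sum is comparable to
\[
\int_{\mathbb{D}\setminus D(z,r-2)}\frac{(1-|z|^{2})^{2+\alpha}(1-|w|^{2})^{\alpha}}{|1-\overline{z}w|^{4+2\alpha}}\,dA(w)
=\frac{1}{1+\alpha}\,A_{\alpha}\bigl(\mathbb{D}\setminus D_{\tanh(r-2)}(0)\bigr),
\]
after the change of variables $w=\sigma_{z}(u)$. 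This is a tail of a finite measure independent of $z$, so it tends to $0$ as $r\to\infty$. Choosing $r$ large enough makes the tail at most $c/2$ and completes the argument.

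The only care needed is that the comparability constants from the Carleson property are uniform. This holds because they depend only on the fixed hyperbolic radius $1$, and on $\alpha$ and $\|C_{\varphi}\|$.
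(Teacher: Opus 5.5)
Your proposal is correct and follows essentially the same route as the paper. Both arguments test the Reverse Carleson inequality on the normalized kernels $\kappa_z^{\alpha}$ and use the Carleson bound on Bergman disks that comes from boundedness of $C_{\varphi}$. A bounded-overlap lattice then dominates the tail over $\mathbb{D}\setminus D(z,r)$ by $(1-\tanh^{2}(r-2))^{\alpha+1}$ via the M\"obius change of variables, and the local part is compared with $\mu_{\varphi,\alpha}(D(z,r))/A_{\alpha}(D(z,r))$.

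The only difference is cosmetic. You control $|\kappa_z^{\alpha}|^{2}$ on each lattice disk through $|1-\overline{z}w|\asymp|1-\overline{z}a_k|$, whereas the paper uses the submean value property over the enlarged disks $D(a_k,4s)$. Also, only the upper bound in your ``comparable'' step is needed, and that is all bounded overlap gives.
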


\begin{proof}
By assumption, $\mu_{\varphi,\alpha}$ is a Reverse Carleson measure for $A_{\alpha}^{2}$. Hence there exists a constant $c_{0}>0$ such that
\begin{equation}
\int_{\mathbb{D}}|g(w)|^{2}\,d\mu_{\varphi,\alpha}(w) \geq c_{0}
\int_{\mathbb{D}}|g(w)|^{2}\,dA_{\alpha}(w), \qquad g\in A_{\alpha}^{2}.
\label{eq:2.1}
\end{equation}
Taking $g=\kappa_{z}^{\alpha}$, where
\[
\kappa_{z}^{\alpha}(w) =\frac{(1-|z|^{2})^{\frac{\alpha+2}{2}}} {(1-\overline
zw)^{\alpha+2}}
\]
is the normalized reproducing kernel of $A_{\alpha}^{2}$, we obtain
\begin{equation}
\int_{\mathbb{D}} |\kappa_{z}^{\alpha}(w)|^{2}\,d\mu_{\varphi,\alpha}(w) \geq
c_{0}, \qquad z\in\mathbb{D}. \label{eq:2.2}
\end{equation}

On the other hand, for $g\in A_{\alpha}^{2}$, define
\[
F(z)=\int_{0}^{z} g(\xi)\,d\xi.
\]
Then
\[
F^{\prime}(z)=g(z)\qquad\text{and}\qquad F \in\mathcal{D}_{\alpha,0}.
\]
By the change of variables formula,
\[
\|C_{\varphi}F\|_{\mathcal{D}_{\alpha}}^{2} = (1+\alpha) \int_{\mathbb{D}}
|g(w)|^{2}N_{\varphi,\alpha}(w)\,dA(w),
\]
while,
\[
\|F\|_{\mathcal{D}_{\alpha}}^{2} = \int_{\mathbb{D}}|g(w)|^{2}\,dA_{\alpha
}(w).
\]
Then the boundedness of $C_{\varphi}$ implies that
\begin{equation}
\int_{\mathbb{D}}|g(w)|^{2}\,d\mu_{\varphi,\alpha}(w) \lesssim\int
_{\mathbb{D}}|g(w)|^{2}\,dA_{\alpha}(w). \label{eq:2.3}
\end{equation}
This shows that $\mu_{\varphi,\alpha}$ is an $A_{\alpha}^{2}$-Carleson measure.

Fix $0 < s < 1$. We claim that
\begin{equation}
\mu_{\varphi,\alpha}(D(a,s)) \lesssim A_{\alpha}(D(a,s)), \qquad
a\in\mathbb{D}. \label{eq:2.4}
\end{equation}
Indeed, taking $g=\kappa_{a}^{\alpha}$ in \eqref{eq:2.3}, we obtain
\[
\int_{\mathbb{D}}|\kappa_{a}^{\alpha}(w)|^{2}\,d\mu_{\varphi,\alpha}(w)
\lesssim1.
\]
This implies that
\[
\int_{D(a,s)} |\kappa_{a}^{\alpha}(w)|^{2}\,d\mu_{\varphi,\alpha}(w)
\lesssim1.
\]
For $w\in D(a,s)$, we have
\[
|1-\overline aw| \asymp1-|a|^{2},
\]
and hence
\[
|\kappa_{a}^{\alpha}(w)|^{2} \asymp\frac{1}{(1-|a|^{2})^{\alpha+2}}
\asymp\frac{1}{A_{\alpha}(D(a,s))}.
\]
Consequently,
\[
1 \gtrsim\int_{D(a,s)} |\kappa_{a}^{\alpha}(w)|^{2}\,d\mu_{\varphi,\alpha}(w)
\gtrsim\frac{\mu_{\varphi,\alpha}(D(a,s))} {A_{\alpha}(D(a,s))},
\]
which proves \eqref{eq:2.4}.

Let $\{a_{k}\}$ be an $s$-lattice such that
\[
\mathbb{D}=\bigcup_{k}D(a_{k},s),
\]
By \cite[Theorem~2.23]{MR2115155}, there exists an integer $N$, independent of $w$, such that
\begin{equation}
\sum_{k}\chi_{D(a_{k},4s)}(w) \leq N, \qquad w\in\mathbb{D}. \label{eq:2.5}
\end{equation}
For $r>5s$, define
\[
E_{r}(z) = \mathbb{D}\setminus D(z,r)
\]
and
\[
J_{r}(z) = \left\{  k: D(a_{k},s)\cap E_{r}(z)\neq\emptyset\right\}  .
\]
Then
\begin{align*}
&  \int_{E_{r}(z)} |\kappa_{z}^{\alpha}(w)|^{2}\,d\mu_{\varphi,\alpha}(w)\\
&  \quad\leq\sum_{k\in J_{r}(z)} \int_{D(a_{k},s)} |\kappa_{z}^{\alpha
}(w)|^{2}\,d\mu_{\varphi,\alpha}(w)\\
&  \quad\leq\sum_{k\in J_{r}(z)} \mu_{\varphi,\alpha}(D(a_{k},s)) \sup_{w\in
D(a_{k},s)} |\kappa_{z}^{\alpha}(w)|^{2}.
\end{align*}

By the submean property for analytic functions, we have
\[
|f(w)|^{2} \lesssim\frac{1}{A_{\alpha}(D(w,s))} \int_{D(w,s)} |f(u)|^{2}%
\,dA_{\alpha}(u)\quad\text{for}\quad w \in D\left(  {{a_{k}},s} \right)  .
\]
Since
\[
D(w,s)\subset D(a_{k},4s)
\]
and
\[
A_{\alpha}(D(w,s)) \asymp A_{\alpha}(D(a_{k},s)), \qquad w\in D(a_{k},s),
\]
it follows that
\[
|f(w)|^{2} \lesssim\frac{1}{A_{\alpha}(D(a_{k},s))} \int_{D(a_{k},4s)}
|f(u)|^{2}\,dA_{\alpha}(u).
\]
Applying this to $f=\kappa_{z}^{\alpha}$ and taking the supremum over $w\in
D(a_{k},s)$, we obtain
\begin{equation}
\sup_{w\in D(a_{k},s)} |\kappa_{z}^{\alpha}(w)|^{2} \lesssim\frac{1}%
{A_{\alpha}(D(a_{k},s))} \int_{D(a_{k},4s)} |\kappa_{z}^{\alpha}%
(u)|^{2}\,dA_{\alpha}(u). \label{eq:2.6}
\end{equation}
Combining \eqref{eq:2.4} and \eqref{eq:2.6}, we get
\begin{equation}
\int_{E_{r}(z)} |\kappa_{z}^{\alpha}(w)|^{2}\,d\mu_{\varphi,\alpha}(w)
\lesssim\sum_{k\in J_{r}(z)} \int_{D(a_{k},4s)} |\kappa_{z}^{\alpha}%
(u)|^{2}\,dA_{\alpha}(u). \label{eq:2.7}
\end{equation}
For each $k\in J_{r}(z)$, choose
\[
\xi_{k} \in D(a_{k},s)\cap\bigl(\mathbb{D}\setminus D(z,r)\bigr).
\]
Then $\beta(a_{k},\xi_{k})<s $ and $\beta(z,\xi_{k})\geq r. $ If $u\in
D(a_{k},4s)$, then
\[
\beta(u,a_{k})<4s,
\]
and hence
\[
\beta(u,\xi_{k}) \leq\beta(u,a_{k})+\beta(a_{k},\xi_{k}) < 4s+s = 5s.
\]
By the reverse triangle inequality,
\[
\beta(z,u) \geq\beta(z,\xi_{k})-\beta(\xi_{k},u) > r-5s.
\]
Therefore,
\begin{equation}
D(a_{k},4s) \subset E_{r-5s}(z), \qquad k\in J_{r}(z). \label{eq:2.8}
\end{equation}
Combining \eqref{eq:2.5}, \eqref{eq:2.7}, and \eqref{eq:2.8}, we obtain
\begin{align}
\int_{E_{r}(z)} |\kappa_{z}^{\alpha}(w)|^{2}\,d\mu_{\varphi,\alpha}(w)  &
\lesssim\sum_{k\in J_{r}(z)} \int_{D(a_{k},4s)} |\kappa_{z}^{\alpha}%
(u)|^{2}\,dA_{\alpha}(u)\notag\\
&  =\int_{\mathbb{D}} |\kappa_{z}^{\alpha}(u)|^{2} \sum_{k\in J_{r}(z)}
\chi_{D(a_{k},4s)}(u)\,dA_{\alpha}(u)\notag\\
&  =\int_{E_{r-5s}(z)} |\kappa_{z}^{\alpha}(u)|^{2} \sum_{k\in J_{r}(z)}
\chi_{D(a_{k},4s)}(u)\,dA_{\alpha}(u)\notag\\
&  \leq N\int_{E_{r-5s}(z)} |\kappa_{z}^{\alpha}(u)|^{2}\,dA_{\alpha}(u).
\label{eq:2.9}%
\end{align}

We now compute the integral on the right-hand side. Set
\[
R_{1}=\tanh(r-5s),
\]
then
\[
D(z,r-5s)=D_{R_{1}}(z).
\]

Let
\[
\sigma_{z}(u) = \frac{z-u}{1-\overline zu}.
\]
Then
\[
|\kappa_{z}^{\alpha}(\sigma_{z}(u))|^{2} \,dA_{\alpha}(\sigma_{z}(u)) =
dA_{\alpha}(u).
\]
Therefore,
\[
\int_{D_{R_{1}}(z)} |\kappa_{z}^{\alpha}(w)|^{2}\,dA_{\alpha}(w) =
\int_{|u|<R_{1}}dA_{\alpha}(u) = 1-(1-R_{1}^{2})^{\alpha+1}.
\]
It follows that
\begin{equation}
\int_{\mathbb{D}\setminus D_{R_{1}}(z)} |\kappa_{z}^{\alpha}(w)|^{2}%
\,dA_{\alpha}(w) = (1-R_{1}^{2})^{\alpha+1}. \label{eq:2.10}
\end{equation}
Combining \eqref{eq:2.9} and \eqref{eq:2.10}, we get
\begin{equation}
\int_{E_{r}(z)} |\kappa_{z}^{\alpha}(w)|^{2} d\mu_{\varphi,\alpha}(w)
\lesssim(1-R_{1}^{2})^{\alpha+1}. \label{eq:2.11}
\end{equation}
Since $\alpha>-1$, the right-hand side tends to zero as $r\to\infty$.
Therefore,
\[
\lim_{r\to+\infty} \sup_{z\in\mathbb{D}} \int_{E_{r}(z)} |\kappa_{z}^{\alpha
}(w)|^{2} d\mu_{\varphi,\alpha}(w) = 0.
\]
Therefore, we may choose $r>5s$ sufficiently large such that
\begin{equation}
\int_{E_{r}(z)} |\kappa_{z}^{\alpha}(w)|^{2}\,d\mu_{\varphi,\alpha}(w)
\leq\frac{c_{0}}{2}, \qquad z\in\mathbb{D}. \label{eq:2.12}
\end{equation}

Combining \eqref{eq:2.2} and \eqref{eq:2.12}, we obtain
\begin{align*}
\int_{D(z,r)} |\kappa_{z}^{\alpha}(w)|^{2} d\mu_{\varphi,\alpha}(w)  &  =
\int_{\mathbb{D}} |\kappa_{z}^{\alpha}(w)|^{2} d\mu_{\varphi,\alpha}(w)-
\int_{E_{r}(z)} |\kappa_{z}^{\alpha}(w)|^{2} d\mu_{\varphi,\alpha}(w)\\
&  \geq\frac{c_{0}}{2}.
\end{align*}
Therefore,
\begin{align*}
\frac{c_{0}}{2}  &  \leq\int_{D(z,r)} |\kappa_{z}^{\alpha}(w)|^{2}%
\,d\mu_{\varphi,\alpha}(w)\lesssim\frac{\mu_{\varphi,\alpha}(D(z,r))}
{A_{\alpha}(D(z,r))}%
\end{align*}
by the equivalence
\[
|\kappa_{z}^{\alpha}(w)|^{2} \asymp\frac{1}{(1-|z|^{2})^{\alpha+2}}
\asymp\frac{1}{A_{\alpha}(D(z,r))},\qquad w\in D(z,r).
\]
Consequently, there exists a constant $\delta>0$, independent of $z$, such
that
\[
\mu_{\varphi,\alpha}(D(z,r)) \geq\delta A_{\alpha}(D(z,r)), \qquad
z\in\mathbb{D}.
\]
The proof is complete.
\end{proof}

\begin{theorem}
\label{thm2.4} Let $\alpha>-1$ and let $\varphi$ be a normalized self-map of $\mathbb{D}$ in $\mathcal{D}_{\alpha}$. If the range of $C_{\varphi}$ is closed in $\mathcal{D}_{\alpha}$,
then $\mu_{\varphi,\alpha}$ satisfies the Reverse Carleson Disk Condition.
\end{theorem}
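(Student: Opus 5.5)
The statement follows by chaining two results already established, Proposition~\ref{prop2.2} and Lemma~\ref{lem2.3}; I do not expect to need any new estimate.

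Since $\varphi$ is a normalized self-map of $\mathbb{D}$ in $\mathcal{D}_{\alpha}$, we have $\varphi(0)=0$ and $C_{\varphi}$ is bounded. These are exactly the standing hypotheses of both Proposition~\ref{prop2.2} and Lemma~\ref{lem2.3}.

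First, I assume the range of $C_{\varphi}$ is closed in $\mathcal{D}_{\alpha}$. The implication (i)$\Rightarrow$(ii) of Proposition~\ref{prop2.2} then says that $\mu_{\varphi,\alpha}$ is a Reverse Carleson measure for $A_{\alpha}^{2}$. Recall how that implication works. $C_{\varphi}$ is injective and has closed range, so it is bounded below, and therefore so is $C_{\varphi,0}$ on $\mathcal{D}_{\alpha,0}$. Writing $g=f'$, the change of variables formula turns the lower bound $\|C_{\varphi,0}f\|_{\mathcal{D}_\alpha}\ge c\|f\|_{\mathcal{D}_\alpha}$ into
\[
(1+\alpha)\int_{\mathbb{D}}|g|^{2}\,d\mu_{\varphi,\alpha}\ \ge\ c^{2}\int_{\mathbb{D}}|g|^{2}\,dA_{\alpha},
\]
valid for all $g\in A^{2}_{\alpha}$, because differentiation maps $\mathcal{D}_{\alpha,0}$ unitarily onto $A_{\alpha}^{2}$.

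Second, I apply Lemma~\ref{lem2.3} to this Reverse Carleson measure. It gives constants $\delta>0$ and $r>0$ with $\mu_{\varphi,\alpha}(D(z,r))\ge\delta A_{\alpha}(D(z,r))$ for all $z\in\mathbb{D}$, which is the Reverse Carleson Disk Condition.

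The only point that needs checking is that the hypotheses match. In particular, Lemma~\ref{lem2.3} uses the boundedness of $C_{\varphi}$ to obtain the upper Carleson estimate \eqref{eq:2.4} and the tail bound \eqref{eq:2.12}, and boundedness is part of the definition of a normalized self-map. No further obstacle is expected.
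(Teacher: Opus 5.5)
Your proposal is correct and is exactly the paper's proof: Proposition~\ref{prop2.2} gives that $\mu_{\varphi,\alpha}$ is a Reverse Carleson measure for $A_{\alpha}^{2}$, and Lemma~\ref{lem2.3} then gives the Reverse Carleson Disk Condition. Your check of the hypotheses is also accurate, since the boundedness of $C_{\varphi}$ (part of being a normalized self-map) is what supplies the Carleson upper estimate used in Lemma~\ref{lem2.3}.
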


\begin{proof}
By Proposition~\ref{prop2.2}, $\mu_{\varphi,\alpha}$ is a Reverse Carleson measure for $A_{\alpha}^{2}$. The conclusion follows from Lemma~\ref{lem2.3}.
\end{proof}

Theorem~\ref{thm2.4} shows that closed range implies the Reverse Carleson Disk
Condition for $\mu_{\varphi,\alpha}$. In the following lemmas, we use this
condition as a starting point to derive geometric and measure theoretic consequences.

\begin{lemma}
\label{lem2.5} Let $\alpha>-1$, $r>0$, and let $G$ be a Borel subset of
$\mathbb{D}$. Then the following two conditions are equivalent:
\begin{equation}
A_{\alpha}(G\cap D(z,r))\geq\delta A_{\alpha}(D(z,r)), \qquad z\in
\mathbb{D},\label{eq:2.13}
\end{equation}
and
\begin{equation}
A(G\cap D(z,r))\geq\delta_{1} A(D(z,r)), \qquad z\in\mathbb{D},\label{eq:2.14}
\end{equation}
where $\delta,\delta_{1}>0$ are constants independent of $z$.
\end{lemma}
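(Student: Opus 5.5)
The plan is to compare the weight $(1-|w|^{2})^{\alpha}$ with the constant $(1-|z|^{2})^{\alpha}$ on the fixed Bergman disk $D(z,r)$. Both measures, restricted to $D(z,r)$, differ only by a density that is essentially constant there. This is the only input needed.

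\emph{Step 1.} First I will record the standard fact that for fixed $r>0$ there is a constant $C_{r}\geq1$, depending only on $r$, such that
\[
C_{r}^{-1}(1-|z|^{2})\leq 1-|w|^{2}\leq C_{r}(1-|z|^{2}),\qquad w\in D(z,r),\ z\in\mathbb{D}.
\]
This follows from the identity $1-\rho(z,w)^{2}=(1-|z|^{2})(1-|w|^{2})/|1-\overline{z}w|^{2}$ together with $|1-\overline{z}w|\asymp 1-|z|^{2}$ for $w\in D(z,r)$. The latter comparability was already used in the proof of Lemma~\ref{lem2.3}. Raising to the power $\alpha$, which may be negative, gives
\[
c_{r,\alpha}^{-1}(1-|z|^{2})^{\alpha}\leq(1-|w|^{2})^{\alpha}\leq c_{r,\alpha}(1-|z|^{2})^{\alpha}
\]
on $D(z,r)$, with $c_{r,\alpha}=C_{r}^{|\alpha|}$. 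The sign of $\alpha$ only swaps which side of the comparison each bound comes from.

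\emph{Step 2.} Next I will integrate over an arbitrary Borel set $F\subset D(z,r)$. This gives
\[
c_{r,\alpha}^{-1}(1+\alpha)(1-|z|^{2})^{\alpha}A(F)\leq A_{\alpha}(F)\leq c_{r,\alpha}(1+\alpha)(1-|z|^{2})^{\alpha}A(F).
\]
I will apply this with $F=G\cap D(z,r)$ and with $F=D(z,r)$, and then take the quotient. The factor $(1+\alpha)(1-|z|^{2})^{\alpha}$ cancels, and
\[
c_{r,\alpha}^{-2}\,\frac{A(G\cap D(z,r))}{A(D(z,r))}\leq\frac{A_{\alpha}(G\cap D(z,r))}{A_{\alpha}(D(z,r))}\leq c_{r,\alpha}^{2}\,\frac{A(G\cap D(z,r))}{A(D(z,r))}.
\]
Hence \eqref{eq:2.13} implies \eqref{eq:2.14} with $\delta_{1}=c_{r,\alpha}^{-2}\delta$. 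Conversely, \eqref{eq:2.14} implies \eqref{eq:2.13} with $\delta=c_{r,\alpha}^{-2}\delta_{1}$. The same $r$ is used throughout, and both constants are independent of $z$.

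The only step needing care is Step 1, namely the uniformity of $C_{r}$ in $z$. If $D(z,r)$ is taken to be the pseudo-hyperbolic disk $D_{\eta}(z)$ with $\eta=\tanh r<1$, then $|1-\overline{z}w|\geq 1-|z|$ and $|1-\overline{z}w|\leq(1-|z|^{2})(1+\eta)/(1-\eta)$ give explicit constants. I expect no genuine obstacle here.
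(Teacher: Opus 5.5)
Your proposal is correct and follows essentially the same route as the paper. You show $(1-|w|^{2})^{\alpha}\asymp(1-|z|^{2})^{\alpha}$ on $D(z,r)$, deduce $A_{\alpha}(E)\asymp(1-|z|^{2})^{\alpha}A(E)$ for Borel $E\subset D(z,r)$, apply this to $E=G\cap D(z,r)$ and $E=D(z,r)$, and take the quotient; the only difference is that you make the constants explicit ($c_{r,\alpha}^{\pm2}$) where the paper writes $\asymp$.
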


\begin{proof}
Since $r>0$ is fixed, for every $w\in D(z,r)$ we have
\[
1-|w|^{2}\asymp1-|z|^{2},
\]
where the constants depend only on $r$. It follows that
\[
(1-|w|^{2})^{\alpha}\asymp(1-|z|^{2})^{\alpha}, \qquad w\in D(z,r).
\]
Therefore, for every measurable set $E\subset D(z,r)$,
\[
A_{\alpha}(E) = (1+\alpha)\int_{E} (1-|w|^{2})^{\alpha}\,dA(w) \asymp
(1-|z|^{2})^{\alpha}A(E).
\]
Applying this to $E = G \cap D\left(  {z,r} \right)  $ and $E = D\left(  {z,r}
\right)  $, we obtain
\[
A_{\alpha}(G\cap D(z,r)) \asymp(1-|z|^{2})^{\alpha}A(G\cap D(z,r)),
\]
and
\[
A_{\alpha}(D(z,r)) \asymp(1-|z|^{2})^{\alpha}A(D(z,r)).
\]
Consequently,
\[
\frac{{{A_{\alpha}}\left(  {G \cap D\left(  {z,r} \right)  } \right)  }%
}{{{A_{\alpha}}\left(  {D\left(  {z,r} \right)  } \right)  }} \asymp
\frac{{A\left(  {G \cap D\left(  {z,r} \right)  } \right)  }}{{A\left(
{D\left(  {z,r} \right)  } \right)  }},\qquad z \in\mathbb{D}%
\]
Thus, the condition \eqref{eq:2.13} holds if and only if the
condition \eqref{eq:2.14} holds, with comparable constants.

The proof is complete.
\end{proof}

\begin{theorem}
\label{thm2.6} Let $-1<\alpha\leq0$, and let $\varphi$ be a normalized
self-map of $\mathbb{D}$ in $\mathcal{D}_{\alpha}$. If
$C_{\varphi}$ has closed range, then
\[
\mathbb{T}\subset\overline{\varphi(\mathbb{D})}.
\]
Equivalently, for every $\zeta\in\mathbb{T}$ and every $r>0$,
\[
S(\zeta,r)\cap\varphi(\mathbb{D})\ne\emptyset, \qquad S(\zeta,r)=\{z\in
\mathbb{D}:|z-\zeta|<r\}.
\]

\end{theorem}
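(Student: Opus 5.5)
We argue by contradiction, starting from Theorem~\ref{thm2.4}. Since $C_{\varphi}$ has closed range, $\mu_{\varphi,\alpha}$ satisfies the Reverse Carleson Disk Condition. So there are $\delta>0$ and $r>0$ with
\[
\mu_{\varphi,\alpha}(D(z,r))\ge\delta A_{\alpha}(D(z,r)),\qquad z\in\mathbb{D}.
\]
Note that $\mu_{\varphi,\alpha}$ is supported on $\varphi(\mathbb{D})$, because $N_{\varphi,\alpha}=0$ off the image. Hence every Bergman disk $D(z,r)$ must meet $\varphi(\mathbb{D})$ in a set of positive area.

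Suppose now that $\mathbb{T}\not\subset\overline{\varphi(\mathbb{D})}$. Then there exist $\zeta\in\mathbb{T}$ and $\epsilon>0$ with $S(\zeta,\epsilon)\cap\varphi(\mathbb{D})=\emptyset$. Fix this $r$ and choose points $z=t\zeta$ with $t\to1^-$. The Euclidean diameter of $D(t\zeta,r)$ is comparable to $1-t$, and the disk lies within Euclidean distance $C_r(1-t)$ of $t\zeta$. Therefore, for $t$ close enough to $1$, we get $D(t\zeta,r)\subset S(\zeta,\epsilon)$. For such $t$,
\[
\mu_{\varphi,\alpha}(D(t\zeta,r))=\int_{D(t\zeta,r)}N_{\varphi,\alpha}\,dA=0,
\]
while $A_{\alpha}(D(t\zeta,r))>0$. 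This contradicts the Reverse Carleson Disk Condition, so $\mathbb{T}\subset\overline{\varphi(\mathbb{D})}$.

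The equivalent formulation follows at once. A point $\zeta\in\mathbb{T}$ lies in the closure of $\varphi(\mathbb{D})$ exactly when every Euclidean neighborhood of $\zeta$ meets $\varphi(\mathbb{D})$. Since $\varphi(\mathbb{D})\subset\mathbb{D}$, this is the same as saying every $S(\zeta,r)$ meets $\varphi(\mathbb{D})$.

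The only step needing care is the containment $D(t\zeta,r)\subset S(\zeta,\epsilon)$. It follows from the explicit description of $D_\eta(z)$ as a Euclidean disk with center $\frac{(1-\eta^2)z}{1-\eta^2|z|^2}$ and radius $\frac{\eta(1-|z|^2)}{1-\eta^2|z|^2}$, both of which tend to $\zeta$ and to $0$ respectively as $z=t\zeta\to\zeta$. The argument does not use the restriction $\alpha\le 0$, and it also does not need Lemma~\ref{lem2.1} or Lemma~\ref{lem2.5}.
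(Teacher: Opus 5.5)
Your proof is correct, but it takes a different route from the paper's.

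\textbf{Your route.} You derive the boundary inclusion from Theorem~\ref{thm2.4}. The Reverse Carleson Disk Condition forces every Bergman disk $D(z,r)$ of the fixed radius to meet $\varphi(\mathbb{D})$ in a set of positive area. You then use the fact that the disks $D(t\zeta,r)$ shrink in the Euclidean sense to $\zeta$ as $t\to1^-$. This is exactly the argument the paper uses later, in the proof of Theorem~\ref{thm2.14}, for \textup{(ii)}$\Rightarrow\mathbb{T}\subset\overline{\varphi(\mathbb{D})}$.

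\textbf{The paper's route.} The paper's proof of Theorem~\ref{thm2.6} works directly with the lower bound $\|C_\varphi f\|\geq c\|f\|$. It tests on $F_k=p_\zeta^k-p_\zeta(0)^k$, built from the peaking function $p_\zeta(z)=(1+\overline{\zeta}z)/2$. Let $q<1$ be the supremum of $|p_\zeta|$ over the image. The paper bounds $\|C_\varphi F_k\|^2\leq\frac{1+\alpha}{4}k^2q^{2k-2}\int N_{\varphi,\alpha}\,dA$. It then uses $\|F_k\|^{1/k}\to1$ to show that $\|C_\varphi F_k\|/\|F_k\|\to0$.

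\textbf{Trade-off.} The paper's argument is self-contained. It needs only boundedness below and the finiteness of $\int N_{\varphi,\alpha}\,dA$, which comes from applying $C_\varphi$ to the identity. Yours relies on the kernel-localization machinery behind Lemma~\ref{lem2.3}: the lattice covering and the tail estimate for $\kappa_z^\alpha$. In exchange, your proof is shorter given what is already established. It also yields a quantitatively stronger intermediate fact: the image meets every hyperbolic disk of a fixed radius in positive area, rather than merely accumulating at each boundary point.

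As you observe, your argument does not use $\alpha\leq0$. The paper's peaking-function argument does not use it either.
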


\begin{proof}
Since $C_{\varphi}$ is injective and has closed range, there exists $c>0$ such
that
\[
\|C_{\varphi}f\|_{\mathcal{D}_{\alpha}} \geq c\|f\|_{\mathcal{D}_{\alpha}},
\qquad f\in\mathcal{D}_{\alpha}.
\]
Suppose, to the contrary, that there are $\zeta\in\mathbb{T}$ and an open
neighborhood $U$ of $\zeta$ such that $U\cap\varphi(\mathbb{D})=\emptyset$.
Put
\[
p_{\zeta}(z)=\frac{1+\overline\zeta z}{2}, \qquad F_{k}(z)=p_{\zeta}%
(z)^{k}-p_{\zeta}(0)^{k}.
\]
Then $F_{k}\in\mathcal{D}_{\alpha,0}$ and
\[
F_{k}^{\prime}(z)=k p_{\zeta}(z)^{k-1}p_{\zeta}^{\prime}(z), \qquad|p_{\zeta
}^{\prime}(z)|=\frac12.
\]
The function $p_{\zeta}$ peaks at $\zeta$; hence
\[
q:=\sup_{w\in\varphi(\mathbb{D})}|p_{\zeta}(w)| \leq\max_{w\in\overline
{\mathbb{D}}\setminus U}|p_{\zeta}(w)| <1.
\]
Moreover,
\[
\lim_{k\to\infty} \left(  \int_{\mathbb{D}}|F_{k}^{\prime}(z)|^{2}\,dA_{\alpha}(z)
\right)  ^{1/(2k)} =1.
\]
Indeed, the upper bound follows from $|p_{\zeta}|\leq1$, while the lower bound
follows by integrating over the nonempty open set $\{z:|p_{\zeta
}(z)|>1-\varepsilon\}$ and then letting $\varepsilon\downarrow0$.

Since $C_{\varphi}$ is bounded, applying it to the identity function shows
that
\[
\int_{\varphi(\mathbb{D})}N_{\varphi,\alpha}(w)\,dA(w)<\infty.
\]
Using the change of variables formula and $\varphi(0)=0$, we obtain
\begin{align*}
\|C_{\varphi}F_{k}\|_{\mathcal{D}_{\alpha}}^{2}  &  =(1+\alpha) \int
_{\varphi(\mathbb{D})} |F_{k}^{\prime}(w)|^{2}N_{\varphi,\alpha}(w)\,dA(w)\\
&  \leq\frac{1+\alpha}{4}k^{2}q^{2k-2} \int_{\varphi(\mathbb{D})}%
N_{\varphi,\alpha}(w)\,dA(w).
\end{align*}
It follows that
\[
\frac{\|C_{\varphi}F_{k}\|_{\mathcal{D}_{\alpha}}} {\|F_{k}\|_{\mathcal{D}%
_{\alpha}}} \longrightarrow0,
\]
contradicting the lower bound for $C_{\varphi}$. Therefore every neighborhood
of every point of $\mathbb{T}$ meets $\varphi(\mathbb{D})$, and the conclusion follows.
\end{proof}

Theorem~\ref{thm2.6} gives a necessary boundary condition for closed
range. We next show that this condition is sufficient when the image
is obtained by removing a compact subset from a simply connected domain.
\subsection{The Compact-Hole Condition and closed range of $C_{\varphi}$}\mbox{}

We assume that $\varphi(\mathbb{D})=U\setminus K$, where $U$ is simply
connected and $K$ is a compact subset of $U$. We show that boundedness
of $C_{\varphi}$ passes to the composition operator induced by a
conformal map onto $U$. The boundary argument of \cite{MR1459789} then
turns contact with the entire unit circle into the inclusion of an
actual outer annulus in $\varphi(\mathbb{D})$.

This gives the five equivalent conditions in Theorem~\ref{thm2.14}.
The argument uses the compactness of $K$, without any restriction on
its number of connected components. The examples below show that the
geometric assumption does not force closed range and that boundedness
is essential for the annular conclusion.

\begin{definition}[Compact-Hole Condition]
\label{def2.7}
A domain $\Omega\subseteq\mathbb{D}$ satisfies the Compact-Hole
Condition if there exist a simply connected domain
$U\subseteq\mathbb{D}$ and a compact subset $K$ of $U$ such that
\[
\Omega=U\setminus K.
\]
We write $K\Subset U$ to indicate that $K$ is a compact subset of $U$.
The set $K$ may be empty and need not have finitely many connected
components.
\end{definition}

Every finitely connected domain satisfies the Compact-Hole Condition.
Indeed, if $\widehat{\mathbb{C}}\setminus\Omega$ has finitely many
connected components $K_0,K_1,\ldots,K_m$, let $K_0$ be the outer
complementary component containing $\infty$, and set
\[
U=\widehat{\mathbb{C}}\setminus K_0,
\qquad K=\bigcup_{j=1}^{m}K_j.
\]
Then $U$ is the simply connected domain obtained by filling the bounded
complementary components of $\Omega$. Each $K_j$, $1\leq j\leq m$,
is compact and contained in $U$, so $K\Subset U$ and $\Omega=U\setminus K$.
In particular, a simply connected domain satisfies the condition with
$U=\Omega$ and $K=\emptyset$.

We first record an estimate that will allow us to recover the full
weighted Bergman norm from integration over an outer annulus.
\begin{lemma}
\label{lem2.8} Let $\alpha>-1$ and $0<r<1$. For every analytic function
$g$ on $\mathbb{D}$,
\[
\int_{\mathbb{D}\setminus\overline{\mathbb{D}_{r}}}|g(w)|^{2}
\,dA_{\alpha}(w)
\geq (1-r^{2})^{\alpha+1}
\int_{\mathbb{D}}|g(w)|^{2}\,dA_{\alpha}(w).
\]
The integrals are understood in $[0,+\infty]$.
\end{lemma}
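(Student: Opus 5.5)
Use polar coordinates and monotonicity of integral means. Write
\[
M_2(\rho,g)^2=\frac{1}{2\pi}\int_0^{2\pi}|g(\rho e^{i\theta})|^2\,d\theta ,
\]
which is nondecreasing in $\rho\in[0,1)$ because $|g|^2$ is subharmonic. Since $dA=\frac1\pi\rho\,d\rho\,d\theta$, we have
\[
\int_{\mathbb{D}}|g|^2\,dA_\alpha=\int_0^1 M_2(\rho,g)^2\,d\lambda(\rho),\qquad
d\lambda(\rho)=2(1+\alpha)(1-\rho^2)^\alpha\rho\,d\rho .
\]
Here $\lambda$ is a probability measure on $[0,1)$, and $\lambda([r,1))=(1-r^2)^{\alpha+1}$. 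So with $m(\rho)=M_2(\rho,g)^2$ and $\tau=\lambda([r,1))$, the claim is
\[
\int_r^1 m\,d\lambda\ \ge\ \tau\int_0^1 m\,d\lambda .
\]
The circle $|w|=r$ has measure zero, so $\overline{\mathbb{D}_r}$ may be replaced by $\mathbb{D}_r$. All quantities are nonnegative, so Tonelli justifies everything in $[0,+\infty]$.

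The key step is a Chebyshev-type inequality for the nondecreasing function $m$. Split $\int_0^1 m\,d\lambda=I_1+I_2$ with $I_1=\int_0^r m\,d\lambda$ and $I_2=\int_r^1 m\,d\lambda$. Monotonicity gives the two bounds
\[
I_1\le m(r)(1-\tau),\qquad I_2\ge m(r)\tau .
\]
Then
\[
\tau(I_1+I_2)\le \tau(1-\tau)m(r)+\tau I_2\le (1-\tau)I_2+\tau I_2=I_2 ,
\]
which is exactly the claim.

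If $I_2=+\infty$ the inequality is trivial. Otherwise $m(r)\le I_2/\tau<\infty$, so every step above is a legitimate comparison of finite numbers. No step here is a genuine obstacle; the only points needing care are this infinite case and checking the closed form for $\lambda([r,1))$, which follows from substituting $u=\rho^2$.
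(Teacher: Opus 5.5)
Your proof is correct and follows essentially the same route as the paper: monotonicity of the integral means $m(\rho)$, the weighted masses $1-q$ and $q=(1-r^{2})^{\alpha+1}$ of $\mathbb{D}_r$ and the outer annulus, and comparing both pieces with $m(r)$ to get $I_1\le\frac{1-q}{q}I_2$, with the infinite case handled trivially. Your $\tau(I_1+I_2)\le I_2$ computation is just the paper's rearrangement written out.
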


\begin{proof}
Set
\[
m_g(t)=\frac{1}{2\pi}\int_{0}^{2\pi}|g(te^{i\theta})|^{2}\,d\theta,
\qquad q=(1-r^{2})^{\alpha+1}.
\]
The function $m_g$ is nondecreasing. Since the weighted areas of
$\mathbb{D}_{r}$ and $\mathbb{D}\setminus\overline{\mathbb{D}_{r}}$
are $1-q$ and $q$, respectively, radial integration gives
\[
\int_{\mathbb{D}_{r}}|g|^{2}\,dA_{\alpha}
\leq (1-q)m_g(r)
\leq \frac{1-q}{q}
\int_{\mathbb{D}\setminus\overline{\mathbb{D}_{r}}}|g|^{2}\,dA_{\alpha}.
\]
If the last integral is finite, rearranging proves the assertion.
If it is infinite, the asserted inequality holds in the extended sense.
\end{proof}

The following consequence of the boundary argument in
\cite{MR1459789} is the analytic ingredient needed for the filling
construction. We include the argument to make clear that, once the
boundary inclusion is known, closed range is not an additional hypothesis.

\begin{lemma}
\label{lem2.9} Let $-1<\alpha<0$, and let $\psi$ be a univalent
analytic self-map of $\mathbb{D}$. Suppose that
$C_{\psi}$ is bounded on $\mathcal{D}_{\alpha}$ and
\[
\mathbb{T}\subset\overline{\psi(\mathbb{D})}.
\]
Then $\psi$ is an automorphism of $\mathbb{D}$.
\end{lemma}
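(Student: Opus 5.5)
The plan is to argue by contradiction. Suppose $\Omega:=\psi(\mathbb{D})\neq\mathbb{D}$. Since $\psi$ is univalent, $\Omega$ is simply connected and $N_{\psi,\alpha}(w)=(1-|\psi^{-1}(w)|^{2})^{\alpha}\chi_{\Omega}(w)$. I will show that the two hypotheses force a boundary point where this density is too large for the Carleson estimate \eqref{eq:2.4}. That estimate holds here because its proof in Lemma~\ref{lem2.3} uses only boundedness of $C_\psi$, and it must hold at every point of the disk.

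\emph{Step 1: continuity on the closed disk.} Since $C_\psi$ is bounded, $\psi=C_\psi(\mathrm{id})\in\mathcal{D}_\alpha$. For $-1<\alpha<0$ we have $\|f\|_{\mathcal{D}_\alpha}^2\asymp\sum_n (n+1)^{1-\alpha}|\hat f(n)|^2$ with $1-\alpha>1$. Cauchy--Schwarz then gives $\sum_n|\hat f(n)|<\infty$, so $\mathcal{D}_\alpha$ lies in the disk algebra. Hence $\psi$ extends continuously to $\overline{\mathbb{D}}$, $\partial\Omega=\psi(\mathbb{T})$, and the hypothesis $\mathbb{T}\subset\overline{\Omega}$ gives $\mathbb{T}\subset\psi(\mathbb{T})$.

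\emph{Step 2: locating a prong.} If $\Omega\neq\mathbb{D}$, pick a point of $\mathbb{D}\setminus\Omega$ and let $F$ be its connected component in $\mathbb{D}\setminus\Omega$. Because $\Omega$ is simply connected, $F$ cannot be a compact subset of $\mathbb{D}$, so $\overline{F}$ meets $\mathbb{T}$ at some point $\zeta$. Near $\zeta$, then, $\Omega$ omits a continuum that reaches $\zeta$, while $\Omega$ still accumulates at every point of $\mathbb{T}$ near $\zeta$. The model case is $\mathbb{D}$ minus a radial slit ending at $1$.

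\emph{Step 3: comparing depths.} I will compare $1-|z|$ with $1-|w|$ for $w=\psi(z)\in\Omega$ near $\zeta$, on the side of the prong. In the slit model the local conformal picture is a half-plane mapped onto a quadrant by a square root. A Carleson box of size $h$ at $\zeta$ then pulls back to preimages with $1-|z|\lesssim h^{2}$. This gives $\mu_{\psi,\alpha}(\text{box})\gtrsim h^{2\alpha+2}\gg h^{\alpha+2}$ because $\alpha<0$, contradicting \eqref{eq:2.4} (via a covering of the box by Bergman disks) or, equivalently, $\int|\kappa_a^\alpha|^2\,d\mu_{\psi,\alpha}\lesssim1$ for $a\to\zeta$. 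For a general prong I would make this quantitative with harmonic measure. The Beurling projection theorem, or an extremal-length estimate, applied to the omitted continuum shows that the harmonic measure in $\Omega$ of the arc near $\zeta$ decays faster than in $\mathbb{D}$. Conformal invariance transfers this to the inequality $1-|\psi^{-1}(w)|\le C(1-|w|)^{1+\epsilon}$ on a set of $w$ of proportional area in a sequence of Bergman disks $D(a_j,s)$ with $a_j\to\zeta$. Then $N_{\psi,\alpha}\ge(1-|w|)^{\alpha(1+\epsilon)}$ there, so $\mu_{\psi,\alpha}(D(a_j,s))/A_\alpha(D(a_j,s))\to\infty$. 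This contradicts \eqref{eq:2.4}, so $\Omega=\mathbb{D}$, and a univalent self-map of $\mathbb{D}$ onto $\mathbb{D}$ is an automorphism.

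The main obstacle is Step 3 in full generality. The prong need not be a smooth slit, and $\mathbb{T}\subset\overline{\Omega}$ only says $\Omega$ approaches $\zeta$ from both sides. The Bergman disks must therefore be chosen inside $\Omega$ on one side of $F$ and adjacent to $\mathbb{T}$, and the harmonic-measure bound must be uniform on a positive proportion of each disk. My first attempt is to follow the boundary argument of \cite{MR1459789}: use Koebe distortion, $|\psi'(z)|(1-|z|^2)\asymp\operatorname{dist}(\psi(z),\partial\Omega)$, along a sequence of preimages tending to $\psi^{-1}(\zeta)$, and combine it with the derivative-kernel bound $|\psi'(z)|\lesssim\bigl((1-|\psi(z)|^2)/(1-|z|^2)\bigr)^{(\alpha+2)/2}$, which comes from $C_\psi^*$ acting on derivative evaluations. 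The aim is to show that these two estimates are incompatible when $\psi(z)$ is near a point of $\partial\Omega\cap\mathbb{D}$ that accumulates at $\mathbb{T}$.
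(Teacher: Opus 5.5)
Your Steps~1 and~2 are fine. The coefficient estimate places $\mathcal{D}_\alpha$ in the disk algebra, so $\partial\Omega=\psi(\mathbb{T})\supset\mathbb{T}$, and a component $F$ of $\mathbb{D}\setminus\Omega$ with $\zeta\in\overline{F}\cap\mathbb{T}$ does exist.

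\textbf{The gap is Step~3.} It carries the entire contradiction, but you verify it only for a straight slit. The general claim is that Beurling projection or extremal length gives $1-|\psi^{-1}(w)|\le C(1-|w|)^{1+\epsilon}$ on a fixed proportion of Bergman disks $D(a_j,s)$ with $a_j\to\zeta$. This is asserted, not derived, and it genuinely fails on one side of a tangential prong.

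Take local half-plane coordinates at $\zeta$ and let $F$ be the curve $y=x^2$, $x>0$. In logarithmic coordinates $w=e^{s+i\theta}$, the wide side is the region $\theta_0(s)<\theta<\pi$ with $\theta_0(s)\approx e^{s}$. This deficit is integrable as $s\to-\infty$. So that side is conformal at the tip, with a finite angular derivative, and shows no depth compression; compression occurs only in the cusp. What you actually need is an argument that \emph{some} side of the contact point compresses, uniformly enough to fill Bergman disks, and nothing in the proposal supplies it.

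Your fallback does not supply it either. Koebe distortion and your derivative-kernel bound combine to
\[
\operatorname{dist}(w,\partial\Omega)\lesssim(1-|w|^2)\Bigl(\frac{1-|z|^2}{1-|w|^2}\Bigr)^{|\alpha|/2},\qquad w=\psi(z).
\]
This is contradicted only at points with $\operatorname{dist}(w,\partial\Omega)\asymp 1-|w|$ and $(1-|z|)/(1-|w|)\to0$, which is the same unproven compression. Near a point of $\partial\Omega\cap\mathbb{D}$, where you propose to look, the left-hand side is small and nothing is violated.

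\textbf{The missing idea is the angular-derivative mechanism used in the paper.} Two facts do the work:
\begin{enumerate}[label=\textup{(\alph*)}]
\item Boundedness of $C_\psi$ on $\mathcal{D}_\alpha$ with $\alpha<0$ forces a finite angular derivative at every point of $A=\{\xi\in\mathbb{T}:|\psi(\xi)|=1\}$ \cite[Theorem~4.8]{MR1397026}.
\item A univalent map cannot have finite angular derivatives at two distinct points of $\mathbb{T}$ with the same unimodular image \cite[Lemma~3.3]{MR1459789}.
\end{enumerate}
The situation your prong is meant to capture, $\Omega$ reaching a point of $\mathbb{T}$ from two sides, is exactly two distinct points of $A$ with a common image. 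These two facts rule it out. Via Julia--Carath\'eodory, they are also what would produce the compressing side you are looking for: an infinite angular derivative at one preimage means $(1-|z|)/(1-|\psi(z)|)\to0$ there.

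The paper then avoids local geometry entirely. Continuity and $\mathbb{T}\subset\overline{\psi(\mathbb{D})}$ give $\psi(A)=\mathbb{T}$. The two facts make $\psi|_A$ injective, so it is a homeomorphism $A\to\mathbb{T}$. No proper compact subset of $\mathbb{T}$ is homeomorphic to $\mathbb{T}$, so $A=\mathbb{T}$. Then $\partial\psi(\mathbb{D})\cap\mathbb{D}=\emptyset$, which gives $\psi(\mathbb{D})=\mathbb{D}$.
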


\begin{proof}
The boundedness of $C_\psi$ gives $\psi\in\mathcal{D}_\alpha$. Since every function in $\mathcal{D}_\alpha$ extends continuously to $\overline{\mathbb{D}}$ when $-1<\alpha<0$, the set
\[
A=\{\zeta\in\mathbb{T}:|\psi(\zeta)|=1\}
\]
is compact. By \cite[Theorem~4.8]{MR1397026}, $\psi$ has a finite angular derivative at every point of $A$.

We first show that $\psi(A)=\mathbb{T}$. Let $\eta\in\mathbb{T}$. Since $\mathbb{T}\subset\overline{\psi(\mathbb{D})}$, there exist $z_n\in\mathbb{D}$ such that $\psi(z_n)\rightarrow\eta$.
Passing to a subsequence, we may assume that $z_n\to\zeta\in\overline{\mathbb{D}}$. Continuity gives $\psi(\zeta)=\eta$. Since $\psi(\mathbb{D})\subset\mathbb{D}$ and $|\eta|=1$, we must have $\zeta\in\mathbb{T}$. Thus $\zeta\in A$, and consequently $\mathbb{T}\subseteq\psi(A)$. The reverse inclusion follows from the definition of $A$, so we have $\psi(A)=\mathbb{T}.$

Next, suppose that $\zeta_1,\zeta_2\in A$ are distinct and satisfy $\psi(\zeta_1)=\psi(\zeta_2)\in\mathbb{T}.$
By \cite[Lemma~3.3]{MR1459789}, univalence implies that $\psi$ cannot have finite angular derivatives at each of these two points. This contradicts the property established above. Hence $\psi|_A$ is injective. Since $A$ is compact, the continuous bijection
\[
\psi|_A:A\longrightarrow\mathbb{T}
\]
is a homeomorphism.

We claim that $A=\mathbb{T}$. Otherwise, choose $\xi\in\mathbb{T}\setminus A$. The punctured circle $\mathbb{T}\setminus\{\xi\}$ is homeomorphic to $\mathbb{R}$, so $A$ is homeomorphic to a compact subset of $\mathbb{R}$. Since $A$ is connected and contains more than one point, this subset is a nondegenerate closed interval. Such an interval cannot be homeomorphic to $\mathbb{T}$.

Finally, we show that $\psi(\mathbb{D})=\mathbb{D}$. If $\psi(\mathbb{D})$ were a proper subdomain of $\mathbb{D}$, there would exist
\[
w\in\partial\psi(\mathbb{D})\cap\mathbb{D}.
\]
By compactness and continuity, $w=\psi(\zeta)$ for some $\zeta\in\overline{\mathbb{D}}$. Since $\psi(\mathbb{D})$ is open, its boundary point $w$ does not belong to $\psi(\mathbb{D})$, so $\zeta\notin\mathbb{D}$. Thus $\zeta\in\mathbb{T}=A$, which implies
\[
|w|=|\psi(\zeta)|=1.
\]
This contradicts $w\in\mathbb{D}$. Hence $\psi(\mathbb{D})=\mathbb{D}$, and univalence shows that $\psi$ is an automorphism of $\mathbb{D}$.
\end{proof}

For $\alpha>-1$ and $1\leq p<\infty$, the weighted Bergman space $A_{\alpha}^{p}$ consists of all analytic functions $f$ on $\mathbb{D}$ such that
\[
\|f\|_{A_{\alpha}^{p}}^{p}=\int_{\mathbb{D}}|f(z)|^{p}\,dA_{\alpha}(z)<\infty.
\]

\begin{lemma}
\label{lem2.10} Let $\alpha>-1$, $1\leq p<\infty$, and let $G$ be a Borel
subset of $\mathbb{D}$. The following statements are equivalent:

\noindent\textup{(i)} There exists a constant $C>0$ such that
\[
\int_{G} |f(z)|^{p}\,dA_{\alpha}(z) \geq C\int_{\mathbb{D}}|f(z)|^{p}%
\,dA_{\alpha}(z), \qquad f\in A_{\alpha}^{p}.
\]

\noindent\textup{(ii)} There exist constants $\delta>0$ and $r>0$ such that
\[
A_{\alpha}\bigl(G\cap D(z,r)\bigr)
\geq\delta A_{\alpha}\bigl(D(z,r)\bigr), \qquad z\in\mathbb{D}.
\]

\end{lemma}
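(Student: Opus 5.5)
This is Luecking's dominating set theorem for weighted Bergman spaces. I would prove the two directions separately. The direction (i)$\Rightarrow$(ii) uses normalized kernels, as in Lemma~\ref{lem2.3}. The direction (ii)$\Rightarrow$(i) uses a local subharmonicity estimate on Bergman disks, which is the heart of Luecking's argument.

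\emph{(i)$\Rightarrow$(ii).} For $z\in\mathbb{D}$ take $f=k_z$ with
\[
k_z(w)=\frac{(1-|z|^2)^{(\alpha+2)/p}}{(1-\overline z w)^{2(\alpha+2)/p}},
\]
so that $\|k_z\|_{A^p_\alpha}^p=1$ and, by the M\"obius change of variables used for \eqref{eq:2.10}, $|k_z|^p\,dA_\alpha$ is the push-forward of $dA_\alpha$ under $\sigma_z$. Hence
\[
\int_{\mathbb{D}\setminus D(z,r)}|k_z|^p\,dA_\alpha=(1-\tanh^2 r)^{\alpha+1},
\]
which is below $C/2$ for $r$ large, uniformly in $z$. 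Then (i) gives
\[
\int_{G\cap D(z,r)}|k_z|^p\,dA_\alpha\ge C/2 .
\]
On $D(z,r)$ we have $|k_z|^p\asymp (1-|z|^2)^{-(\alpha+2)}\asymp A_\alpha(D(z,r))^{-1}$. This yields $A_\alpha(G\cap D(z,r))\ge\delta A_\alpha(D(z,r))$.

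\emph{(ii)$\Rightarrow$(i).} First I would reduce to unweighted area. By Lemma~\ref{lem2.5}, (ii) gives $A(G\cap D(z,r))\ge\delta_1 A(D(z,r))$, and inside $D(z,r)$ the measures $dA_\alpha$ and $(1-|z|^2)^\alpha dA$ are comparable. The key local claim is the following:

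\begin{quote}
there exist $R>r$ and $c>0$ such that, for every $z$ and every analytic $f$,
\[
\int_{D(z,R)}|f|^p\,dA\le c\int_{G\cap D(z,R)}|f|^p\,dA+\tfrac12 \int_{D(z,R)}|f|^p\,dA\ \text{ (modulo a good/bad split).}
\]
\end{quote}

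In this form the claim is false pointwise, so I would follow Luecking's scheme. Fix a lattice $\{a_k\}$ of radius $r$ as in the proof of Lemma~\ref{lem2.3}. Call $a_k$ \emph{good} if
\[
\sup_{D(a_k,r)}|f|^p\le \frac{M}{A_\alpha(D(a_k,r))}\int_{D(a_k,r)}|f|^p\,dA_\alpha ,
\]
for a large constant $M$. The bad disks contribute at most a small fraction of $\|f\|^p$: the subharmonic sub-mean value estimate bounds $\sup_{D(a_k,r)}|f|^p$ by the average over $D(a_k,2r)$, and the finite overlap \eqref{eq:2.5} then gives the bound $\frac{N}{M}$ times the $D(a_k,2r)$-integrals summed. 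On a good disk, $|f|^p$ is dominated by its average, so $\log|f|$ lies close to a local maximum in a controlled sense. The Cartan-type (or Remez-type) lemma then says that the set where $|f|^p<\eta\cdot\text{average}$ has area at most $\tfrac{\delta_1}{2}A(D)$ once $\eta$ is small relative to $M$. This works uniformly after pulling back to $D(0,r)$ by $\sigma_{a_k}$, where it is a compactness statement for the normal family
\[
\{h \text{ analytic}:\sup_{D(0,r)}|h|\le M,\ \textstyle\int_{D(0,r)}|h|^p\,dA\ge 1\},
\]
whose members cannot be tiny on a set of positive measure. Hence
\[
\int_{G\cap D(a_k,r)}|f|^p\,dA_\alpha\gtrsim\eta\,\delta_1\int_{D(a_k,r)}|f|^p\,dA_\alpha
\]
on good disks. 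Summing over good $k$ with bounded overlap gives $\int_G|f|^p\,dA_\alpha\gtrsim\|f\|^p-\tfrac{N}{M}\|f\|^p$, and choosing $M$ large proves (i).

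The main obstacle is the uniform local lemma on good disks: an analytic function controlled in sup by its average cannot be small on a fixed fraction of the disk. I would prove it by contradiction via normal families. Limits of the rescaled functions are nonzero analytic functions, and such a limit cannot vanish on a set of positive measure. The only care needed is that $|f|^p$ converges in $L^1$ and that the sets $\sigma_{a_k}(G)$ can be replaced by arbitrary measurable sets of measure at least $\delta_1 A(D(0,r))$, so that the constant is independent of $G$.
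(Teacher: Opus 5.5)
Your proof is correct, but it takes a genuinely different route from the paper. The paper proves neither direction itself. It cites Luecking's Theorem~1 and the converse in Section~2 of \cite{MR722745}, for the weight $(1-|z|^2)^{\alpha}$, to get that (i) is equivalent to the \emph{unweighted} density condition $A(G\cap D(z,r))\geq\delta_0A(D(z,r))$. It then passes to (ii) by Lemma~\ref{lem2.5}.

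You instead give a self-contained argument.

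For necessity, your $p$-normalized kernels satisfy $|k_z|^p=|\kappa_z^\alpha|^2$, so \eqref{eq:2.10} and the kernel size estimate from Lemma~\ref{lem2.3} apply verbatim. This is simpler than Lemma~\ref{lem2.3}, since $\chi_G\leq1$ makes the tail estimate immediate.

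For sufficiency, the bad disks lose at most a fraction $C_0N/M$ of $\|f\|^p$, by the sub-mean-value inequality and finite overlap. Your normal-families lemma on good disks is sound:
a limit $h$ of the normalized, uniformly bounded functions satisfies $\int_{D(0,r)}|h|^p\,dA\geq1$ by dominated convergence;
$\int_{E_n}|h|^p\,dA\to0$ because $\int\bigl||h_n|^p-|h|^p\bigr|\,dA\to0$;
and this contradicts $A(\{|h|^p<\eta\})\to0$ as $\eta\to0$, which holds because the zeros of $h$ are isolated.
The order of constants is also correct. $M$ is fixed first from $C_0N$, and only then is the good-disk constant $c(M,\delta)$ determined. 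For large $r$ the overlap constant $N$ depends on $r$, which is harmless.

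What your route buys is a proof that uses only lattices, subharmonicity and Montel's theorem, and it works verbatim for $0<p<1$ as well. The cost is that the constant in (i) comes from a contradiction argument and is not effective. A quantitative Cartan- or Remez-type estimate for $\log|f|$ would make its dependence on $\delta$ and $r$ explicit.

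Finally, the displayed ``key local claim,'' which you note is false as stated, is never used and should be deleted.
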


\begin{proof}
D. H. Luecking's Theorem~1 and the converse proved in Section~2 of
\cite{MR722745}, applied to the unit disk with weight
$(1-|z|^{2})^{\alpha}$, show that condition \textup{(i)} is
equivalent to the existence of constants $\delta_{0}>0$ and
$r>0$ such that
\[
A\bigl(G\cap D(z,r)\bigr)
\geq\delta_{0}A\bigl(D(z,r)\bigr), \qquad z\in\mathbb{D}.
\]
By Lemma~\ref{lem2.5}, this unweighted density condition is equivalent to the
weighted density condition \textrm{(ii)}, with comparable constants. Hence the
conclusion follows.

The proof is complete.
\end{proof}

The next lemma combines the geometric density condition with the basic
equivalence between closed range and the Reverse Carleson property established above.

\begin{lemma}
\label{lem2.11} Let $-1<\alpha\leq0$, and let $\varphi$ be a normalized
self-map of $\mathbb{D}$ in $\mathcal{D}_{\alpha}$. Suppose that
there exist constants $\delta>0$ and $r>0$ such that
\[
A_{\alpha}\bigl(\varphi(\mathbb{D})\cap D(z,r)\bigr)
\geq\delta A_{\alpha}\bigl(D(z,r)\bigr), \qquad z\in\mathbb{D}.
\]
Then $\mu_{\varphi,\alpha}$ is a Reverse Carleson measure for $A_{\alpha}^{2}%
$. Consequently, if $C_{\varphi}$ is bounded on $\mathcal{D}_{\alpha}$, then
$C_{\varphi}$ has closed range on $\mathcal{D}_{\alpha}$.
\end{lemma}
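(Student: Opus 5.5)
The plan is to combine Lemma~\ref{lem2.1}, which bounds $N_{\varphi,\alpha}$ below by a weighted indicator of the image, with Luecking's dominating-set theorem in the form of Lemma~\ref{lem2.10}, and then to invoke Proposition~\ref{prop2.2}.

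First, let $G=\varphi(\mathbb{D})$, which is an open, hence Borel, subset of $\mathbb{D}$. By hypothesis $G$ satisfies condition (ii) of Lemma~\ref{lem2.10} with the given $\delta$ and $r$. Applying that lemma with $p=2$ yields a constant $C>0$ such that
\[
\int_{G}|g(w)|^{2}\,dA_{\alpha}(w)\geq C\int_{\mathbb{D}}|g(w)|^{2}\,dA_{\alpha}(w),\qquad g\in A_{\alpha}^{2}.
\]

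Second, since $-1<\alpha\leq0$ and $\varphi(0)=0$, Lemma~\ref{lem2.1} gives
\[
N_{\varphi,\alpha}(w)\geq n_{\varphi}(w)(1-|w|^{2})^{\alpha}\geq\chi_{G}(w)(1-|w|^{2})^{\alpha},\qquad w\in\mathbb{D},
\]
because $n_{\varphi}(w)\geq1$ for every $w\in G$. Multiplying by $|g|^{2}$ and integrating against $dA$ gives
\[
\int_{\mathbb{D}}|g|^{2}\,d\mu_{\varphi,\alpha}\geq\frac{1}{1+\alpha}\int_{G}|g|^{2}\,dA_{\alpha}\geq\frac{C}{1+\alpha}\,\|g\|_{A_{\alpha}^{2}}^{2},
\]
so $\mu_{\varphi,\alpha}$ is a Reverse Carleson measure for $A_{\alpha}^{2}$. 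The left-hand side may be $+\infty$ a priori, which does not affect the inequality.

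Third, when $C_{\varphi}$ is bounded, $\varphi$ is a normalized self-map in $\mathcal{D}_{\alpha}$, and Proposition~\ref{prop2.2} turns the Reverse Carleson property into closed range.

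I expect no real obstacle here. The only substantive input is Luecking's theorem, which is packaged in Lemma~\ref{lem2.10}, and the sign restriction $\alpha\leq0$, which is exactly what makes Lemma~\ref{lem2.1} available.
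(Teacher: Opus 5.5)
Your proposal is correct and follows essentially the same route as the paper's proof. Lemma~\ref{lem2.1} gives $N_{\varphi,\alpha}(w)\geq(1-|w|^{2})^{\alpha}$ on $\varphi(\mathbb{D})$, Lemma~\ref{lem2.10} with $p=2$ turns the density hypothesis into the dominating-set inequality, and Proposition~\ref{prop2.2} converts the resulting Reverse Carleson property into closed range.
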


\begin{proof}
By Lemma~\ref{lem2.1},
\[
N_{\varphi,\alpha}(w) \geq(1-|w|^{2})^{\alpha}, \qquad w\in\varphi
(\mathbb{D}).
\]
Hence, for every $g\in A_{\alpha}^{2}$,
\begin{align*}
\int_{\mathbb{D}}|g(w)|^{2}\,d\mu_{\varphi,\alpha}(w)  &  \geq\int
_{\varphi(\mathbb{D})} |g(w)|^{2}(1-|w|^{2})^{\alpha}\,dA(w)\\
&  = \frac{1}{1+\alpha} \int_{\varphi(\mathbb{D})}|g(w)|^{2}\,dA_{\alpha}(w).
\end{align*}
By Lemma~\ref{lem2.10}, the density hypothesis implies that there exists $C>0$
such that
\[
\int_{\varphi(\mathbb{D})}|g(w)|^{2}\,dA_{\alpha}(w) \geq C\|g\|_{A_{\alpha
}^{2}}^{2}, \qquad g\in A_{\alpha}^{2}.
\]
Therefore,
\[
\int_{\mathbb{D}}|g(w)|^{2}\,d\mu_{\varphi,\alpha}(w) \geq\frac{C}{1+\alpha
}\|g\|_{A_{\alpha}^{2}}^{2},
\]
so $\mu_{\varphi,\alpha}$ is a Reverse Carleson measure for $A_{\alpha}^{2}$.
If $C_{\varphi}$ is bounded on $\mathcal{D}_{\alpha}$, the closed range
conclusion follows from Proposition~\ref{prop2.2}.
\end{proof}

\begin{lemma}
\label{lem2.12} Let $-1<\alpha<0$, and let $\varphi$ be a normalized
self-map of $\mathbb{D}$ in $\mathcal{D}_{\alpha}$ with
\[
\varphi(\mathbb{D})=U\setminus K,
\]
where $U\subseteq\mathbb{D}$ is simply connected and $K\Subset U$.
If $\psi$ is a conformal bijection of $\mathbb{D}$ onto $U$ with $\psi(0)=0$, then
$C_{\psi}$ is bounded on $\mathcal{D}_{\alpha}$.
\end{lemma}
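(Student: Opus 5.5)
The plan is to compare the counting function of $\psi$ directly with that of $\varphi$, by factoring $\varphi$ through $\psi$ and using the Schwarz lemma. Since $\varphi(\mathbb{D})=U\setminus K\subset U$ and $\psi:\mathbb{D}\to U$ is a conformal bijection, the map
\[
h=\psi^{-1}\circ\varphi
\]
is an analytic self-map of $\mathbb{D}$ with $h(0)=\psi^{-1}(0)=0$, and $\varphi=\psi\circ h$. Because $\psi$ is univalent, $N_{\psi,\alpha}(w)=(1-|\psi^{-1}(w)|^{2})^{\alpha}$ for $w\in U$ and $N_{\psi,\alpha}=0$ off $U$. Since $\psi(0)=0$, the change of variables formula gives, for $f\in\mathcal{D}_{\alpha}$,
\[
\|C_{\psi}f\|_{\mathcal{D}_{\alpha}}^{2}=|f(0)|^{2}+(1+\alpha)\int_{U}|f'(w)|^{2}N_{\psi,\alpha}(w)\,dA(w).
\]
I will split this integral into the parts over $U\setminus K$ and over $K$.

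\emph{Part over $U\setminus K$.} Let $w\in U\setminus K=\varphi(\mathbb{D})$ and choose $z$ with $\varphi(z)=w$. Then $\psi^{-1}(w)=h(z)$, and the Schwarz lemma gives $|h(z)|\le|z|$, so $1-|\psi^{-1}(w)|^{2}\ge1-|z|^{2}$. Since $\alpha<0$, the power $t\mapsto t^{\alpha}$ is decreasing, and therefore
\[
N_{\psi,\alpha}(w)=(1-|h(z)|^{2})^{\alpha}\le(1-|z|^{2})^{\alpha}\le N_{\varphi,\alpha}(w).
\]
Consequently
\[
\int_{U\setminus K}|f'|^{2}N_{\psi,\alpha}\,dA\le\int_{\mathbb{D}}|f'|^{2}N_{\varphi,\alpha}\,dA\le\frac{1}{1+\alpha}\|C_{\varphi}f\|_{\mathcal{D}_{\alpha}}^{2}\lesssim\|f\|_{\mathcal{D}_{\alpha}}^{2},
\]
using the boundedness of $C_{\varphi}$.

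\emph{Part over $K$.} The set $\psi^{-1}(K)$ is a compact subset of $\mathbb{D}$, so it lies in some $\mathbb{D}_{t}$ with $t<1$. Hence $N_{\psi,\alpha}\le(1-t^{2})^{\alpha}$ on $K$. Moreover $K\subset\overline{\mathbb{D}_{t'}}$ for some $t'<1$, since $K$ is compact in $U\subseteq\mathbb{D}$. By the subharmonic mean value estimate on disks of fixed radius around points of $\overline{\mathbb{D}_{t'}}$, on which the weight $(1-|w|^{2})^{\alpha}$ is bounded below, we get
\[
\sup_{K}|f'|^{2}\lesssim\int_{\mathbb{D}}|f'|^{2}\,dA_{\alpha}\le\|f\|_{\mathcal{D}_{\alpha}}^{2}.
\]
This bounds the integral over $K$ by $C\|f\|_{\mathcal{D}_{\alpha}}^{2}$. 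Adding the two parts together with $|f(0)|^{2}$ yields boundedness of $C_{\psi}$.

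The key step is the first estimate, namely realizing that the factorization $\varphi=\psi\circ h$ together with the Schwarz lemma reverses the favorable direction precisely when $\alpha<0$. The only remaining subtlety is that points of $K$ have no $\varphi$-preimage, which is exactly why the compactness of $K$ inside $U$ is needed. I do not expect any serious obstacle beyond checking that the single preimage $z$ suffices, which it does since $N_{\varphi,\alpha}$ dominates any one of its summands.
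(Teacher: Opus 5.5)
Your proof is correct. It shares the paper's starting point: the factorization $\varphi=\psi\circ h$ with $h=\psi^{-1}\circ\varphi$, the Schwarz lemma for $h$, and compactness of $\psi^{-1}(K)$. Your pointwise inequality $N_{\psi,\alpha}\le N_{\varphi,\alpha}$ on $U\setminus K$ is, after the substitution $w=\psi(u)$, exactly Lemma~\ref{lem2.1} applied to $h$, since $N_{\varphi,\alpha}(\psi(u))=N_{h,\alpha}(u)$ and $N_{\psi,\alpha}(\psi(u))=(1-|u|^{2})^{\alpha}$. So the real difference is how the hole is handled.

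\textbf{The paper's route.} It works with $F=f\circ\psi$ in the $h$-plane and discards the whole inner disk $\overline{\mathbb{D}_r}\supset\psi^{-1}(K)$. It then recovers the full norm of $F$ from the outer annulus by the integral-means estimate of Lemma~\ref{lem2.8}. This yields the purely multiplicative bound $\|C_\psi\|^{2}\le(1-r^{2})^{-\alpha-1}\|C_\varphi\|^{2}$. In effect it also gives the lower bound $\|C_hF\|_{\mathcal{D}_\alpha}\ge(1-r^{2})^{(\alpha+1)/2}\|F\|_{\mathcal{D}_\alpha}$ for analytic $F$, without assuming $C_h$ is bounded.

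\textbf{Your route.} You stay in the $w$-plane and discard only $K$. You bound its contribution directly by $\sup_K N_{\psi,\alpha}\cdot\sup_K|f'|^{2}\cdot A(K)$, using point evaluation in $A_\alpha^{2}$ (valid since the weight $(1-|w|^{2})^{\alpha}\ge 1$ for $\alpha<0$). This avoids Lemma~\ref{lem2.8} and is more elementary. The price is an additive constant depending on $K$ (through $t$, $t'$ and $A(K)$), rather than a clean multiplicative one.
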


\begin{proof}
Set $h=\psi^{-1}\circ\varphi$. Then $h(0)=0$ and
\[
h(\mathbb{D})=\mathbb{D}\setminus\psi^{-1}(K).
\]
Since $\psi^{-1}(K)$ is compact in $\mathbb{D}$, there exists
$r\in(0,1)$ such that
\[
\mathbb{D}\setminus\overline{\mathbb{D}_{r}}
\subset h(\mathbb{D}).
\]
By Lemma~\ref{lem2.1},
\[
N_{h,\alpha}(w)\geq(1-|w|^{2})^{\alpha},
\qquad w\in\mathbb{D}\setminus\overline{\mathbb{D}_{r}}.
\]
For $f\in\mathcal{D}_{\alpha}$, let $F=f\circ\psi$, initially regarded
only as an analytic function. The area formula for $h$, with nonnegative
integrals allowed to be infinite, and Lemma~\ref{lem2.8} give
\begin{align*}
\|C_{\varphi}f\|_{\mathcal{D}_{\alpha}}^{2}
&=|f(0)|^{2}+(1+\alpha)
  \int_{\mathbb{D}}|F'(w)|^{2}N_{h,\alpha}(w)\,dA(w)\\
&\geq |f(0)|^{2}+
  \int_{\mathbb{D}\setminus\overline{\mathbb{D}_{r}}}
  |F'(w)|^{2}\,dA_{\alpha}(w)\\
&\geq (1-r^{2})^{\alpha+1}
  \left(|F(0)|^{2}+
  \int_{\mathbb{D}}|F'(w)|^{2}\,dA_{\alpha}(w)\right).
\end{align*}
The left-hand side is finite. Hence, $F\in\mathcal{D}_{\alpha}$ and
\[
\|C_{\psi}f\|_{\mathcal{D}_{\alpha}}^{2}
\leq (1-r^{2})^{-\alpha-1}\|C_{\varphi}\|^{2}
\|f\|_{\mathcal{D}_{\alpha}}^{2}.
\]
This proves the boundedness of $C_{\psi}$ without assuming that
$C_h$ is bounded.
\end{proof}

\begin{lemma}
\label{lem2.13} Let $-1<\alpha<0$, and let $\varphi$ be a normalized
self-map of $\mathbb{D}$ in $\mathcal{D}_{\alpha}$. Suppose that
$\Omega=\varphi(\mathbb{D})$ satisfies the Compact-Hole Condition. Then
\[
\mathbb{T}\subset\overline{\Omega}
\]
if and only if there exists $r_{0}\in(0,1)$ such that
\[
\mathbb{D}\setminus\overline{\mathbb{D}_{r_{0}}}\subset\Omega.
\]
In this case $\Omega=\mathbb{D}\setminus K$ for some compact
$K\subset\mathbb{D}$. If $\Omega$ is simply connected, then
$\Omega=\mathbb{D}$.
\end{lemma}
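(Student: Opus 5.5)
The reverse implication is immediate: if $\mathbb{D}\setminus\overline{\mathbb{D}_{r_0}}\subset\Omega$, then every point of $\mathbb{T}$ is a limit of points $t\zeta\in\Omega$ with $t\to1^-$, so $\mathbb{T}\subset\overline{\Omega}$. The substance is the forward direction, and the plan is to pass to the filled domain $U$, show it is the whole disk, and then use the compactness of $K$.

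Assume $\mathbb{T}\subset\overline{\Omega}$ and write $\Omega=U\setminus K$ with $U\subseteq\mathbb{D}$ simply connected and $K\Subset U$. Since $0=\varphi(0)\in\Omega\subset U$ and $U\neq\mathbb{C}$, the Riemann mapping theorem gives a conformal bijection $\psi:\mathbb{D}\to U$ with $\psi(0)=0$. By Lemma~\ref{lem2.12}, $C_{\psi}$ is bounded on $\mathcal{D}_{\alpha}$. Because $\Omega\subset U=\psi(\mathbb{D})$, we get
\[
\mathbb{T}\subset\overline{\Omega}\subset\overline{\psi(\mathbb{D})}.
\]
Since $\psi$ is univalent, Lemma~\ref{lem2.9} applies and shows that $\psi$ is an automorphism. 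Hence $U=\psi(\mathbb{D})=\mathbb{D}$, and so $\Omega=\mathbb{D}\setminus K$ with $K\subset\mathbb{D}$ compact. This is exactly the stated form.

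Next, compactness of $K$ inside $\mathbb{D}$ gives $r_0:=\max_{w\in K}|w|<1$, with $r_0:=0$ if $K=\emptyset$. If $r_0=0$ we may replace it by any number in $(0,1)$. Then $\mathbb{D}\setminus\overline{\mathbb{D}_{r_0}}\subset\mathbb{D}\setminus K=\Omega$, which is the annular inclusion.

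For the last assertion, suppose $\Omega$ is simply connected. We may then take $U=\Omega$ and $K=\emptyset$ in the Compact-Hole Condition. The argument above, applied with this choice, yields $\Omega=U=\mathbb{D}$ directly. Here we use that Lemma~\ref{lem2.12} holds for any admissible decomposition of $\Omega$.

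The main obstacle is the step $U=\mathbb{D}$. It requires the boundedness of $C_{\psi}$ from Lemma~\ref{lem2.12}, which in turn relies on $-1<\alpha<0$ and the outer annulus in $h(\mathbb{D})$, together with the boundary-uniqueness argument of Lemma~\ref{lem2.9}. Both lemmas are available, so the remaining work is only the bookkeeping carried out above.
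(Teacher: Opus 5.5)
Your proposal is correct and follows essentially the same route as the paper: apply Lemma~\ref{lem2.12} to a Riemann map $\psi$ of $\mathbb{D}$ onto $U$ with $\psi(0)=0$, use Lemma~\ref{lem2.9} to force $U=\mathbb{D}$, and conclude from the compactness of $K$, taking $U=\Omega$ and $K=\emptyset$ in the simply connected case. Your explicit proof of the reverse implication and your explicit choice of $r_0$ only fill in details that the paper leaves implicit.
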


\begin{proof}
Only the forward implication requires proof. Choose a decomposition
$\Omega=U\setminus K$ as in Definition~\ref{def2.7}, and a
conformal bijection $\psi$ of $\mathbb{D}$ onto $U$ with $\psi(0)=0$.
Lemma~\ref{lem2.12} shows that $C_{\psi}$ is bounded on
$\mathcal{D}_{\alpha}$. Since $\Omega\subset U$, the hypothesis gives
$\mathbb{T}\subset\overline{U}$. Lemma~\ref{lem2.9} therefore implies
that $\psi$ is an automorphism, so $U=\mathbb{D}$.
Consequently $K\Subset\mathbb{D}$, and we may choose $r_{0}\in(0,1)$
such that $K\subset\mathbb{D}_{r_{0}}$. This gives the required
annular inclusion. If $\Omega$ is simply connected, we may take
$U=\Omega$ and $K=\emptyset$ in the same argument, giving $\Omega=\mathbb{D}$.
\end{proof}

We can now characterize closed range for bounded composition operators
whose symbols have images satisfying the Compact-Hole Condition.

\begin{theorem}
\label{thm2.14} Let $-1<\alpha<0$, and let $\varphi$ be a normalized
self-map of $\mathbb{D}$ in $\mathcal{D}_{\alpha}$. Suppose that
$\varphi(\mathbb{D})$ satisfies the Compact-Hole Condition. Then the
following statements are equivalent:

\begin{enumerate}[label=\textup{(\roman*)}]
\item The range of $C_{\varphi}$ is closed in $\mathcal{D}_{\alpha}$.

\item $\mu_{\varphi,\alpha}$ satisfies the Reverse Carleson Disk Condition;
that is, there exist constants $\delta>0$ and $r>0$ such that
\[
\mu_{\varphi,\alpha}(D(z,r))\geq\delta A_{\alpha}(D(z,r)),
\qquad z\in\mathbb{D}.
\]

\item There exist constants $\delta>0$ and $r>0$ such that
\[
A_{\alpha}\bigl(\varphi(\mathbb{D})\cap D(z,r)\bigr)
\geq\delta A_{\alpha}\bigl(D(z,r)\bigr),\qquad z\in\mathbb{D}.
\]

\item There exists $r_{0}\in(0,1)$ such that
\[
\mathbb{D}\setminus\overline{\mathbb{D}_{r_{0}}}
\subset\varphi(\mathbb{D}).
\]

\item $\mu_{\varphi,\alpha}$ is a Reverse Carleson measure for
$A_{\alpha}^{2}$.
\end{enumerate}
Moreover, these conditions are equivalent to
$\mathbb{T}\subset\overline{\varphi(\mathbb{D})}$.
In particular, when $\varphi(\mathbb{D})$ is simply connected,
they are equivalent to $\varphi(\mathbb{D})=\mathbb{D}$.
\end{theorem}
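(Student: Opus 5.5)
The plan is to close a cycle of implications, using the earlier results almost directly, with $\mathbb{T}\subset\overline{\varphi(\mathbb{D})}$ inserted as an intermediate node. The order will be (i)$\Leftrightarrow$(v), then (i)$\Rightarrow$(ii), (ii)$\Rightarrow\mathbb{T}\subset\overline{\varphi(\mathbb{D})}$, then $\mathbb{T}\subset\overline{\varphi(\mathbb{D})}\Rightarrow$(iv), then (iv)$\Rightarrow$(iii), and finally (iii)$\Rightarrow$(i).

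The opening steps come straight from earlier results.
\begin{itemize}
\item Since $C_\varphi$ is assumed bounded, Proposition~\ref{prop2.2} gives (i)$\Leftrightarrow$(v).
\item Theorem~\ref{thm2.4} (equivalently, Lemma~\ref{lem2.3} applied to (v)) gives (i)$\Rightarrow$(ii).
\end{itemize}

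For (ii)$\Rightarrow\mathbb{T}\subset\overline{\varphi(\mathbb{D})}$, I would argue directly instead of going through closed range. Suppose some $\zeta\in\mathbb{T}$ has a neighborhood $S(\zeta,\epsilon)$ disjoint from $\varphi(\mathbb{D})$. Since $N_{\varphi,\alpha}$ vanishes off $\varphi(\mathbb{D})$, we get $\mu_{\varphi,\alpha}(D(z,r))=0$ for every $z$ close enough to $\zeta$. The reason is that Bergman disks of fixed radius $r$ shrink in Euclidean diameter, comparably to $1-|z|$, as $|z|\to1$, so $D(z,r)\subset S(\zeta,\epsilon)$ once $z$ is near $\zeta$. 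This contradicts (ii), because $A_\alpha(D(z,r))>0$.

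For $\mathbb{T}\subset\overline{\varphi(\mathbb{D})}\Rightarrow$(iv), the hypotheses are exactly those of Lemma~\ref{lem2.13}: $-1<\alpha<0$, $C_\varphi$ is bounded, and the Compact-Hole Condition holds. That lemma yields the annulus $\mathbb{D}\setminus\overline{\mathbb{D}_{r_0}}\subset\varphi(\mathbb{D})$.

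For (iv)$\Rightarrow$(iii), I fix some $r>0$ and split into two cases.
\begin{itemize}
\item If $z$ is close to the boundary, with $|z|\ge r_1$ and $r_1$ depending on $r_0$ and $r$, then the portion of $D(z,r)$ lying in the annulus has weighted area comparable to $A_\alpha(D(z,r))$. In particular, when $|z|>r_1$ is large enough, the whole disk $D(z,r)$ lies in $\{|w|>r_0\}$. This holds because $\beta(0,w)\ge\beta(0,z)-r$ for $w\in D(z,r)$.
\item If $|z|\le r_1$, the disks $D(z,r)$ lie in a compact region. There $A_\alpha(D(z,r))$ is bounded above, while $D(z,r)$ also meets the annulus in a set of area bounded below uniformly. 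To guarantee this uniformity, I would choose $r$ large enough that $D(z,r)\supset\{r_0<|w|<r_0'\}\cap D(z,r)$ contains a fixed Euclidean disk in the annulus for all $|z|\le r_1$. Taking $r$ so large that $D(z,r)\supset\mathbb{D}_{r_2}$ for some fixed $r_2>r_0$ works, since $r_1$ depends only on $r_0$ and the chosen $r$. To avoid circularity, it is cleaner to argue instead that $z\mapsto A_\alpha(\varphi(\mathbb{D})\cap D(z,r))$ is continuous and positive on the compact set $\overline{\mathbb{D}_{r_1}}$. Positivity holds because each $D(z,r)$ is an open set reaching toward the boundary.
\end{itemize}

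The last implication, (iii)$\Rightarrow$(i), is Lemma~\ref{lem2.11}, which applies because $\alpha\le0$ and $C_\varphi$ is bounded. This closes the cycle and also establishes the equivalence with $\mathbb{T}\subset\overline{\varphi(\mathbb{D})}$, since that condition lies on the cycle.

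For the simply connected case, Lemma~\ref{lem2.13} shows that $\mathbb{T}\subset\overline{\varphi(\mathbb{D})}$ forces $\varphi(\mathbb{D})=\mathbb{D}$. Conversely, $\varphi(\mathbb{D})=\mathbb{D}$ trivially gives (iv).

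Since the analytic heavy lifting is already done in Lemmas~\ref{lem2.9}--\ref{lem2.13}, I expect the main obstacle to be only the uniformity in (iv)$\Rightarrow$(iii) for small $|z|$. The continuity and compactness argument sketched there should handle it.
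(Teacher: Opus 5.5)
Apart from (iv)$\Rightarrow$(iii), your cycle uses the same ingredients as the paper: Proposition~\ref{prop2.2}, Theorem~\ref{thm2.4}, the shrinking Bergman disk argument for boundary contact, Lemma~\ref{lem2.13}, and Lemma~\ref{lem2.11}. Those steps are fine. The step that fails as written is the positivity claim in (iv)$\Rightarrow$(iii). You fix an arbitrary $r>0$ and assert that $A_\alpha(\varphi(\mathbb{D})\cap D(z,r))>0$ for $|z|\le r_1$ because ``each $D(z,r)$ is an open set reaching toward the boundary.'' But a Bergman disk is relatively compact in $\mathbb{D}$. Moreover, (iv) does not prevent $\mathbb{D}\setminus\varphi(\mathbb{D})$ from having interior. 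For example, the open set $G=\mathbb{D}\setminus\{|w-\tfrac12|\le\tfrac14\}$ contains $0$ and the annulus $\{\tfrac34<|w|<1\}$, yet $D(\tfrac12,r)\cap G=\emptyset$ for small $r$. In that situation the continuous function you minimize over $\overline{\mathbb{D}_{r_1}}$ vanishes at some point, and the compactness argument gives nothing.

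The repair is to choose the radius in terms of $r_0$, say with $\tanh r>r_0$.
\begin{itemize}
\item If $|z|>r_0$, then $z$ lies in the open set $\varphi(\mathbb{D})$, so positivity is immediate.
\item If $|z|\le r_0$, write $z=|z|e^{i\theta}$ (any $\theta$ if $z=0$), pick $t\in(r_0,\tanh r)$, and put $w=te^{i\theta}$. Then $\beta(z,w)=\operatorname{arctanh}t-\operatorname{arctanh}|z|<r$, so $D(z,r)$ meets the open annulus $\{r_0<|w|<1\}\subset\varphi(\mathbb{D})$.
\end{itemize}
Once $r$ is fixed this way, define $r_1$ by $\operatorname{arctanh}r_1=\operatorname{arctanh}r_0+r$. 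The ratio $A_\alpha(\varphi(\mathbb{D})\cap D(z,r))/A_\alpha(D(z,r))$ equals $1$ for $|z|>r_1$. It is continuous on $\overline{\mathbb{D}_{r_1}}$ by dominated convergence, as in Example~\ref{ex2.18}, and positive there. This yields (iii), and there is no circularity because $r$ depends only on $r_0$.

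The paper avoids choosing a radius by hand. Lemma~\ref{lem2.8} gives $\int_{\varphi(\mathbb{D})}|g|^2\,dA_\alpha\ge(1-r_0^2)^{\alpha+1}\|g\|_{A_\alpha^2}^2$. The converse half of Luecking's theorem (Lemma~\ref{lem2.10}) then converts this reverse inequality into the density condition. Your corrected route is more elementary and gives an explicit radius. Luecking's theorem is still needed afterwards, in (iii)$\Rightarrow$(i) through Lemma~\ref{lem2.11}.
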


\begin{proof}
The implication \textup{(i)}$\Rightarrow$\textup{(ii)} follows from
Theorem~\ref{thm2.4}. Assume \textup{(ii)}. If
$\zeta\in\mathbb{T}\setminus\overline{\varphi(\mathbb{D})}$, some
Euclidean neighborhood of $\zeta$ misses $\varphi(\mathbb{D})$.
For $t<1$ sufficiently close to $1$, the disk $D(t\zeta,r)$ is contained
in this neighborhood, where $r$ is the fixed radius in \textup{(ii)}. Then $\mu_{\varphi,\alpha}(D(t\zeta,r))=0$, a contradiction.
Thus $\mathbb{T}\subset\overline{\varphi(\mathbb{D})}$.
Lemma~\ref{lem2.13} gives
\[
\mathbb{D}\setminus\overline{\mathbb{D}_{r_{0}}}
\subset\varphi(\mathbb{D})
\]
for some $r_{0}\in(0,1)$, which is \textup{(iv)}.

Assume \textup{(iv)}. By Lemma~\ref{lem2.8}, for every $g\in A_{\alpha}^{2}$,
\[
\int_{\varphi(\mathbb{D})}|g(w)|^{2}\,dA_{\alpha}(w)
\geq (1-r_{0}^{2})^{\alpha+1}\|g\|_{A_{\alpha}^{2}}^{2}.
\]
Lemma~\ref{lem2.10} therefore yields \textup{(iii)}.
Finally, Lemma~\ref{lem2.11} gives
\textup{(iii)}$\Rightarrow$\textup{(v)}, and
Proposition~\ref{prop2.2} gives \textup{(v)}$\Rightarrow$\textup{(i)}.
This proves the equivalence of the five conditions. The equivalence with
$\mathbb{T}\subset\overline{\varphi(\mathbb{D})}$ and the simply connected
case follow from Lemma~\ref{lem2.13}.
\end{proof}

Since every finitely connected domain satisfies the Compact-Hole
Condition, Theorem~\ref{thm2.14} applies in particular to all normalized
self-maps in $\mathcal{D}_{\alpha}$ with finitely connected image.
The geometric assumption concerns the image of $\varphi$ and does not
require $\varphi$ to be univalent. In particular, the power maps in
Example~\ref{ex2.17} satisfy the simply connected case of the theorem.

\begin{corollary}
\label{cor2.15} Let $-1<\alpha<0$, and let $\Phi$ be a nonconstant
analytic self-map of $\mathbb{D}$ such that $C_{\Phi}$ is bounded on
$\mathcal{D}_{\alpha}$ and $\Phi(\mathbb{D})$ satisfies the Compact-Hole
Condition.
Set $\varphi=\sigma_{\Phi(0)}\circ\Phi$. Then the range of $C_{\Phi}$
is closed in $\mathcal{D}_{\alpha}$ if and only if the equivalent
conditions \textup{(ii)--(v)} in Theorem~\ref{thm2.14} hold for
$\varphi$. Equivalently,
\[
\mathbb{T}\subset\overline{\Phi(\mathbb{D})}.
\]
\end{corollary}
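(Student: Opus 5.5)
The plan is to reduce Corollary~\ref{cor2.15} to Theorem~\ref{thm2.14} applied to the normalized symbol $\varphi=\sigma_{a}\circ\Phi$, where $a=\Phi(0)$, using the normalization facts recorded in the Introduction. First, $\varphi$ is a nonconstant analytic self-map of $\mathbb{D}$ with $\varphi(0)=\sigma_a(a)=0$. Since $C_{\varphi}=C_{\Phi}C_{\sigma_{a}}$ and $C_{\sigma_a}$ is bounded and invertible on $\mathcal{D}_\alpha$, $C_\varphi$ is bounded. Hence $\varphi$ is a normalized self-map of $\mathbb{D}$ in $\mathcal{D}_{\alpha}$. The same factorization, together with the invertibility of $C_{\sigma_a}$, shows that $C_\Phi$ has closed range exactly when $C_\varphi$ does. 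Indeed, $\operatorname{ran} C_\Phi=\operatorname{ran}(C_\varphi C_{\sigma_a})=\operatorname{ran} C_\varphi$, because $C_{\sigma_a}$ is surjective.

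Next I check that $\varphi(\mathbb{D})$ satisfies the Compact-Hole Condition. If $\Phi(\mathbb{D})=U\setminus K$ with $U\subseteq\mathbb{D}$ simply connected and $K\Subset U$, then, since $\sigma_a$ is a bijection of $\mathbb{D}$,
\[
\varphi(\mathbb{D})=\sigma_a(U)\setminus\sigma_a(K).
\]
Here $\sigma_a$ is a homeomorphism of $\mathbb{D}$ (indeed of $\widehat{\mathbb{C}}$), so $\sigma_a(U)\subseteq\mathbb{D}$ is a simply connected domain and $\sigma_a(K)$ is a compact subset of $\sigma_a(U)$. Theorem~\ref{thm2.14} therefore applies to $\varphi$. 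It gives that closed range of $C_\varphi$, which is condition (i), is equivalent to each of (ii)--(v) for $\varphi$, and also to $\mathbb{T}\subset\overline{\varphi(\mathbb{D})}$. Combined with the first paragraph, this yields the first assertion.

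For the final equivalence, I use that $\sigma_a$ extends to a homeomorphism of $\overline{\mathbb{D}}$ mapping $\mathbb{T}$ onto $\mathbb{T}$. Thus $\overline{\varphi(\mathbb{D})}=\sigma_a\bigl(\overline{\Phi(\mathbb{D})}\bigr)$, and $\mathbb{T}\subset\overline{\Phi(\mathbb{D})}$ holds if and only if $\mathbb{T}\subset\overline{\varphi(\mathbb{D})}$, as already noted in the Introduction. There is no real obstacle. The only points needing care are that the Compact-Hole Condition and the boundary inclusion are transported by $\sigma_a$, and that the conditions (ii)--(v) are to be read for $\varphi$, not for $\Phi$, since $N_{\varphi,\alpha}$ is not invariant under composition with $\sigma_a$.
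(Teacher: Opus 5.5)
Your proposal is correct and follows the same route as the paper. Like the paper, you normalize by $\varphi=\sigma_{\Phi(0)}\circ\Phi$, use that $C_{\sigma_{\Phi(0)}}$ is bounded and invertible to transfer boundedness and closed range, check that $\sigma_{\Phi(0)}$ preserves the Compact-Hole Condition and the inclusion $\mathbb{T}\subset\overline{\Phi(\mathbb{D})}$, and then apply Theorem~\ref{thm2.14}; your range identity relies on the factorization $C_{\Phi}=C_{\varphi}C_{\sigma_{\Phi(0)}}$ rather than the $C_{\varphi}=C_{\Phi}C_{\sigma_{\Phi(0)}}$ you wrote first, but both hold because $\sigma_{\Phi(0)}$ is an involution.
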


\begin{proof}
The operator $C_{\sigma_{\Phi(0)}}$ is bounded and invertible, and
$C_{\Phi}=C_{\varphi}C_{\sigma_{\Phi(0)}}$. The disk automorphism
$\sigma_{\Phi(0)}$ preserves the Compact-Hole Condition: if
$\Phi(\mathbb{D})=U\setminus K$, then
$\varphi(\mathbb{D})=\sigma_{\Phi(0)}(U)\setminus\sigma_{\Phi(0)}(K)$,
where $\sigma_{\Phi(0)}(U)$ is simply connected and
$\sigma_{\Phi(0)}(K)\Subset\sigma_{\Phi(0)}(U)$.
It also preserves contact with the entire unit circle. The conclusion
follows from Theorem~\ref{thm2.14}.
\end{proof}

The next two examples satisfy the Compact-Hole Condition and show
that the closed range property can fail or hold even for simply
connected images.

\begin{example}
\label{ex2.16} The Compact-Hole Condition and boundedness do not by
themselves imply closed range. Let $-1<\alpha<0$, $0<a<1$, and
$\varphi_a(z)=az$. Its image $a\mathbb{D}$ is simply connected.
The monomials are orthogonal in $\mathcal{D}_{\alpha}$ and
$C_{\varphi_a}z^n=a^nz^n$, so $C_{\varphi_a}$ is bounded. However, for
$f_n=z^n/\|z^n\|_{\mathcal{D}_{\alpha}}$, $n\geq1$, we have
\[
\|f_n\|_{\mathcal{D}_{\alpha}}=1,\qquad
\|C_{\varphi_a}f_n\|_{\mathcal{D}_{\alpha}}=a^n\rightarrow0.
\]
Thus $C_{\varphi_a}$ is not bounded below and its range is not closed.
Also, $\overline{\varphi_a(\mathbb{D})}$ misses $\mathbb{T}$, and every
fixed-radius Bergman disk sufficiently near $\mathbb{T}$ misses the
image. Hence the Reverse Carleson Disk Condition and the area density
condition fail as well.
\end{example}

\begin{example}
\label{ex2.17} Let $\alpha>-1$ and let
\[
\varphi(z)=z^{m}, \qquad m\in\mathbb{N}.
\]
Then $\varphi$ is a nonconstant analytic self-map of $\mathbb{D}$ satisfying
$\varphi(0)=0$. For $w\in\mathbb{D}$, counting preimages with multiplicity,
\[
N_{\varphi,\alpha}(w) = m\bigl(1-|w|^{2/m}\bigr)^{\alpha}.
\]
Put $t=|w|^{2/m}$. Since
\[
1-|w|^{2} = 1-t^{m} = (1-t)(1+t+\cdots+t^{m-1})
\]
and
\[
1\leq1+t+\cdots+t^{m-1}\leq m,
\]
there exist constants $0<c_{m,\alpha}\leq C_{m,\alpha}<\infty$ such that
\[
c_{m,\alpha}(1-|w|^{2})^{\alpha}\leq N_{\varphi,\alpha}(w) \leq C_{m,\alpha
}(1-|w|^{2})^{\alpha}, \qquad w\in\mathbb{D}.
\]
Consequently,
\[
\frac{c_{m,\alpha}}{1+\alpha} \leq\widetilde\mu_{\varphi,\alpha,r}(z)
\leq\frac{C_{m,\alpha}}{1+\alpha}, \qquad z\in\mathbb{D},\quad r>0.
\]
By the change of variables formula, these bounds imply
\[
\|C_{\varphi}f\|_{\mathcal{D}_{\alpha}}^2
\asymp\|f\|_{\mathcal{D}_{\alpha}}^2,
\qquad f\in\mathcal{D}_{\alpha}.
\]
Thus $C_{\varphi}$ is bounded and has closed range. Since
$\varphi(\mathbb{D})=\mathbb{D}$, its image is simply connected. In
particular, for $-1<\alpha<0$ this gives a class for which all five
conditions in Theorem~\ref{thm2.14} hold.
\end{example}

\begin{example}
\label{ex2.18} The boundedness assumption of $C_{\varphi}$ in Lemma~\ref{lem2.13}
cannot be omitted, even when the image is simply connected. Let
$-1<\alpha<0$ and define
\[
H = \left\{  w\in\mathbb{D}: \left|  w-\frac12\right|  \leq\frac18 \right\}  ,
\qquad\Gamma=\left[  \frac58,1\right)  ,
\]
and set
\[
\Omega=\mathbb{D}\setminus(H\cup\Gamma).
\]

\begin{tikzpicture}[
x=3.0cm,
y=3.0cm,
every node/.style={font=\small}
]
% Unit disk
\fill[gray!4] (0,0) circle[radius=1];
\draw[thick] (0,0) circle[radius=1];
% Real axis
\draw[gray!60] (-1.05,0)--(1.05,0);
% Removed disk H
\fill[gray!30] (0.5,0) circle[radius=0.125];
\draw[thick] (0.5,0) circle[radius=0.125];
% Removed slit Gamma
\draw[line width=1.1pt] (0.625,0)--(0.995,0);
% Points
\fill (0,0) circle[radius=0.009];
\fill (0.5,0) circle[radius=0.009];
% Labels
\node at (-0.30,0.40) {$\Omega$};
\node[above=3pt] at (0.50,0.125) {$H$};
\node[above=3pt] at (0.80,0) {$\Gamma$};
% Axis labels
\node[left=2pt] at (-1,0) {$-1$};
\node[right=2pt] at (1,0) {$1$};
% Number labels
\node[font=\tiny, below=8pt] at (0,0) {$0$};
% inside H: slightly to the right
\node[font=\fontsize{4}{4}\selectfont] at (0.525,-0.055)
{$\frac12$};
% outside H: move slightly left
\node[font=\fontsize{4}{4}\selectfont] at (0.675,-0.055)
{$\frac58$};
\end{tikzpicture}

\smallskip

{\small The domain $\Omega=\mathbb{D}-\left(  H\cup\Gamma\right)  $, where $H$
is the shaded disk and $\Gamma$ is the radial slit.}

Since $\Gamma$ joins $H$ to the boundary of $\mathbb{D}$, the domain $\Omega$
is simply connected, and $0\in\Omega$. By the Riemann mapping theorem there
exists a conformal map
\[
\varphi:\mathbb{D}\longrightarrow\Omega
\]
with $\varphi(0)=0$. Thus $\varphi$ is a univalent analytic self-map of
$\mathbb{D}$ and $\varphi(\mathbb{D})=\Omega$.

We first show that the Reverse Carleson Disk Condition holds. Set
\[
M_{H}=\max_{\zeta\in H}\beta(0,\zeta)<\infty
\]
and choose $R_{0}>M_{H}$. For $z\in\mathbb{D}$, define
\[
\Theta(z) = \frac{A_{\alpha}(D(z,R_{0})\setminus H)}{A_{\alpha}(D(z,R_{0}))}.
\]
We claim that $\Theta(z)>0$ for every $z\in\mathbb{D}$. If $z\notin H$, then
$D(z,R_{0})\setminus H$ contains a neighborhood of $z$. If $z\in H$, then
$\beta(z,0)\leq M_{H}<R_{0}$, so that $0\in D(z,R_{0})\setminus H$; hence
$D(z,R_{0})\setminus H$ has positive weighted area. For any sequence $z_n\to z$ in $\mathbb{D}$, we have $\chi_{D(z_n,R_0)}\to\chi_{D(z,R_0)}$ outside $\partial D(z,R_0)$, which has zero $A_{\alpha}$-measure. Since the characteristic functions are bounded by $1$ and $A_{\alpha}(\mathbb{D})<\infty$, the dominated convergence theorem implies that $\Theta$ is continuous. Moreover, if $\beta(0,z)>M_{H}+R_{0}$, then $D(z,R_{0})\cap
H=\emptyset$, and hence $\Theta(z)=1$. Thus $\Theta$ need only be considered
on the compact Bergman disk
\[
\{z\in\mathbb{D}:\beta(0,z)\leq M_{H}+R_{0}\}.
\]

It follows that there exists $c>0$ such that
\[
A_{\alpha}(D(z,R_{0})\setminus H) \geq cA_{\alpha}(D(z,R_{0})), \qquad
z\in\mathbb{D}.
\]
Since $\Gamma$ has planar area zero,
\[
A_{\alpha}(D(z,R_{0})\cap\Omega) = A_{\alpha}(D(z,R_{0})\setminus H).
\]

For $w\in\Omega$, let $\zeta=\varphi^{-1}(w)$. Since $\varphi(0)=0$, Schwarz's
lemma gives
\[
|w|=|\varphi(\zeta)|\leq|\zeta|.
\]
As $\varphi$ is univalent,
\[
N_{\varphi,\alpha}(w)=(1-|\zeta|^{2})^{\alpha}.
\]
Since $-1<\alpha<0$, we obtain
\[
N_{\varphi,\alpha}(w) \geq(1-|w|^{2})^{\alpha}, \qquad w\in\Omega.
\]
Therefore,
\begin{align*}
\mu_{\varphi,\alpha}(D(z,R_{0}))  &  = \int_{D(z,R_{0})\cap\Omega}%
N_{\varphi,\alpha}(w)\,dA(w)\\
&  \geq\int_{D(z,R_{0})\cap\Omega}(1-|w|^{2})^{\alpha}\,dA(w)\\
&  = \frac{1}{1+\alpha}A_{\alpha}(D(z,R_{0})\cap\Omega)\\
&  \geq\frac{c}{1+\alpha}A_{\alpha}(D(z,R_{0})).
\end{align*}
Thus $\mu_{\varphi,\alpha}$ satisfies the Reverse Carleson Disk
Condition, and $\Omega$ has uniform weighted area density at the radius
$R_0$.

The construction also gives
\[
\overline{\Omega}=\overline{\mathbb{D}}\setminus\operatorname{int}H.
\]
In particular, $\mathbb{T}\subset\overline{\Omega}$, and
\[
\mathbb{D}\setminus\overline{\mathbb{D}_{r_0}}\subset\overline{\Omega},
\qquad r_0\in(5/8,1).
\]
However, no outer annulus
is contained in $\Omega$, because the slit $\Gamma$ reaches the unit circle.
Moreover, $\Omega$ is not dense in $\mathbb{D}$, since it omits the interior
of $H$.

If $C_{\varphi}$ were bounded on $\mathcal{D}_{\alpha}$, the univalence of
$\varphi$ and the inclusion $\mathbb{T}\subset\overline{\varphi(\mathbb{D})}$
would imply, by Lemma~\ref{lem2.9}, that $\varphi$ is an automorphism. This
contradicts $\varphi(\mathbb{D})=\Omega\ne\mathbb{D}$. Hence
$C_{\varphi}$ is unbounded. Since $\Omega$ is simply connected, it
satisfies the Compact-Hole Condition with $U=\Omega$ and $K=\emptyset$.
The example therefore shows that the Compact-Hole Condition and the
Reverse Carleson Disk Condition do not ensure boundedness, and that
boundedness is essential for the actual annulus inclusion in
Lemma~\ref{lem2.13}.
\end{example}

\subsection{Uniform Tail Condition of $\varphi$ and closed range of
$C_{\varphi}$}\mbox{}

We now obtain a characterization without imposing the Compact-Hole
Condition on the image. The Uniform Tail Condition controls the contribution of large values of
$N_{\varphi,\alpha}(w)/(1-|w|^{2})^{\alpha}$ uniformly over the unit sphere
of $\mathcal{D}_{\alpha,0}$.

Under this condition, the Reverse Carleson Disk Condition yields a weighted
area density estimate for $\varphi(\mathbb{D})$, leading to another
characterization of closed range of $C_{\varphi}$.

\begin{definition}[Uniform Tail Condition]
\label{def2.19}
Let $\alpha>-1$, and let $\varphi$ be a nonconstant analytic self-map of
$\mathbb{D}$ satisfying $\varphi(0)=0$. For $M>0$, define
\[
B_{M}=\left\{w\in\varphi(\mathbb{D}):
0<\frac{N_{\varphi,\alpha}(w)}{(1-|w|^{2})^{\alpha}}\leq M\right\}
\]
and
\[
E_{M}=\left\{w\in\varphi(\mathbb{D}):
\frac{N_{\varphi,\alpha}(w)}{(1-|w|^{2})^{\alpha}}>M\right\}.
\]
We say that $\varphi$ satisfies the Uniform Tail Condition if
\begin{equation}
\label{eq:UTC}
\lim_{M\to\infty}
\sup_{\substack{f\in\mathcal{D}_{\alpha,0}\\
\|f\|_{\mathcal{D}_{\alpha}}=1}}
\int_{E_{M}}|f^{\prime}(w)|^{2}
N_{\varphi,\alpha}(w)\,dA(w)=0.
\tag{UTC}
\end{equation}
The integrals and the supremum are understood in $[0,+\infty]$.
The boundedness of $C_{\varphi}$ follows from this condition by
Lemma~\ref{lem2.21}.
\end{definition}

\begin{lemma}
\label{lem2.20} Let $\alpha>-1$, and let $\varphi$ be a nonconstant analytic
self-map of $\mathbb{D}$ satisfying $\varphi(0)=0$. Suppose that there are
constants $\delta>0$ and $r>0$ such that
\[
\mu_{\varphi,\alpha}(D(z,r)) \geq\delta A_{\alpha}(D(z,r)), \qquad
z\in\mathbb{D},
\]
and assume that $\varphi$ satisfies the Uniform Tail Condition
\eqref{eq:UTC}. Then there exists $C>0$ such that
\[
A_{\alpha}(\varphi(\mathbb{D})\cap D(z,r)) \geq C A_{\alpha}(D(z,r)), \qquad
z\in\mathbb{D}.
\]

\end{lemma}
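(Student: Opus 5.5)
The plan is to split the mass $\mu_{\varphi,\alpha}(D(z,r))$ according to the size of the normalized density
\[
\psi(w)=\frac{N_{\varphi,\alpha}(w)}{(1-|w|^{2})^{\alpha}} .
\]
The part where $\psi$ is bounded is controlled by the weighted area of $\varphi(\mathbb{D})\cap D(z,r)$. The part where $\psi$ is large is shown to be uniformly small by the Uniform Tail Condition, tested against normalized reproducing kernels.

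\emph{Splitting.} Since $N_{\varphi,\alpha}=0$ off $\varphi(\mathbb{D})$ and $N_{\varphi,\alpha}>0$ on $\varphi(\mathbb{D})$, we have $\varphi(\mathbb{D})=B_M\cup E_M$ for every $M>0$. Hence
\[
\mu_{\varphi,\alpha}(D(z,r))=\int_{D(z,r)\cap B_M}N_{\varphi,\alpha}\,dA+\int_{D(z,r)\cap E_M}N_{\varphi,\alpha}\,dA .
\]
On $B_M$ we have $N_{\varphi,\alpha}(w)\le M(1-|w|^{2})^{\alpha}$. So the first term is at most
\[
\frac{M}{1+\alpha}\,A_{\alpha}\bigl(\varphi(\mathbb{D})\cap D(z,r)\bigr).
\]

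\emph{The tail term.} Fix $z$ and put $f_z(\zeta)=\int_0^\zeta\kappa_z^{\alpha}(\xi)\,d\xi$. Then $f_z\in\mathcal{D}_{\alpha,0}$ and, because differentiation is unitary onto $A^2_\alpha$, $\|f_z\|_{\mathcal{D}_\alpha}=\|\kappa_z^\alpha\|_{A^2_\alpha}=1$. As in the proof of Lemma~\ref{lem2.3}, for $w\in D(z,r)$ we have
\[
|\kappa_z^{\alpha}(w)|^{2}\asymp\frac{1}{(1-|z|^{2})^{\alpha+2}}\asymp\frac{1}{A_{\alpha}(D(z,r))},
\]
with constants depending only on $r$ and $\alpha$. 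Therefore
\[
\int_{D(z,r)\cap E_M}N_{\varphi,\alpha}\,dA\le C_r\,A_{\alpha}(D(z,r))\int_{E_M}|f_z'(w)|^{2}N_{\varphi,\alpha}(w)\,dA(w)\le C_r\,\varepsilon(M)\,A_{\alpha}(D(z,r)).
\]
Here $\varepsilon(M)$ denotes the supremum appearing in \eqref{eq:UTC}, which tends to $0$ as $M\to\infty$. The constant $C_r$ is independent of $z$; this uniformity is the only point needing care. It follows from the standard estimate $|1-\overline zw|\asymp 1-|z|^2$ on $D(z,r)$, which depends only on $r$.

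\emph{Conclusion.} Choose $M$ so large that $C_r\varepsilon(M)\le\delta/2$. Combining the splitting with the Reverse Carleson Disk Condition gives
\[
\delta A_{\alpha}(D(z,r))\le \frac{M}{1+\alpha}A_{\alpha}\bigl(\varphi(\mathbb{D})\cap D(z,r)\bigr)+\frac{\delta}{2}A_{\alpha}(D(z,r)).
\]
This yields the claim with $C=\delta(1+\alpha)/(2M)$. Note that no boundedness of $C_\varphi$ is needed separately: the tail estimate only uses that $\varepsilon(M)$ is finite and small for large $M$.
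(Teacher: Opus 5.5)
Your proof is correct and is essentially the paper's argument. Both split $\varphi(\mathbb{D})$ into $B_M$ and $E_M$, control the $E_M$ piece by testing \eqref{eq:UTC} on the primitive of the normalized kernel $\kappa_z^{\alpha}$, and bound the $B_M$ piece by $\frac{M}{1+\alpha}A_{\alpha}(\varphi(\mathbb{D})\cap D(z,r))$. The only cosmetic difference is that the paper inserts the kernel weight before splitting, which costs an extra factor $c_2$ on the $B_M$ term. You split first and use the kernel only on the tail, which gives the slightly cleaner constant $\delta(1+\alpha)/(2M)$.
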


\begin{proof}
Let
\[
\kappa_{z}^{\alpha}(w) = \frac{(1-|z|^{2})^{(2+\alpha)/2}} {(1-\overline
zw)^{2+\alpha}}
\]
be the normalized reproducing kernel of $A_{\alpha}^{2}$, and put
\[
F_{z}(\xi)=\int_{0}^{\xi}\kappa_{z}^{\alpha}(w)\,dw.
\]
Then $F_{z}\in\mathcal{D}_{\alpha,0}$ and $\|F_{z}\|_{\mathcal{D}_{\alpha}}%
=1$. Since $r$ is fixed, there are constants $c_{1},c_{2}>0$, independent of
$z$, such that
\[
\frac{c_{1}}{A_{\alpha}(D(z,r))} \leq|\kappa_{z}^{\alpha}(w)|^{2} \leq
\frac{c_{2}}{A_{\alpha}(D(z,r))}, \qquad w\in D(z,r).
\]
Consequently, there exists a constant $C_{1}>0$ independent of $z$ and $M$ such that
\begin{align*}
\delta &  \leq\frac{1}{A_{\alpha}(D(z,r))} \int_{D(z,r)\cap\varphi
(\mathbb{D})} N_{\varphi,\alpha}(w)\,dA(w)\\
&  \leq C_{1}\int_{D(z,r)\cap\varphi(\mathbb{D})} |\kappa_{z}^{\alpha}%
(w)|^{2}N_{\varphi,\alpha}(w)\,dA(w)\\
&  \leq C_{1}\sup_{\substack{f\in\mathcal{D}_{\alpha,0}\\\|f\|_{\mathcal{D}%
_{\alpha}}=1}} \int_{E_{M}}|f^{\prime}(w)|^{2}N_{\varphi,\alpha}(w)\,dA(w)\\
&  \quad+ \frac{C_{1}c_{2}}{A_{\alpha}(D(z,r))} \int_{D(z,r)\cap B_{M}%
}N_{\varphi,\alpha}(w)\,dA(w).
\end{align*}
Choose $M_{0}>0$ so large that the first term in the last expression is at
most $\delta/2$. For $w\in B_{M_{0}}$,
\[
N_{\varphi,\alpha}(w) \leq M_{0}(1-|w|^{2})^{\alpha}.
\]
Hence
\[
\int_{D(z,r)\cap B_{M_{0}}}N_{\varphi,\alpha}(w)\,dA(w) \leq\frac{M_{0}%
}{1+\alpha} A_{\alpha}(D(z,r)\cap\varphi(\mathbb{D})).
\]
It follows that
\[
A_{\alpha}(D(z,r)\cap\varphi(\mathbb{D})) \geq\frac{(1+\alpha)\delta}%
{2C_{1}c_{2}M_{0}} A_{\alpha}(D(z,r)), \qquad z\in\mathbb{D}.
\]
This proves the assertion.
\end{proof}

\begin{lemma}
\label{lem2.21} Let $\alpha>-1$, and let $\varphi$ be a nonconstant analytic
self-map of $\mathbb{D}$ satisfying $\varphi(0)=0$. If the Uniform Tail
Condition holds, then $C_{\varphi}$ is bounded on $\mathcal{D}_{\alpha}$.
\end{lemma}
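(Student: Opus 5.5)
The plan is to split the integral in the change-of-variables formula along the partition $\mathbb{D}=B_M\cup E_M\cup Z$, where $Z=\{w: N_{\varphi,\alpha}(w)=0\}$ contributes nothing. For $f\in\mathcal{D}_{\alpha,0}$ we have, with values in $[0,+\infty]$,
\[
\|C_\varphi f\|_{\mathcal{D}_\alpha}^2=(1+\alpha)\int_{B_M}|f'|^2N_{\varphi,\alpha}\,dA+(1+\alpha)\int_{E_M}|f'|^2N_{\varphi,\alpha}\,dA .
\]
The area formula holds for every analytic $f$ with both sides possibly infinite, as noted in the Introduction, so boundedness is not needed to write this identity.

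On $B_M$ we use the definition directly: $N_{\varphi,\alpha}(w)\le M(1-|w|^2)^\alpha$. This gives
\[
(1+\alpha)\int_{B_M}|f'|^2N_{\varphi,\alpha}\,dA\le M\int_{\mathbb{D}}|f'|^2\,dA_\alpha= M\|f\|_{\mathcal{D}_\alpha}^2 .
\]
For the $E_M$ term, \eqref{eq:UTC} lets us choose $M_0$ so large that the supremum over the unit sphere of $\mathcal{D}_{\alpha,0}$ of $\int_{E_{M_0}}|f'|^2N_{\varphi,\alpha}\,dA$ is at most $1$. The main point to check is that this supremum is finite for some $M$. That is exactly what the limit being $0$ in $[0,+\infty]$ means: the supremum is eventually finite, and in fact eventually small. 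By homogeneity, $\int_{E_{M_0}}|f'|^2N_{\varphi,\alpha}\,dA\le \|f\|_{\mathcal{D}_\alpha}^2$ for all $f\in\mathcal{D}_{\alpha,0}$.

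Combining the two pieces gives $\|C_\varphi f\|^2\le (M_0+1+\alpha)\|f\|^2$ on $\mathcal{D}_{\alpha,0}$; in particular $C_\varphi f\in\mathcal{D}_\alpha$. For general $f\in\mathcal{D}_\alpha$, write $f=f(0)+(f-f(0))$. Since $\varphi(0)=0$, we get $C_\varphi f=f(0)+C_\varphi(f-f(0))$, and $C_\varphi(f-f(0))$ vanishes at $0$, so it lies in $\mathcal{D}_{\alpha,0}$. This is an orthogonal decomposition, so
\[
\|C_\varphi f\|^2=|f(0)|^2+\|C_\varphi(f-f(0))\|^2\le \max(1,M_0+1+\alpha)\|f\|^2 .
\]
Hence $C_\varphi$ is bounded. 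I expect no real obstacle beyond the measure-theoretic remark that the identity and the splitting are valid in the extended sense before finiteness is known.
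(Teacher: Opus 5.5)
Your proof is correct and follows the paper's argument. You split the change-of-variables integral over $B_{M_0}$ and $E_{M_0}$, bound the first piece using $N_{\varphi,\alpha}\le M_0(1-|w|^2)^\alpha$ and the second using the Uniform Tail Condition with $M_0$ chosen so the supremum is at most $1$, and then pass from $\mathcal{D}_{\alpha,0}$ to $\mathcal{D}_\alpha$ via $C_\varphi=I_{\mathbb{C}}\oplus C_{\varphi,0}$. Your remarks on the extended-valued area formula and on the null set $\mathbb{D}\setminus\varphi(\mathbb{D})$ only make explicit what the paper uses implicitly.
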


\begin{proof}
Choose $M_{0}>0$ such that
\[
\sup_{\substack{f\in\mathcal{D}_{\alpha,0}\\\|f\|_{\mathcal{D}_{\alpha}}=1}}
\int_{E_{M_{0}}} |f^{\prime}(w)|^{2}N_{\varphi,\alpha}(w)\,dA(w) \leq1.
\]
For $f\in\mathcal{D}_{\alpha,0}$, we have
\begin{align*}
\int_{\mathbb{D}}|f^{\prime}(w)|^{2}N_{\varphi,\alpha}(w)\,dA(w)  &  = \int
_{B_{M_{0}}}|f^{\prime}(w)|^{2}N_{\varphi,\alpha}(w)\,dA(w)\\
&  \quad+ \int_{E_{M_{0}}}|f^{\prime}(w)|^{2}N_{\varphi,\alpha}(w)\,dA(w)\\
&  \leq\frac{M_{0}}{1+\alpha} \|f\|_{\mathcal{D}_{\alpha}}^{2} +
\|f\|_{\mathcal{D}_{\alpha}}^{2}.
\end{align*}
Thus $C_{\varphi,0}$ is bounded on $\mathcal{D}_{\alpha,0}$. Since
\[
C_{\varphi}= I_{\mathbb{C}}\oplus C_{\varphi,0},
\]
the operator $C_{\varphi}$ is bounded on $\mathcal{D}_{\alpha}$.
\end{proof}

\begin{theorem}
\label{thm2.22} Let $-1<\alpha<0$, and let $\varphi$ be a nonconstant analytic
self-map of $\mathbb{D}$ satisfying $\varphi(0)=0$. Suppose that $\varphi$
satisfies the Uniform Tail Condition. Then the following statements are equivalent:

\begin{enumerate}
[label=\textup{(\roman*)}]

\item $C_{\varphi}$ has closed range on $\mathcal{D}_{\alpha}$.

\item $\mu_{\varphi,\alpha}$ satisfies the Reverse Carleson Disk Condition;
that is, there exist constants $\delta>0$ and $r>0$ such that
\[
\mu_{\varphi,\alpha}(D(z,r)) \geq\delta A_{\alpha}(D(z,r)), \qquad
z\in\mathbb{D}.
\]

\item There exist constants $\delta>0$ and $r>0$ such that
\[
A_{\alpha}(\varphi(\mathbb{D})\cap D(z,r)) \geq\delta A_{\alpha}(D(z,r)),
\qquad z\in\mathbb{D}.
\]

\item $\mu_{\varphi,\alpha}$ is a Reverse Carleson measure for $A_{\alpha}%
^{2}$.
\end{enumerate}
\end{theorem}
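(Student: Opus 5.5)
We will prove the cycle of implications \textup{(i)}$\Rightarrow$\textup{(ii)}$\Rightarrow$\textup{(iii)}$\Rightarrow$\textup{(iv)}$\Rightarrow$\textup{(i)}, using the Uniform Tail Condition only at the step \textup{(ii)}$\Rightarrow$\textup{(iii)}. Throughout, boundedness of $C_{\varphi}$ on $\mathcal{D}_{\alpha}$ is available from Lemma~\ref{lem2.21}, since \eqref{eq:UTC} is assumed. Hence $\varphi$ is a normalized self-map of $\mathbb{D}$ in $\mathcal{D}_{\alpha}$, and all earlier results stated for such maps apply.

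For \textup{(i)}$\Rightarrow$\textup{(ii)} we invoke Theorem~\ref{thm2.4} directly. It requires only $\alpha>-1$ and boundedness, both of which hold here.

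For \textup{(ii)}$\Rightarrow$\textup{(iii)} we apply Lemma~\ref{lem2.20}. That lemma takes exactly the Reverse Carleson Disk Condition together with \eqref{eq:UTC} and produces the weighted area density of $\varphi(\mathbb{D})$ at the same radius $r$. This is the only step where the tail hypothesis is used, and it is where the substantive work lies. The point is that the Disk Condition alone could be satisfied by a small image on which $N_{\varphi,\alpha}$ is very large. The tail condition excludes this: testing against the integrated normalized kernels $F_{z}$ shows that the mass of $\mu_{\varphi,\alpha}$ in $D(z,r)$ cannot be concentrated on $E_{M}$. Since this is already proved in Lemma~\ref{lem2.20}, no new obstacle arises; we only check that its hypotheses match ours, namely $\alpha>-1$, $\varphi(0)=0$, and $\varphi$ nonconstant.

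For \textup{(iii)}$\Rightarrow$\textup{(iv)} we use Lemma~\ref{lem2.11}. It needs $-1<\alpha\le0$, so that Lemma~\ref{lem2.1} gives $N_{\varphi,\alpha}(w)\ge(1-|w|^{2})^{\alpha}$ on $\varphi(\mathbb{D})$. Combined with Luecking's dominating set theorem (Lemma~\ref{lem2.10}), this makes $\mu_{\varphi,\alpha}$ a Reverse Carleson measure. Finally, \textup{(iv)}$\Rightarrow$\textup{(i)} is Proposition~\ref{prop2.2}, again using the boundedness from Lemma~\ref{lem2.21}.

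The restriction $-1<\alpha<0$ enters only through Lemma~\ref{lem2.11}. We note that, unlike Theorem~\ref{thm2.14}, no geometric annulus condition is claimed, so the proof does not pass through Lemma~\ref{lem2.13}. With all pieces already established, the proof is a short assembly of these results.
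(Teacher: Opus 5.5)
Your proposal is correct and matches the paper's proof exactly. Boundedness comes from Lemma~\ref{lem2.21}, and the cycle closes via Theorem~\ref{thm2.4} for \textup{(i)}$\Rightarrow$\textup{(ii)}, Lemma~\ref{lem2.20} for \textup{(ii)}$\Rightarrow$\textup{(iii)}, Lemma~\ref{lem2.11} for \textup{(iii)}$\Rightarrow$\textup{(iv)}, and Proposition~\ref{prop2.2} for \textup{(iv)}$\Rightarrow$\textup{(i)}. As the paper itself notes after the proof, Lemma~\ref{lem2.11} actually needs only $\alpha\le 0$, so the strict inequality $\alpha<0$ is a choice of focus rather than a requirement of this argument.
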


\begin{proof}
By Lemma~\ref{lem2.21}, $C_{\varphi}$ is bounded on $\mathcal{D}_{\alpha}$.

Theorem~\ref{thm2.4} gives \textup{(i)}$\Rightarrow$\textup{(ii)}. Under the
Uniform Tail hypothesis, Lemma~\ref{lem2.20} gives \textup{(ii)}$\Rightarrow
$\textup{(iii)}. Lemma~\ref{lem2.11} gives \textup{(iii)}$\Rightarrow
$\textup{(iv)}, while Proposition~\ref{prop2.2} gives \textup{(iv)}%
$\Rightarrow$\textup{(i)}. Therefore all four conditions are equivalent.
\end{proof}
The proofs of Lemmas~\ref{lem2.20} and~\ref{lem2.21} require only
$\alpha>-1$, whereas the counting-function estimate used in
Lemma~\ref{lem2.11} relies on $\alpha\leq0$. Here we retain the range
$-1<\alpha<0$ to focus on singular weights.

The Uniform Tail hypothesis is genuinely nontrivial: the normalized counting
function may be unbounded while the contribution of its high level sets still
tends uniformly to zero. We return to the dilation in
Example~\ref{ex2.16} to illustrate this point.

\begin{example}
\label{ex2.23} Let $-1<\alpha<0$, fix $a\in(0,1)$, and consider
\[
\varphi_{a}(z)=az, \qquad z\in\mathbb{D}.
\]
We show that $\varphi_{a}$ satisfies the Uniform Tail Condition. Since
$\varphi_{a}$ is univalent and $\varphi_{a}(\mathbb{D})=a\mathbb{D}$,
\[
N_{\varphi_{a},\alpha}(w) = \left(  1-\frac{|w|^{2}}{a^{2}}\right)  ^{\alpha},
\qquad|w|<a.
\]
Hence
\[
\frac{N_{\varphi_{a},\alpha}(w)}{(1-|w|^{2})^{\alpha}} = \left(
\frac{1-|w|^{2}/a^{2}}{1-|w|^{2}} \right)  ^{\alpha}.
\]
For $M>1$, put $\varepsilon_{M}=M^{1/\alpha}$. Since $\alpha<0$, we have $\varepsilon_{M}\to0$ as $M\to\infty$. A direct calculation gives
\[
E_{M} = \{w\in\mathbb{D}:r_{M}<|w|<a\},
\]
where
\[
r_{M}^{2} = a^{2}\frac{1-\varepsilon_{M}}{1-a^{2}\varepsilon_{M}}.
\]
In particular, $r_{M}\to a$ as $M\to\infty$.

Let $f\in\mathcal{D}_{\alpha,0}$ with $\|f\|_{\mathcal{D}_{\alpha}}=1$, and
put $g=f^{\prime}$. Then $\|g\|_{A_{\alpha}^{2}}=1$. By the point evaluation
estimate for $A_{\alpha}^{2}$,
\[
|f^{\prime}(w)|^{2}=|g(w)|^{2} \leq\frac{1}{(1-|w|^{2})^{\alpha+2}} \leq\frac
{1}{(1-a^{2})^{\alpha+2}}, \qquad|w|\leq a.
\]
Therefore,
\begin{align*}
 \sup_{\substack{f\in\mathcal{D}_{\alpha,0}\\\|f\|_{\mathcal{D}_{\alpha}}%
=1}} \int_{E_{M}}|f^{\prime}(w)|^{2}N_{\varphi_{a},\alpha}(w)\,dA(w)
&\lesssim\int_{E_{M}}N_{\varphi_{a},\alpha}(w)\,dA(w)\\
\qquad&= 2\int_{r_{M}}^{a} \left(  1-\frac{r^{2}}{a^{2}}\right)  ^{\alpha
}r\,dr\\
\qquad&=\frac{a^{2}}{\alpha+1} \left(  1-\frac{r_{M}^{2}}{a^{2}}\right)
^{\alpha+1}\\
\qquad&=\frac{a^{2}}{\alpha+1} \left(  \frac{(1-a^{2})\varepsilon_{M}%
}{1-a^{2}\varepsilon_{M}} \right)  ^{\alpha+1}\\
\qquad&\lesssim M^{(\alpha+1)/\alpha}.
\end{align*}
Since $\frac{\alpha+1}{\alpha}<0$, the right-hand side tends to zero as $M\to\infty$. Thus $\varphi_{a}$
satisfies the Uniform Tail Condition.

This example also clarifies the role of the hypothesis in
Theorem~\ref{thm2.22}. Indeed,
\[
C_{\varphi_{a}}z^{n}=a^{n}z^{n},
\]
so $C_{\varphi_{a}}$ is not bounded below and hence does not have closed
range. At the same time, $\varphi_{a}(\mathbb{D})=a\mathbb{D}$ fails the
uniform area density condition near the boundary. For every fixed $r>0$, if
$|z|$ is sufficiently close to $1$, then $D(z,r)\cap a\mathbb{D}=\emptyset$,
and hence
\[
\mu_{\varphi_{a},\alpha}(D(z,r))=0.
\]
Thus the Reverse Carleson Disk Condition in Theorem~\ref{thm2.22} also fails.
Therefore, the Uniform Tail Condition is a background hypothesis for the
equivalence in Theorem~\ref{thm2.22}, rather than a closed range criterion by itself.
\end{example}

\begin{remark}
\label{re2.24} The preceding example shows that the Uniform Tail Condition
alone does not imply closed range. In contrast, the power maps considered in
Example~\ref{ex2.17} provide a simple class for which the uniform tail
condition and all the equivalent conditions in Theorem~\ref{thm2.22} hold.

Indeed, let
\[
\varphi(z)=z^{m}, \qquad m\geq2.
\]
For $w\in\mathbb{D}$,
\[
N_{\varphi,\alpha}(w)=m\bigl(1-|w|^{2/m}\bigr)^{\alpha}.
\]
Writing $t=|w|^{2/m}$, we obtain
\[
\frac{N_{\varphi,\alpha}(w)} {(1-|w|^{2})^{\alpha}}=m(1+t+\cdots
+t^{m-1})^{-\alpha}.
\]
Since $1\leq1+t+\cdots+t^{m-1}\leq m$, it follows that
\[
m \leq\frac{N_{\varphi,\alpha}(w)} {(1-|w|^{2})^{\alpha}} \leq m^{1-\alpha}.
\]
Consequently,
\[
E_{M}=\emptyset\qquad\text{whenever }M>m^{1-\alpha},
\]
so the Uniform Tail Condition holds trivially.

Moreover, $\varphi(\mathbb{D})=\mathbb{D}$, and hence
\[
A_{\alpha}\bigl(\varphi(\mathbb{D})\cap D(z,r)\bigr)= A_{\alpha}(D(z,r)),
\qquad z\in\mathbb{D}.
\]
Also,
\[
N_{\varphi,\alpha}(w) \geq m(1-|w|^{2})^{\alpha}.
\]
Hence, for every $r>0$,
\[
\mu_{\varphi,\alpha}(D(z,r)) \geq\frac{m}{1+\alpha}A_{\alpha}(D(z,r)), \qquad
z\in\mathbb{D},
\]
so the Reverse Carleson Disk Condition in Theorem~\ref{thm2.22} holds (in
fact, at every radius). The same estimate shows that $\mu_{\varphi,\alpha}$ is
a Reverse Carleson measure for $A_{\alpha}^{2}$. Thus $C_{\varphi}$ has closed
range by Proposition~\ref{prop2.2}.
\end{remark}

\begin{corollary}
\label{cor2.25}
Let $-1<\alpha<0$, and let $\Phi$ be a nonconstant analytic self-map
of $\mathbb{D}$. Set
\[
\varphi=\sigma_{\Phi(0)}\circ\Phi.
\]
Suppose that $\varphi$ satisfies the Uniform Tail Condition.
Then the range of $C_{\Phi}$ is closed in $\mathcal{D}_{\alpha}$
if and only if the equivalent conditions \textup{(ii)--(iv)} in
Theorem~\ref{thm2.22} hold for $\varphi$.
\end{corollary}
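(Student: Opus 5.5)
The plan is to reduce Corollary~\ref{cor2.25} to Theorem~\ref{thm2.22} through the normalization described in the Introduction. Put $a=\Phi(0)$ and $\varphi=\sigma_a\circ\Phi$, so that $\varphi(0)=0$. Since $\sigma_a$ is an involution, $\Phi=\sigma_a\circ\varphi$, which gives the factorizations
\[
C_\Phi=C_\varphi C_{\sigma_a}, \qquad C_\varphi=C_\Phi C_{\sigma_a}.
\]

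The first step is boundedness. By hypothesis $\varphi$ satisfies the Uniform Tail Condition, so Lemma~\ref{lem2.21} shows that $C_\varphi$ is bounded on $\mathcal{D}_\alpha$. The automorphism operator $C_{\sigma_a}$ is bounded and invertible on $\mathcal{D}_\alpha$, with $C_{\sigma_a}^{-1}=C_{\sigma_a}$. Hence $C_\Phi=C_\varphi C_{\sigma_a}$ is bounded as well. This settles the one genuine issue: the corollary does not assume $C_\Phi$ bounded, and the tail condition on $\varphi$ supplies it.

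The second step transfers closed range. Because $C_{\sigma_a}$ is a bijection of $\mathcal{D}_\alpha$ onto itself, $C_\Phi$ and $C_\varphi$ have the same range:
\[
\operatorname{ran} C_\Phi = C_\varphi\bigl(C_{\sigma_a}\mathcal{D}_\alpha\bigr)=C_\varphi\mathcal{D}_\alpha=\operatorname{ran}C_\varphi .
\]
So one range is closed exactly when the other is. Alternatively, both operators are injective, so closed range is equivalent to being bounded below, and that property passes through composition with an invertible operator.

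Finally, Theorem~\ref{thm2.22} applies to $\varphi$, which is a nonconstant self-map with $\varphi(0)=0$ satisfying the Uniform Tail Condition. It shows that $C_\varphi$ has closed range if and only if conditions (ii)--(iv) hold for $\varphi$, and this combined with the second step gives the corollary.

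There is no real obstacle here. The only point needing care is checking that boundedness of $C_\Phi$ follows from the hypotheses, which the first step handles.
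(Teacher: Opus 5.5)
Your proposal is correct and follows the paper's own proof: both factor $C_\Phi=C_\varphi C_{\sigma_{\Phi(0)}}$ with $C_{\sigma_{\Phi(0)}}$ bounded and invertible, and then apply Theorem~\ref{thm2.22} to the normalized symbol $\varphi$. The paper leaves two points implicit that you state explicitly: boundedness of $C_\Phi$ follows from Lemma~\ref{lem2.21}, and the ranges of $C_\Phi$ and $C_\varphi$ coincide as sets.
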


\begin{proof}
Since $C_{\sigma_{\Phi(0)}}$ is bounded and invertible on
$\mathcal{D}_{\alpha}$ and
$C_{\Phi}=C_{\varphi}C_{\sigma_{\Phi(0)}}$, the conclusion follows
from Theorem~\ref{thm2.22}.
\end{proof}
\section{Essential norms of $C_{\varphi}$}
In this section, $\mathcal{D}_{\alpha}$ continues to denote the weighted
Dirichlet space, while $\mathcal{D}_{\alpha,0}$ denotes the subspace of functions
vanishing at the origin. We first transfer the essential norm problem
to positive Toeplitz operators on $A_\alpha^2$ and obtain a general
local average estimate and an exact boundary tail formula. We then
derive exact local average formulas under a boundary oscillation
condition on the counting density.

For a bounded operator $T:H_1\to H_2$ between Hilbert spaces, its
essential norm is defined by
\[
\|T\|_e=\inf\{\|T-K\|:K:H_1\to H_2\text{ is compact}\}.
\]
We denote the inner product on $A_{\alpha}^{2}$ by
\[
\langle f,g\rangle_{\alpha}
=\int_{\mathbb{D}}f(w)\overline{g(w)}\,dA_{\alpha}(w),
\qquad f,g\in A_{\alpha}^{2}.
\]
\begin{definition}
\label{def3.1} Let $\alpha>-1$. Define
\[
U_{\alpha}:\mathcal{D}_{\alpha,0}\longrightarrow A_{\alpha}^{2}, \qquad
U_{\alpha}f=f^{\prime}.
\]
For an analytic self-map $\varphi$ of $\mathbb{D}$, define
\[
\widetilde C_{\varphi}= P_{0}C_{\varphi}\big|_{\mathcal{D}_{\alpha,0}},
\qquad\widetilde C_{\varphi}f = f\circ\varphi-f(\varphi(0)),
\]
and
\[
D_{\varphi}g = (g\circ\varphi)\varphi^{\prime}, \qquad g\in A_{\alpha}^{2}.
\]

\end{definition}

\begin{proposition}
\label{prop3.2} Let $\alpha>-1$, and let $\varphi:\mathbb{D}\to\mathbb{D}$ be
a nonconstant analytic self-map. Then
\[
C_{\varphi}\text{ is bounded on }\mathcal{D}_{\alpha}\quad\Longleftrightarrow
\quad\widetilde C_{\varphi}\text{ is bounded on }\mathcal{D}_{\alpha,0}
\quad\Longleftrightarrow\quad D_{\varphi}\text{ is bounded on }A_{\alpha}%
^{2}.
\]
Whenever these conditions hold,
\[
U_{\alpha}\widetilde C_{\varphi}U_{\alpha}^{*}=D_{\varphi}%
\]
and
\[
\|C_{\varphi}\|_{e} = \|\widetilde C_{\varphi}\|_{e} = \|D_{\varphi}\|_{e}.
\]

\end{proposition}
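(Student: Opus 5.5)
The plan is to split $C_\varphi$ with respect to $\mathcal{D}_\alpha=\mathbb{C}\oplus\mathcal{D}_{\alpha,0}$ as a $2\times2$ operator matrix, to show that every entry other than the $(2,2)$ entry $\widetilde C_\varphi$ has rank at most one, and then to transport $\widetilde C_\varphi$ to $A^2_\alpha$ using the unitary $U_\alpha$.

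Write $f=f(0)+f_0$ with $f_0\in\mathcal{D}_{\alpha,0}$. Then
\[
C_\varphi f=f(0)+\bigl(f_0(\varphi(0))\bigr)+\widetilde C_\varphi f_0 ,
\]
where the first two terms are constants. The map $f_0\mapsto f_0(\varphi(0))$ is a bounded functional, because point evaluation is bounded on $\mathcal{D}_\alpha$. Hence
\[
C_\varphi=\begin{pmatrix} 1 & \delta_{\varphi(0)}\\ 0 & \widetilde C_\varphi\end{pmatrix}.
\]
The constants are mapped to constants, so the $(2,1)$ entry is zero, and the remaining entries other than $\widetilde C_\varphi$ are bounded of rank at most one.

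From this matrix, $C_\varphi$ is bounded exactly when $\widetilde C_\varphi$ is bounded. The operators $C_\varphi$ and $0\oplus\widetilde C_\varphi$ differ by a finite-rank operator, so their essential norms agree. Moreover $\|0\oplus T\|_e=\|T\|_e$, which one sees by compressing compact perturbations with $P_0$ in one direction and extending them by $0$ in the other. This gives $\|C_\varphi\|_e=\|\widetilde C_\varphi\|_e$.

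Next I identify $U_\alpha\widetilde C_\varphi U_\alpha^*$ with $D_\varphi$. Take $g\in A^2_\alpha$. Then $U_\alpha^*g=G$ with $G(z)=\int_0^z g$, and
\[
\widetilde C_\varphi G=G\circ\varphi-G(\varphi(0)), \qquad (G\circ\varphi)'=(g\circ\varphi)\varphi' .
\]
So $U_\alpha\widetilde C_\varphi U_\alpha^*g=D_\varphi g$, as an identity between analytic functions, for every $g\in A^2_\alpha$.

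For boundedness, if $\widetilde C_\varphi$ is bounded then $D_\varphi$ is a unitary conjugate of it and so is bounded. Conversely, suppose $D_\varphi$ is bounded on $A^2_\alpha$. Then $\widetilde C_\varphi=U_\alpha^*D_\varphi U_\alpha$ on $\mathcal{D}_{\alpha,0}$. This holds because $G\circ\varphi-G(\varphi(0))$ is the unique function vanishing at $0$ whose derivative is $D_\varphi g$, and that function is exactly $U_\alpha^*D_\varphi g$. Thus $\widetilde C_\varphi$ is bounded.

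Essential norms are invariant under unitary conjugation, because $K\mapsto U_\alpha KU_\alpha^*$ is a bijection of the compact operators. This gives $\|\widetilde C_\varphi\|_e=\|D_\varphi\|_e$.

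The only step needing care is the converse in the boundedness chain, where one must make sure the series of identities holds without assuming finiteness a priori. The argument above handles this, since $U_\alpha^*D_\varphi g$ is defined directly and coincides with $\widetilde C_\varphi G$ pointwise. Everything else is bookkeeping with finite-rank perturbations.
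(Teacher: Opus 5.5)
Your proposal is correct and follows essentially the same route as the paper: the block decomposition of $C_\varphi$ relative to $\mathbb{C}\oplus\mathcal{D}_{\alpha,0}$ with the bounded rank-one corner $f_0\mapsto f_0(\varphi(0))$, the finite-rank perturbation argument for $\|C_\varphi\|_e=\|\widetilde C_\varphi\|_e$, and unitary conjugation by $U_\alpha$ to identify $\widetilde C_\varphi$ with $D_\varphi$. You also supply two details the paper leaves implicit: the identity $\|0\oplus T\|_e=\|T\|_e$ across the two spaces, and why the converse boundedness direction needs no a priori finiteness.
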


\begin{proof}
The operator $U_{\alpha}$ is unitary because
\[
\|f\|_{\mathcal{D}_{\alpha}}^{2} = \int_{\mathbb{D}}|f^{\prime}(z)|^{2}\,dA_{\alpha
}(z), \qquad f\in\mathcal{D}_{\alpha,0},
\]
and
\[
(U_{\alpha}^{*}g)(z)=\int_{0}^{z} g(\zeta)\,d\zeta.
\]
For $g\in A_{\alpha}^{2}$,
\begin{align*}
(U_{\alpha}\widetilde C_{\varphi}U_{\alpha}^{*}g)(z)  &  = \frac{d}{dz}
\left[  (U_{\alpha}^{*}g)(\varphi(z))-(U_{\alpha}^{*}g)(\varphi(0))\right] \\
&  = g(\varphi(z))\varphi^{\prime}(z) = D_{\varphi}g(z).
\end{align*}
Thus $\widetilde C_{\varphi}$ is bounded if and only if $D_{\varphi}$ is bounded.

Relative to $\mathcal{D}_{\alpha}=\mathbb{C}\oplus\mathcal{D}_{\alpha,0}$, we
have
\[
C_{\varphi}=
\begin{pmatrix}
I_{\mathbb{C}} & \Lambda_{\varphi(0)}\\
0 & \widetilde C_{\varphi}%
\end{pmatrix}
, \qquad\Lambda_{\varphi(0)}g=g(\varphi(0)).
\]
Point evaluation is bounded on $\mathcal{D}_{\alpha}$. Hence this block
representation shows both that boundedness of $C_{\varphi}$ implies
boundedness of $\widetilde C_{\varphi}$ and that the converse holds.
Moreover,
\[
C_{\varphi}-
\begin{pmatrix}
0 & 0\\
0 & \widetilde C_{\varphi}%
\end{pmatrix}
=
\begin{pmatrix}
I_{\mathbb{C}} & \Lambda_{\varphi(0)}\\
0 & 0
\end{pmatrix}
\]
has finite rank. Therefore
\[
\|C_{\varphi}\|_{e}=\|\widetilde C_{\varphi}\|_{e}.
\]
The unitary equivalence gives $\|\widetilde C_{\varphi}\|_{e}=\|D_{\varphi
}\|_{e}$, completing the proof.
\end{proof}

Let $\mu$ be a positive Borel measure on $\mathbb{D}$ such that the embedding
\[
A_{\alpha}^{2}\hookrightarrow L^{2}(\mu)
\]
is bounded. The Toeplitz operator $T_{\mu}$ is the unique bounded positive
operator on $A_{\alpha}^{2}$ satisfying
\[
\langle T_{\mu}f,g\rangle_{A_{\alpha}^{2}} = \int_{\mathbb{D}}f(w)\overline
{g(w)}\,d\mu(w), \qquad f,g\in A_{\alpha}^{2}.
\]
When $d\mu=q\,dA_\alpha$, we write $T_q=T_\mu$.

\begin{lemma}
\label{lem3.3} Let $\alpha>-1$, and let $\varphi:\mathbb{D}\to\mathbb{D}$ be a
nonconstant analytic self-map. Suppose that $C_{\varphi}$ is bounded on
$\mathcal{D}_{\alpha}$. Define
\[
\psi(w) = \frac{N_{\varphi,\alpha}(w)} {(1-|w|^{2})^{\alpha}}, \qquad
w\in\mathbb{D},
\]
and let $\widehat{\mu}_{\varphi,\alpha}$ be the positive Borel measure given
by
\[
d\widehat{\mu}_{\varphi,\alpha}(w) = \psi(w)\,dA_{\alpha}(w).
\]
Then
\[
D_{\varphi}^{*}D_{\varphi}= T_{\widehat{\mu}_{\varphi,\alpha}} \quad\text{on
}A_{\alpha}^{2}.
\]
Consequently,
\[
\|D_{\varphi}\|_{e}^{2} = \|T_{\widehat{\mu}_{\varphi,\alpha}}\|_{e}.
\]

\end{lemma}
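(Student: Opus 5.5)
We prove the identity $D_{\varphi}^{*}D_{\varphi}=T_{\widehat{\mu}_{\varphi,\alpha}}$ by comparing sesquilinear forms on $A_{\alpha}^{2}$. The essential norm formula then follows from the $C^{*}$-identity in the Calkin algebra.

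\emph{Step 1: the measure is Carleson.} For $g\in A_{\alpha}^{2}$, put $f=U_{\alpha}^{*}g$. The change of variables formula, applied to $f\circ\varphi$ with the weight $(1-|z|^{2})^{\alpha}$ and without needing $\varphi(0)=0$ since only derivatives enter, gives
\[
\|D_{\varphi}g\|_{A_{\alpha}^{2}}^{2}=(1+\alpha)\int_{\mathbb{D}}|g(\varphi(z))|^{2}|\varphi'(z)|^{2}(1-|z|^{2})^{\alpha}\,dA(z)=(1+\alpha)\int_{\mathbb{D}}|g(w)|^{2}N_{\varphi,\alpha}(w)\,dA(w).
\]
Since $(1+\alpha)N_{\varphi,\alpha}\,dA=\psi\,dA_{\alpha}=d\widehat{\mu}_{\varphi,\alpha}$, this reads
\[
\|D_{\varphi}g\|_{A_{\alpha}^{2}}^{2}=\int_{\mathbb{D}}|g|^{2}\,d\widehat{\mu}_{\varphi,\alpha}.
\]
By Proposition~\ref{prop3.2}, $D_{\varphi}$ is bounded, so $A_{\alpha}^{2}\hookrightarrow L^{2}(\widehat{\mu}_{\varphi,\alpha})$ is bounded and $T_{\widehat{\mu}_{\varphi,\alpha}}$ is well defined.

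\emph{Step 2: polarization.} Both $(f,g)\mapsto\langle D_{\varphi}^{*}D_{\varphi}f,g\rangle_{\alpha}=\langle D_{\varphi}f,D_{\varphi}g\rangle_{\alpha}$ and $(f,g)\mapsto\int f\bar g\,d\widehat{\mu}_{\varphi,\alpha}$ are bounded sesquilinear forms, and by Step 1 their quadratic forms agree. Polarization shows the forms themselves agree. Uniqueness in the definition of $T_{\mu}$ then gives $D_{\varphi}^{*}D_{\varphi}=T_{\widehat{\mu}_{\varphi,\alpha}}$.

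The only delicate point is the justification of the area formula with the mixed expression $|g\circ\varphi|^{2}|\varphi'|^{2}(1-|z|^{2})^{\alpha}$. This is the nonnegative-integrand version already invoked in the Introduction, with $|g|^{2}$ in place of $|f'|^{2}$. Since $g=f'$ for $f=U_{\alpha}^{*}g$, it is literally the same formula.

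\emph{Step 3: the essential norm.} Let $\pi$ be the quotient map onto the Calkin algebra. Then
\[
\|D_{\varphi}\|_{e}^{2}=\|\pi(D_{\varphi})\|^{2}=\|\pi(D_{\varphi})^{*}\pi(D_{\varphi})\|=\|\pi(D_{\varphi}^{*}D_{\varphi})\|=\|T_{\widehat{\mu}_{\varphi,\alpha}}\|_{e},
\]
using the $C^{*}$-identity in the Calkin algebra together with the fact that the essential norm equals the quotient norm.
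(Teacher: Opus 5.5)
Your proof is correct and follows the paper's argument: the area formula gives $\|D_{\varphi}g\|_{A_{\alpha}^{2}}^{2}=\int_{\mathbb{D}}|g|^{2}\,d\widehat{\mu}_{\varphi,\alpha}$, which shows the embedding is bounded; the two sesquilinear forms then coincide; and the $C^{*}$-identity in the Calkin algebra gives the essential norm formula. The only difference is that you identify the forms by polarization from this nonnegative quadratic identity, whereas the paper applies the change of variables directly to the complex integrand $f(\varphi)\overline{g(\varphi)}|\varphi'|^{2}(1-|z|^{2})^{\alpha}$; your version is a slightly cleaner justification of that step.
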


\begin{proof}
By Proposition~\ref{prop3.2}, the operator $D_{\varphi}$ is bounded on
$A_{\alpha}^{2}$. Since
\[
dA_{\alpha}(w) = (1+\alpha)(1-|w|^{2})^{\alpha}\,dA(w),
\]
we have
\begin{align*}
d\widehat{\mu}_{\varphi,\alpha}(w)  &  = \psi(w)\,dA_{\alpha}(w)\\
&  = \frac{N_{\varphi,\alpha}(w)} {(1-|w|^{2})^{\alpha}} (1+\alpha
)(1-|w|^{2})^{\alpha}\,dA(w)\\
&  = (1+\alpha)N_{\varphi,\alpha}(w)\,dA(w).
\end{align*}

For every $g\in A_\alpha^2$, the area formula gives
\[
\int_{\mathbb{D}}|g(w)|^2\,d\widehat{\mu}_{\varphi,\alpha}(w)
=\|D_\varphi g\|_{A_\alpha^2}^2
\leq\|D_\varphi\|^2\|g\|_{A_\alpha^2}^2.
\]
Thus $\widehat{\mu}_{\varphi,\alpha}$ induces a bounded embedding, and the associated positive Toeplitz operator is well defined.

For $f,g\in A_{\alpha}^{2}$, by the definition of $T_{\widehat{\mu}%
_{\varphi,\alpha}}$,
\begin{align*}
\left\langle T_{\widehat{\mu}_{\varphi,\alpha}}f,g \right\rangle _{\alpha}  &
= \int_{\mathbb{D}} f(w)\overline{g(w)} \,d\widehat{\mu}_{\varphi,\alpha}(w)\\
&  = (1+\alpha) \int_{\mathbb{D}} f(w)\overline{g(w)} N_{\varphi,\alpha
}(w)\,dA(w).
\end{align*}

On the other hand, by the definition of $D_{\varphi}$ and the change of
variables formula for the generalized Nevanlinna counting function,
\begin{align*}
\left\langle D_{\varphi}^{*}D_{\varphi}f,g \right\rangle _{\alpha}  &  =
\left\langle D_{\varphi}f,D_{\varphi}g \right\rangle _{\alpha}\\
&  = \int_{\mathbb{D}} f(\varphi(z)) \overline{g(\varphi(z))} |\varphi^{\prime}(z)|^{2}\,dA_{\alpha}(z)\\
&  = (1+\alpha) \int_{\mathbb{D}} f(\varphi(z)) \overline{g(\varphi(z))}
|\varphi^{\prime}(z)|^{2}(1-|z|^{2})^{\alpha}\,dA(z)\\
&  = (1+\alpha) \int_{\mathbb{D}} f(w)\overline{g(w)} N_{\varphi,\alpha
}(w)\,dA(w).
\end{align*}
Therefore,
\[
\left\langle D_{\varphi}^{*}D_{\varphi}f,g \right\rangle _{\alpha}=
\left\langle T_{\widehat{\mu}_{\varphi,\alpha}}f,g \right\rangle _{\alpha}%
\]
for all $f,g\in A_{\alpha}^{2}$, and hence
\[
D_{\varphi}^{*}D_{\varphi}= T_{\widehat{\mu}_{\varphi,\alpha}}.
\]

It remains to prove the equality of the essential norms. Since $\mathcal{K}%
(A_{\alpha}^{2})$, the set of compact operators on $A_{\alpha}^{2}$, is a
closed two-sided $*$-ideal in $\mathcal{B}(A_{\alpha}^{2})$ (the set of
bounded operators on $A_{\alpha}^{2}$), the quotient $\mathcal{B}(A_{\alpha
}^{2})/\mathcal{K}(A_{\alpha}^{2}) $ is a $C^{*}$-algebra. Let
\[
\pi:\mathcal{B}(A_{\alpha}^{2}) \longrightarrow\mathcal{B}(A_{\alpha}%
^{2})/\mathcal{K}(A_{\alpha}^{2})
\]
be the canonical quotient map. Then
\[
\|D_{\varphi}\|_{e} = \|\pi(D_{\varphi})\|.
\]
By the $C^{*}$-identity and the fact that $\pi$ is a $*$-homomorphism,
\begin{align*}
\|D_{\varphi}\|_{e}^{2}  &  = \|\pi(D_{\varphi})\|^{2} = \|\pi(D_{\varphi
})^{*}\pi(D_{\varphi})\|\\
&  = \|\pi(D_{\varphi}^{*}D_{\varphi})\| = \|D_{\varphi}^{*}D_{\varphi}\|_{e}
= \|T_{\widehat{\mu}_{\varphi,\alpha}}\|_{e}.
\end{align*}

This completes the proof.
\end{proof}

Let $dA$ denote the normalized Lebesgue measure on $\mathbb{D}$, let
$H(\mathbb{D})$ be the space of all holomorphic functions on $\mathbb{D}$, and
let
\[
L_{a}^{2}(\mathbb{D},dA) = L^{2}(\mathbb{D},dA)\cap H(\mathbb{D})
\]
be the Bergman space on $\mathbb{D}$. Its reproducing kernel is given by
\[
K_{z}(w) = \frac{1}{(1-\overline zw)^{2}}, \qquad z,w\in\mathbb{D}.
\]

For $\gamma\in\mathbb{R}$, define the measure
\[
dV_{\gamma}(z) = K_{z}(z)^{-\gamma}\,dA(z) = (1-|z|^{2})^{2\gamma}\,dA(z).
\]
Then the weighted Bergman space
\[
L_{a}^{2}(\mathbb{D},dV_{\gamma}) = L^{2}(\mathbb{D},dV_{\gamma})\cap
H(\mathbb{D})
\]
is nontrivial whenever
\[
\gamma>-\frac12.
\]
The reproducing kernel of $L_{a}^{2}(\mathbb{D},dV_{\gamma})$ is
\[
K_{z}^{\gamma}(w) = \frac{2\gamma+1} {(1-\overline zw)^{2+2\gamma}}, \qquad
z,w\in\mathbb{D}.
\]

The following lemma is the unit disk case of S. Yamaji's essential norm
estimate \cite[Theorem~B]{MR3127818} for positive Toeplitz operators.
\begin{lemma}
\label{lem3.4} Let $\gamma>-\frac12$, and let $\mu$ be a positive Borel
measure on $\mathbb{D}$. Suppose that the Toeplitz operator $T_{\mu}$ is
bounded on $L_{a}^{2}(\mathbb{D},dV_{\gamma})$. Then, for every fixed $r>0$,
\[
\|T_{\mu}\|_{e} \asymp\limsup_{|z|\to1^{-}}\widetilde{\mu}_{r}(z),
\]
where
\[
\widetilde{\mu}_{r}(z) = \frac{\mu(D(z,r))} {V_{\gamma}(D(z,r))}
\]
is the averaging function of $\mu$.
\end{lemma}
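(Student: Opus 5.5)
The paper presents this lemma as a special case of Yamaji's theorem, so the shortest route is a direct reduction. I would still give a self-contained proof of the two inequalities, since that makes clear what the constants depend on. Throughout, $r>0$ is fixed, and constants may depend on $r$ and $\gamma$. Two standard facts are used. On $D(z,r)$ we have $|1-\overline z w|\asymp 1-|z|^2$. Hence $V_\gamma(D(z,r))\asymp(1-|z|^2)^{2+2\gamma}$, and for the normalized kernel $k_z=K^\gamma_z/\|K^\gamma_z\|$ we have $|k_z(w)|^2\asymp (1-|z|^2)^{-(2+2\gamma)}$ on $D(z,r)$.

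\emph{Lower bound.} Since $k_z\to0$ weakly as $|z|\to1^-$, every compact $K$ satisfies $\|Kk_z\|\to0$. Therefore
\[
\|T_\mu-K\|\ge\limsup_{|z|\to1^-}\langle (T_\mu-K)k_z,k_z\rangle=\limsup_{|z|\to1^-}\int_{\mathbb D}|k_z|^2\,d\mu .
\]
Restricting the integral to $D(z,r)$ and using the kernel estimate gives $\int_{\mathbb D}|k_z|^2\,d\mu\gtrsim\widetilde\mu_r(z)$. Taking the infimum over compact $K$ yields $\|T_\mu\|_e\gtrsim\limsup_{|z|\to1^-}\widetilde\mu_r(z)$.

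\emph{Upper bound.} For $0<t<1$, write $\mu=\mu_t+\nu_t$ with $\mu_t=\mu|_{\mathbb D_t}$. The operator $T_{\mu_t}$ is compact, because $\mu_t$ has compact support in $\mathbb D$ and the embedding of $L^2_a(dV_\gamma)$ into $L^2(\mu_t)$ is compact by normal-family arguments. Hence
\[
\|T_\mu\|_e\le\|T_{\nu_t}\|=\sup_{\|f\|=1}\int_{|w|>t}|f|^2\,d\mu .
\]
This sup is controlled by the Carleson constant of $\nu_t$. Fix an $s$-lattice $\{a_k\}$ with $s$ small relative to $r$, so that $D(a_k,s)\subset D(w,r)$ for every $w\in D(a_k,s)$. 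Apply the subharmonic mean value estimate on each $D(a_k,s)$, as in the proof of Lemma~\ref{lem2.3}, and use the bounded overlap \eqref{eq:2.5}. This gives
\[
\int_{|w|>t}|f|^2\,d\mu\lesssim\Big(\sup_{|a_k|>t'}\frac{\mu(D(a_k,s))}{V_\gamma(D(a_k,s))}\Big)\|f\|^2 ,
\]
where $t'\to1$ as $t\to1$. Each lattice average is then dominated by averages at radius $r$. Since $D(a_k,s)\subset D(a_k,r)$ and $V_\gamma(D(a_k,s))\asymp V_\gamma(D(a_k,r))$, we get $\mu(D(a_k,s))/V_\gamma(D(a_k,s))\lesssim\widetilde\mu_r(a_k)$. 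Letting $t\to1$ gives $\|T_\mu\|_e\lesssim\limsup_{|z|\to1^-}\widetilde\mu_r(z)$.

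The main obstacle is bookkeeping the change of radii in the upper bound. The subharmonicity estimate produces disks of a different radius than the fixed $r$, and the resulting averages have to be compared back to $\widetilde\mu_r$ with constants independent of $t$. Choosing $s$ small relative to $r$ handles this: the small disk is contained in the larger one, and the two $V_\gamma$-volumes are comparable. If that comparison proves awkward, the fallback is to cite \cite[Theorem~B]{MR3127818} directly, specialized to the unit disk, which is how the paper treats the lemma.
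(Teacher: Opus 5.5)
Your proof is correct, but it follows a different route from the paper. The paper does not prove the estimate at all. It quotes Yamaji's Theorem~B for minimal bounded homogeneous domains and checks that, on the disk, Yamaji's weighted measure and metric differ from $dV_\gamma$ and $\beta$ only by constant factors and a rescaling of the radius. It then converts Yamaji's closed-ball statement to open Bergman disks using $\overline{D(z,r/2)}\subset D(z,r)\subset\overline{D(z,r)}$ together with $V_\gamma(D(z,r/2))\asymp V_\gamma(D(z,r))$.

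You instead reprove the disk case directly:
\begin{itemize}
\item The lower bound comes from the weakly null normalized kernels: the Berezin transform of $\mu$ dominates $\widetilde\mu_r$ because $|k_z|^2\asymp V_\gamma(D(z,r))^{-1}$ on $D(z,r)$.
\item The upper bound comes from discarding the compact interior piece $T_{\mu_t}$ and controlling the tail $\|T_{\nu_t}\|$ with the lattice and submean-value argument of Lemma~\ref{lem2.3}.
\end{itemize}

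What your route buys: it is self-contained and uses only mechanisms the paper develops anyway. The tail bound parallels Proposition~\ref{prop3.6}, and the kernel lower bound parallels Theorem~\ref{thm3.8}. It works with open disks from the start, so no closed/open conversion is needed, and it shows explicitly that the constants depend only on $r$ and $\gamma$. The paper's citation is shorter and places the lemma in Yamaji's general framework, but the reader has to trust the normalization matching.

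Two small points to tighten:
\begin{itemize}
\item Compactness of $T_{\mu_t}$ uses finiteness of $\mu_t$. This follows from $\mu(\mathbb{D})=\langle T_\mu 1,1\rangle<\infty$, since $1\in L^2_a(\mathbb{D},dV_\gamma)$ when $\gamma>-\frac12$.
\item The containment actually needed in the subharmonicity step is $D(w,s)\subset D(a_k,2s)$ for $w\in D(a_k,s)$, together with bounded overlap of the enlarged disks as in \eqref{eq:2.5} and $s\le r$. It is not $D(a_k,s)\subset D(w,r)$.
\end{itemize}
With, say, $s=r/2$ the radius bookkeeping goes through with constants independent of $t$, so your fallback to the citation is unnecessary.
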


\begin{proof}
The unit disk is a minimal bounded homogeneous domain, and S. Yamaji's weighted measure on the disk is a positive constant multiple of $dV_\gamma$ for every $\gamma>-\frac12$. The area normalization changes only the comparison constants, while the distance normalization rescales the fixed radius. Thus \cite[Theorem~B]{MR3127818} gives the asserted essential norm estimate with closed balls for every fixed radius.

To pass to the open balls used here, observe that
\[
\overline{D(z,r/2)}\subset D(z,r)\subset\overline{D(z,r)}.
\]
For fixed $r>0$, the weighted volumes satisfy $V_\gamma(D(z,r/2))\asymp V_\gamma(D(z,r))$ uniformly in $z$, and each ball has the same weighted volume as its closure. Applying the closed-ball estimate at radii $r/2$ and $r$ therefore yields
\[
\|T_\mu\|_e\asymp\limsup_{|z|\to1^-}\frac{\mu(D(z,r))}{V_\gamma(D(z,r))}=\limsup_{|z|\to1^-}\widetilde{\mu}_r(z),
\]
as required.
\end{proof}

Set $\gamma=\alpha/2$. Then
\[
dA_{\alpha}=(1+\alpha)dV_{\gamma}, \qquad V_{\gamma}(E)=\frac{1}{1+\alpha
}A_{\alpha}(E).
\]
Let
\[
d\nu_{\varphi,\alpha}(w) = \psi(w)\,dV_{\gamma}(w) = N_{\varphi,\alpha
}(w)\,dA(w).
\]
The reproducing kernels for $L_{a}^{2}(\mathbb{D},dV_{\gamma})$ and
$A_{\alpha}^{2}$ differ by the factor $1+\alpha$. Consequently, under the
canonical identification of these two Hilbert spaces, the Toeplitz operator
$T_{\nu_{\varphi,\alpha}}$ on $L_{a}^{2}(\mathbb{D},dV_{\gamma})$ is the same
linear operator as $T_{\widehat\mu_{\varphi,\alpha}}$ on $A_{\alpha}^{2}$.
Indeed,
\begin{align*}
T_{\nu_{\varphi,\alpha}}f(z)  &  = \int_{\mathbb{D}} \frac{1+\alpha
}{(1-z\overline w)^{2+\alpha}} f(w)\psi(w)\,dV_{\gamma}(w)\\
&  = \int_{\mathbb{D}} \frac{f(w)}{(1-z\overline w)^{2+\alpha}} \,d\widehat
\mu_{\varphi,\alpha}(w)\\
&  = T_{\widehat\mu_{\varphi,\alpha}}f(z).
\end{align*}

\medskip Moreover,
\[
\frac{\nu_{\varphi,\alpha}(D(z,r))}{V_{\gamma}(D(z,r))} = (1+\alpha)
\frac{\displaystyle\int_{D(z,r)}N_{\varphi,\alpha}(w)\,dA(w)} {A_{\alpha
}(D(z,r))}.
\]
Since the constant factor $1+\alpha$ is harmless in a two-sided estimate,
Lemma~\ref{lem3.4} applies to $T_{\widehat\mu_{\varphi,\alpha}}$ and yields
the following theorem.

\begin{theorem}
\label{thm3.5} Let $\alpha>-1$, and let $\varphi:\mathbb{D}\to\mathbb{D}$ be a
nonconstant analytic self-map. Suppose that $C_{\varphi}$ is bounded on
$\mathcal{D}_{\alpha}$. Then, for every fixed $r>0$,
\[
\|C_{\varphi}\|_{e}^{2} = \|\widetilde C_{\varphi}\|_{e}^{2} = \|D_{\varphi
}\|_{e}^{2} = \|T_{\widehat{\mu}_{\varphi,\alpha}}\|_{e} \asymp\limsup
_{|z|\to1^{-}} \frac{ \displaystyle
\int_{D(z,r)} N_{\varphi,\alpha}(w)\,dA(w) }{
A_{\alpha}(D(z,r)) }.
\]
The constants in this estimate depend only on $\alpha$ and $r$,
and are independent of $z$.
\end{theorem}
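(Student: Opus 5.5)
The proof is a chain of identifications that have already been established, followed by an application of Lemma~\ref{lem3.4}. We will proceed in three steps.

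First, since $C_\varphi$ is bounded on $\mathcal{D}_\alpha$, Proposition~\ref{prop3.2} gives that $\widetilde C_\varphi$ and $D_\varphi$ are bounded, with
\[
\|C_\varphi\|_e=\|\widetilde C_\varphi\|_e=\|D_\varphi\|_e .
\]
Squaring yields the first two equalities. Lemma~\ref{lem3.3} then gives $D_\varphi^*D_\varphi=T_{\widehat\mu_{\varphi,\alpha}}$ on $A_\alpha^2$. Through the $C^*$-identity in the Calkin algebra, it also gives $\|D_\varphi\|_e^2=\|T_{\widehat\mu_{\varphi,\alpha}}\|_e$. This accounts for every equality in the displayed chain.

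Second, we transfer to Yamaji's setting with $\gamma=\alpha/2>-\tfrac12$. As computed just before the statement, $A_\alpha^2$ and $L_a^2(\mathbb{D},dV_\gamma)$ have the same elements, and their norms differ only by the constant factor $(1+\alpha)^{1/2}$. Under this identification, $T_{\nu_{\varphi,\alpha}}$ and $T_{\widehat\mu_{\varphi,\alpha}}$ are the same linear operator. Because the two norms are proportional, operator norms, compactness, and hence essential norms coincide for the same linear map. Therefore $T_{\nu_{\varphi,\alpha}}$ is bounded on $L_a^2(\mathbb{D},dV_\gamma)$ and $\|T_{\nu_{\varphi,\alpha}}\|_e=\|T_{\widehat\mu_{\varphi,\alpha}}\|_e$. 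Boundedness follows from step one together with the Carleson embedding shown in Lemma~\ref{lem3.3}.

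Third, we apply Lemma~\ref{lem3.4} with $\mu=\nu_{\varphi,\alpha}$ at the fixed radius $r$:
\[
\|T_{\nu_{\varphi,\alpha}}\|_e\asymp\limsup_{|z|\to1^-}\frac{\nu_{\varphi,\alpha}(D(z,r))}{V_\gamma(D(z,r))}=(1+\alpha)\limsup_{|z|\to1^-}\frac{\int_{D(z,r)}N_{\varphi,\alpha}\,dA}{A_\alpha(D(z,r))}.
\]
Here we use $d\nu_{\varphi,\alpha}=N_{\varphi,\alpha}\,dA$ and $V_\gamma=A_\alpha/(1+\alpha)$. The factor $1+\alpha$ is absorbed into the constants, which therefore depend only on $\gamma=\alpha/2$ and $r$, and not on $z$ or $\varphi$.

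The main point requiring care is the second step: making sure that the essential norm is invariant under the rescaling of the Hilbert space norm. This holds because the identity map between the two spaces is a scalar multiple of a unitary, so conjugating by it preserves both the operator norm and the ideal of compact operators. The remaining steps are direct citations of earlier results.
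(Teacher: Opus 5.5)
Your proposal is correct and follows the paper's proof essentially step for step. You use Proposition~\ref{prop3.2} for the first equalities, Lemma~\ref{lem3.3} (via the $C^*$-identity in the Calkin algebra) for $\|D_\varphi\|_e^2=\|T_{\widehat\mu_{\varphi,\alpha}}\|_e$, and Lemma~\ref{lem3.4} with $\gamma=\alpha/2$ after identifying $T_{\nu_{\varphi,\alpha}}$ with $T_{\widehat\mu_{\varphi,\alpha}}$. Your remark that rescaling the norm by $(1+\alpha)^{1/2}$ preserves operator norms and compactness only makes explicit the identification the paper states just before the theorem.
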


\begin{proof}
By Proposition~\ref{prop3.2},
\[
\|C_{\varphi}\|_{e} = \|\widetilde C_{\varphi}\|_{e} = \|D_{\varphi}\|_{e}.
\]
By Lemma~\ref{lem3.3},
\[
\|D_{\varphi}\|_{e}^{2} = \|T_{\widehat{\mu}_{\varphi,\alpha}}\|_{e}.
\]
The remaining two-sided estimate follows from Lemma~\ref{lem3.4} applied with
$\gamma=\alpha/2$.
\end{proof}

The same Toeplitz representation also gives an exact formula in terms
of the norm of the embedding restricted to an outer annulus.

\begin{proposition}
\label{prop3.6} Let $\alpha>-1$, and let $\varphi$ be a nonconstant
analytic self-map of $\mathbb{D}$ such that $C_\varphi$ is bounded on
$\mathcal{D}_\alpha$. Then
\begin{equation}
\label{eq:3.1}
\|C_\varphi\|_e^2
=(1+\alpha)\lim_{R\to1^-}
\sup_{\substack{g\in A_\alpha^2\\\|g\|_{A_\alpha^2}=1}}
\int_{|w|>R}|g(w)|^2N_{\varphi,\alpha}(w)\,dA(w).
\end{equation}
\end{proposition}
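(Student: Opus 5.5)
By Proposition~\ref{prop3.2} and Lemma~\ref{lem3.3}, $\|C_\varphi\|_e^2=\|T\|_e$ with $T=T_{\widehat\mu_{\varphi,\alpha}}$. The right-hand side of \eqref{eq:3.1} is $\lim_{R\to1^-}\|T_R\|$, where $T_R=T_{\chi_{\{|w|>R\}}\widehat\mu_{\varphi,\alpha}}$. Indeed, $d\widehat\mu_{\varphi,\alpha}=(1+\alpha)N_{\varphi,\alpha}\,dA$, and for a positive Toeplitz operator $\|T_\nu\|=\sup_{\|g\|=1}\int|g|^2\,d\nu$. The quantity $\|T_R\|$ is nonincreasing in $R$, so the limit exists. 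The plan is to prove $\|T\|_e=\lim_{R\to1^-}\|T_R\|$ by two inequalities.

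\textbf{Upper bound.} Write $T=T_R+T^R$, where $T^R$ is the Toeplitz operator of the restriction of $\widehat\mu_{\varphi,\alpha}$ to $\overline{\mathbb{D}_R}$. I claim $T^R$ is compact. The restricted measure has support in a compact subset of $\mathbb{D}$, and its total mass $(1+\alpha)\int_{|w|\le R}N_{\varphi,\alpha}\,dA$ is finite. This finiteness follows from boundedness applied to $g\equiv1$, which gives $\int_{\mathbb{D}}N_{\varphi,\alpha}\,dA<\infty$ (compare the proof of Theorem~\ref{thm2.6}). Let $g_n\to0$ weakly in $A_\alpha^2$. Then $g_n\to0$ uniformly on $\overline{\mathbb{D}_R}$, so $\langle T^Rg_n,g_n\rangle\to0$. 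Since $T^R\ge0$, this gives $\|(T^R)^{1/2}g_n\|\to0$, so $(T^R)^{1/2}$ and hence $T^R$ are compact. Therefore $\|T\|_e\le\|T_R\|$ for every $R$, and the upper bound follows.

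\textbf{Lower bound.} Let $K$ be compact and fix $R<1$. Choose unit vectors $g$ with $\int_{|w|>R}|g|^2\,d\widehat\mu_{\varphi,\alpha}$ close to $\|T_R\|$. The key step, and the main obstacle, is to make these test functions weakly null while keeping their mass on $\{|w|>R\}$. I would use the following device: for $R<R'<1$, the functions $g$ with $\int_{|w|>R'}|g|^2\,d\widehat\mu\ge\|T_{R'}\|-\varepsilon$ can be chosen to be tails. Concretely, for any $N$, replace $g$ by $g-P_Ng$, where $P_N$ is the projection onto polynomials of degree $<N$. The finite-rank operator $T_{R'}P_N$ is compact, and the argument of the upper bound shows that the mass of $|P_Ng|^2$ over the compact region is controlled.

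A cleaner route is to argue by contradiction. Suppose $\|T\|_e<\lim\|T_R\|-2\varepsilon$. Then there is a finite-rank projection $P$ onto the span of the first $N$ monomials with $\|(I-P)T(I-P)\|<\lim\|T_R\|-\varepsilon$. This uses the standard fact that for a positive operator, $\|T\|_e=\lim_N\|(I-P_N)T(I-P_N)\|$, which is valid since $P_N\to I$ strongly and compacts are approximated.

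Then pick unit $h_n\in(I-P)A_\alpha^2$ with $\sup_{|w|\le R_n}|h_n|\to0$. This is possible because on $(I-P_N)A_\alpha^2$ the point evaluations on $\overline{\mathbb{D}_R}$ have norm $\lesssim R^N N^{C}$, which tends to $0$ as $N\to\infty$.

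Finally, compare $\langle Th,h\rangle\ge\int_{|w|>R}|h|^2\,d\widehat\mu$ with $\|T_R\|$, using that $T_R-(I-P_N)T_R(I-P_N)$ has small norm up to a compact perturbation. Concretely, $\|T_R\|\le\|(I-P_N)T_R(I-P_N)\|+\|T_RP_N\|$-type terms. Here $P_NT_R$ is finite rank, but its norm is not small, so this is exactly the delicate point. I would handle it by noting that $\|T_R\|_e\le\|T_R\|$ and $\|T_R\|-\|T_R\|_e\to0$ as $R\to1$. The latter holds since $T_R-T_{R'}$ is compact with norm bounded by the mass near the annulus, hence $\lim\|T_R\|=\lim\|T_R\|_e=\|T\|_e$, the last equality because $T-T_R$ is compact. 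Verifying $\lim_R\|T_R\|\le\lim_R\|T_R\|_e$ via a diagonal choice of near-extremal functions pushed outward is where I expect the real work.
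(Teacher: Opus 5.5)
Your upper bound is correct and is essentially the paper's. The part of $\widehat\mu_{\varphi,\alpha}$ on $\overline{\mathbb{D}_R}$ is a finite measure with compact support, so $T-T_R$ is compact and $\|C_\varphi\|_e^2=\|T\|_e\le\|T_R\|$.

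The lower bound $\lim_{R\to1^-}\|T_R\|\le\|T\|_e$, however, is never proved. Your final reduction is circular. Because $T-T_R$ is compact, $\|T_R\|_e=\|T\|_e$ for every $R$, so the inequality $\lim_R\|T_R\|\le\lim_R\|T_R\|_e$ that you defer to ``the real work'' is exactly the statement to be shown. The supporting claim that $\|T_R\|-\|T_R\|_e\to0$, because $\|T_R-T_{R'}\|$ is bounded by the mass of the annulus, is also unfounded. The quantity $\|T_R-T_{R'}\|=\sup_{\|g\|=1}\int_{R<|w|\le R'}|g|^2\,d\widehat\mu_{\varphi,\alpha}$ is controlled by that mass only up to a factor $\sup_{|w|\le R'}|g(w)|^2$, which blows up as $R'\to1^-$. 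Even if this difference were small, it would not give the claim. Likewise, unit vectors $h_n\in(I-P_N)A^2_\alpha$ that are small on $\overline{\mathbb{D}_R}$ need not satisfy $\int_{|w|>R}|h_n|^2\,d\widehat\mu_{\varphi,\alpha}\approx\|T_R\|$. So they do not connect $\|T_R\|$ to $\|(I-P_N)T(I-P_N)\|$.

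The missing idea is to let the compact operator absorb the error, and to take the limits in the right order. The paper writes $T=J^*J$, where $J:A^2_\alpha\to L^2(\widehat\mu_{\varphi,\alpha})$ is the embedding. Then $\|T_R\|=\|M_RJ\|^2$, with $M_R$ multiplication by $\mathbf 1_{\{|w|>R\}}$. For every compact $K$ one has $\|M_RJ\|\le\|J-K\|+\|M_RK\|$. Since $M_R\to0$ strongly and $K$ is compact, $\|M_RK\|\to0$, whence $\lim_R\|M_RJ\|\le\|J\|_e=\|T\|_e^{1/2}$.

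Your own route can be repaired in the same spirit. You correctly have $\|T_R\|\le\|(I-P_N)T_R(I-P_N)\|+3\|T_RP_N\|$. Although $\|T_RP_N\|$ is not small for fixed $R$, it tends to $0$ as $R\to1^-$ for fixed $N$. Indeed, $\|T_RP_N\|\le\|T\|^{1/2}\|T_R^{1/2}P_N\|$, and $\|T_R^{1/2}P_N\|^2\le C_N\,\widehat\mu_{\varphi,\alpha}(\{|w|>R\})\to0$. This holds because vectors of norm at most one in the range of $P_N$ are uniformly bounded on $\overline{\mathbb{D}}$, and the measure is finite. Since $0\le T_R\le T$, you get $\lim_{R\to1^-}\|T_R\|\le\|(I-P_N)T(I-P_N)\|$ for each $N$. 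Your fact $\|T\|_e=\lim_N\|(I-P_N)T(I-P_N)\|$ then finishes the proof; it is correct and rests on the same mechanism, $\|(I-P_N)K\|\to0$ for compact $K$.
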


\begin{proof}
Write $\mu=\widehat\mu_{\varphi,\alpha}$ and consider the bounded
embedding $J:A_\alpha^2\to L^2(\mu)$, $Jg=g$. Since
$J^*J=T_\mu$, the $C^*$-identity for essential norms and
Theorem~\ref{thm3.5} give
\[
\|J\|_e^2=\|T_\mu\|_e=\|C_\varphi\|_e^2.
\]

For $0<R<1$, define $M_R:L^2(\mu)\to L^2(\mu)$ by
\[
(M_Rh)(w)=\mathbf1_{\{|w|>R\}}h(w),
\]
and set $J_R=M_RJ$. Let $\{g_n\}$ be a bounded sequence in $A_\alpha^2$. By Montel's
theorem, a subsequence $\{g_{n_j}\}$ converges locally uniformly in
$\mathbb{D}$ to a function $g$, and Fatou's lemma gives
$g\in A_\alpha^2$. Since $J$ is bounded,
$\mu(\mathbb{D})=\|J1\|_{L^2(\mu)}^2<\infty$. Hence
\[
\begin{aligned}
\|(J-J_R)(g_{n_j}-g)\|_{L^2(\mu)}^2
&=\int_{|w|\leq R}|g_{n_j}(w)-g(w)|^2\,d\mu(w)\\
&\leq\mu(\mathbb{D})
\sup_{|w|\leq R}|g_{n_j}(w)-g(w)|^2
\longrightarrow0.
\end{aligned}
\]
Thus $J-J_R$ is compact, and consequently
\[
\|J\|_e\leq\|J-(J-J_R)\|=\|J_R\|.
\]

Conversely, for every compact operator $K:A_\alpha^2\to L^2(\mu)$,
\[
\|J_R\|\leq\|M_R(J-K)\|+\|M_RK\|
\leq\|J-K\|+\|M_RK\|.
\]
As $R\to1^-$, the contractions $M_R$ converge strongly to zero.
This convergence is uniform on the relatively compact image of the
unit ball under $K$, so $\|M_RK\|\to0$. Since $\|J_R\|$ decreases
with $R$, taking the limit and then the infimum over $K$ yields
\[
\lim_{R\to1^-}\|J_R\|\leq\|J\|_e.
\]
Thus $\|J\|_e=\lim\limits_{R\to1^-}\|J_R\|$. This proves \eqref{eq:3.1}.
\end{proof}

We next obtain an exact local average formula under a vanishing
oscillation condition on the counting density.
Under this condition, boundedness of $C_\varphi$ ensures that the
density is bounded near the unit circle.

\begin{definition}
\label{def3.7}
Let $q:\mathbb{D}\to[0,+\infty]$ be a Borel function which is finite
on some outer annulus. We write
$q\in\mathrm{VO}_{\partial}(\mathbb{D})$ if
\begin{equation}
\label{eq:3.2}
\lim_{|z|\to1^-}\sup_{w\in D(z,s)}|q(w)-q(z)|=0
\end{equation}
for every fixed $s>0$. In this case, $q$ is said to have vanishing oscillation
at the boundary.
\end{definition}

For a nonnegative Borel function $q$ such that $q\,dA_\alpha$ induces
a bounded embedding of $A_\alpha^2$, write
\[
\mathcal{B}_\alpha q(z)
=\int_{\mathbb{D}}|\kappa_z^\alpha(w)|^2q(w)\,dA_\alpha(w),
\qquad
\kappa_z^\alpha(w)
=\frac{(1-|z|^2)^{(\alpha+2)/2}}{(1-\overline zw)^{\alpha+2}}.
\]
Thus $\mathcal{B}_\alpha q$ is the Berezin transform of the measure
$q\,dA_\alpha$ on $A_\alpha^2$.

\begin{theorem}
\label{thm3.8}
Let $\alpha>-1$, and let $\varphi$ be a nonconstant analytic
self-map of $\mathbb{D}$ such that $C_\varphi$ is bounded on
$\mathcal{D}_\alpha$. Suppose that
$\psi(w)=\frac{N_{\varphi,\alpha}(w)}{(1-|w|^2)^\alpha}
\in\mathrm{VO}_{\partial}(\mathbb{D})$. Then
\begin{equation}
\label{eq:3.3}
\|C_\varphi\|_e^2
=\limsup_{|z|\to1^-}\psi(z)
=\limsup_{|z|\to1^-}\mathcal{B}_\alpha\psi(z).
\end{equation}
Moreover, for every fixed $r>0$,
\begin{equation}
\label{eq:3.4}
\|C_\varphi\|_e^2
=(1+\alpha)\limsup_{|z|\to1^-}
\frac{\displaystyle\int_{D(z,r)}N_{\varphi,\alpha}(w)\,dA(w)}
{A_\alpha(D(z,r))}.
\end{equation}
\end{theorem}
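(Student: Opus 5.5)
Set $L=\limsup_{|z|\to1^-}\psi(z)$ and $\mu=\widehat\mu_{\varphi,\alpha}=\psi\,dA_\alpha$. By Proposition~\ref{prop3.2} and Lemma~\ref{lem3.3} it suffices to show $\|T_\mu\|_e=L$. We then identify $L$ with the boundary limsups of $\mathcal{B}_\alpha\psi$ and of the local averages. Throughout, the oscillation hypothesis \eqref{eq:3.2} is the only tool needed to pass from averages of $\psi$ to pointwise values.

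\emph{Step 1: $L<\infty$ and the local averages.} For fixed $r>0$ and $|z|$ near $1$, \eqref{eq:3.2} gives $|\psi(w)-\psi(z)|\le\varepsilon$ on $D(z,r)$. Hence
\[
\frac{\mu(D(z,r))}{A_\alpha(D(z,r))}
=(1+\alpha)\frac{\int_{D(z,r)}N_{\varphi,\alpha}\,dA}{A_\alpha(D(z,r))}
\]
lies within $\varepsilon$ of $\psi(z)$. The left side is bounded by the Carleson estimate \eqref{eq:2.4} in the proof of Lemma~\ref{lem2.3}, whose argument used only boundedness. So $\psi$ is bounded near $\mathbb{T}$, $L<\infty$, and \eqref{eq:3.4} follows at once from \eqref{eq:3.3}.

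\emph{Step 2: upper bound $\|T_\mu\|_e\le L$.} Use Proposition~\ref{prop3.6}: $\|C_\varphi\|_e^2=\lim_{R\to1^-}\sup_{\|g\|=1}\int_{|w|>R}|g|^2\psi\,dA_\alpha$. Given $\varepsilon>0$, choose $R$ such that $\psi\le L+\varepsilon$ on $\{|w|>R\}$, which is possible by the definition of limsup. Then each integral is at most $(L+\varepsilon)\|g\|^2_{A_\alpha^2}$. This step is direct.

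\emph{Step 3: lower bound $\|T_\mu\|_e\ge\limsup\mathcal{B}_\alpha\psi\ge L$.} Since $\kappa_z^\alpha\to0$ weakly as $|z|\to1$, for compact $K$ we have $\|T_\mu\|\ge\|T_\mu-K\|\ge\langle (T_\mu-K)\kappa_z^\alpha,\kappa_z^\alpha\rangle$. Letting $|z|\to1$ gives $\|T_\mu\|_e\ge\limsup\mathcal{B}_\alpha\psi(z)$. To compare $\mathcal{B}_\alpha\psi(z)$ with $\psi(z)$, fix a large $s$ and split the integral over $D(z,s)$ and its complement. On $D(z,s)$, oscillation gives $\psi\ge\psi(z)-\varepsilon$ for $|z|$ near $1$. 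By the computation \eqref{eq:2.10}, the kernel mass there is $1-(1-\tanh^2 s)^{\alpha+1}$. Hence $\mathcal{B}_\alpha\psi(z)\ge(\psi(z)-\varepsilon)(1-(1-\tanh^2s)^{\alpha+1})$, and letting $s\to\infty$ and $\varepsilon\to0$ yields $\limsup\mathcal{B}_\alpha\psi\ge L$.

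\emph{Step 4: closing the chain.} We also need $\limsup\mathcal{B}_\alpha\psi\le L$; this is the main obstacle, since it requires controlling the tail outside $D(z,s)$. The plan is to bound $\int_{\mathbb{D}\setminus D(z,s)}|\kappa_z^\alpha|^2\,d\mu$ exactly as in \eqref{eq:2.9}–\eqref{eq:2.11}. That gives $\lesssim(1-\tanh^2(s-5s_0))^{\alpha+1}$ uniformly in $z$, which tends to $0$ as $s\to\infty$, using only the Carleson bound. The inner part is at most $\psi(z)+\varepsilon$ by oscillation. Alternatively, the chain closes immediately from $\mathcal{B}_\alpha\psi(z)\le\|T_\mu\|_e+o(1)$ combined with Step 2. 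Either way we obtain $L\le\limsup\mathcal{B}_\alpha\psi\le\|T_\mu\|_e\le L$, which gives \eqref{eq:3.3}.
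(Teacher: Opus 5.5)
Your proof is correct, but it is organized differently from the paper's. After bounding $\psi$ near $\mathbb{T}$, the paper replaces $\psi$ by the truncation $q=\psi\mathbf 1_{\{|w|>\rho\}}$. This $q$ is bounded on all of $\mathbb{D}$, and $T_{\widehat\mu_{\varphi,\alpha}}-T_q$ is compact. (The bounding step uses the same kernel-testing idea as your Step~1.) The paper then proves the two-sided pointwise statement $\mathcal{B}_\alpha q(z)-q(z)\to0$, using the global bound on $q$ to control the Berezin integral over $\mathbb{D}\setminus D(z,s)$. From this and $\|T_q\|_e\le\sup_{|z|>R}q(z)$ it gets $\|T_q\|_e=\limsup q$. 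Finally it returns to $\psi$ via $\mathcal{B}_\alpha\psi-\mathcal{B}_\alpha q\to0$.

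You skip the truncation and the tail estimate. Your upper bound is the same restriction-to-an-annulus argument, packaged as Proposition~\ref{prop3.6}. For the lower bound you only need the one-sided inequality $\mathcal{B}_\alpha\psi(z)\ge\bigl(1-(1-\tanh^2s)^{\alpha+1}\bigr)(\psi(z)-\varepsilon)$, obtained by discarding the nonnegative contribution off $D(z,s)$. The chain $L\le\limsup\mathcal{B}_\alpha\psi\le\|T_\mu\|_e\le L$ then closes by itself. So Step~4 is superfluous: the inequality $\limsup\mathcal{B}_\alpha\psi\le L$ that you call the main obstacle already follows from Steps~2 and~3.

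What the paper's route gives in addition is the pointwise convergence $\mathcal{B}_\alpha\psi(z)-\psi(z)\to0$ as $|z|\to1^-$. The sandwich alone does not give this, though your first Step~4 alternative, combined with Step~1, would recover it.

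Two minor corrections. In Step~3, the inequality $\|T_\mu\|\ge\|T_\mu-K\|$ is false in general and is not needed. You only use $\|T_\mu-K\|\ge|\langle(T_\mu-K)\kappa_z^\alpha,\kappa_z^\alpha\rangle_\alpha|$, then $K\kappa_z^\alpha\to0$ in norm, then the infimum over $K$. Also, \eqref{eq:2.4} was derived for normalized $\varphi$ and radius $s<1$. The same kernel test works at any fixed radius and for arbitrary $\varphi$ with $C_\varphi$ bounded, because $\int_{\mathbb{D}}|g|^2\,d\widehat\mu_{\varphi,\alpha}=\|D_\varphi g\|_{A_\alpha^2}^2$ by Lemma~\ref{lem3.3}.
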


\begin{proof}
We first show that $\psi$ is bounded on some outer annulus.
By Lemma~\ref{lem3.3}, $T_{\widehat\mu_{\varphi,\alpha}}$ is bounded.
Fix $s_0>0$. Since $\psi\in\mathrm{VO}_{\partial}(\mathbb{D})$,
there exists $\rho\in(0,1)$ such that
\[
\psi(w)\geq\psi(z)-1,\qquad w\in D(z,s_0),\quad \rho<|z|<1.
\]
The normalized kernels have unit norm, so \eqref{eq:2.10}
gives
\[
\int_{D(z,s_0)}|\kappa_z^\alpha(w)|^2\,dA_\alpha(w)
=1-(1-\tanh^2s_0)^{\alpha+1}=:c_{s_0}>0.
\]
Consequently, for $\rho<|z|<1$,
\[
\begin{aligned}
c_{s_0}(\psi(z)-1)
&\leq\int_{D(z,s_0)}|\kappa_z^\alpha(w)|^2\psi(w)\,dA_\alpha(w)\\
&\leq\mathcal{B}_\alpha\psi(z)\\
&=\langle T_{\widehat\mu_{\varphi,\alpha}}\kappa_z^\alpha,
\kappa_z^\alpha\rangle_\alpha
\leq\|T_{\widehat\mu_{\varphi,\alpha}}\|.
\end{aligned}
\]
Thus
\[
\sup\limits_{\rho<|z|<1}\psi(z)
\leq1+\frac{\|T_{\widehat\mu_{\varphi,\alpha}}\|}{c_{s_0}}<\infty.
\]
We may therefore define
\[
q(w)=
\begin{cases}
\psi(w),&\rho<|w|<1,\\
0,&|w|\leq\rho.
\end{cases}
\]
By the compactness of the interior restriction used in
Proposition~\ref{prop3.6}, the operator
\[
T_{\widehat\mu_{\varphi,\alpha}}-T_q
=J^*\mathbf1_{\{|w|\leq\rho\}}J
\]
is compact, where $J:A_\alpha^2\to L^2(\widehat\mu_{\varphi,\alpha})$
is the inclusion map. Thus $\|C_\varphi\|_e^2=\|T_q\|_e$.

For fixed $s>0$, the disk $D(z,s)$ is contained in
$\{|w|>\rho\}$ when $|z|$ is sufficiently close to $1$.
Thus $q\in\mathrm{VO}_{\partial}(\mathbb{D})$.

Put $B=\sup\limits_{w\in\mathbb{D}}q(w)<\infty$. The normalized kernels
have unit norm, and the calculation in
\eqref{eq:2.10} gives
\[
\int_{\mathbb{D}\setminus D(z,s)}
|\kappa_z^\alpha(w)|^2\,dA_\alpha(w)
=(1-\tanh^2s)^{\alpha+1}.
\]
Splitting the Berezin integral over $D(z,s)$ and its complement,
we obtain
\[
|\mathcal{B}_\alpha q(z)-q(z)|
\leq\sup_{w\in D(z,s)}|q(w)-q(z)|
+2B(1-\tanh^2s)^{\alpha+1}.
\]
Since $q\in\mathrm{VO}_{\partial}(\mathbb{D})$ and $\alpha>-1$,
first letting $|z|\to1^-$ with $s$ fixed and then letting
$s\to\infty$ gives $\mathcal{B}_\alpha q(z)-q(z)\to0$.

The normalized kernels $\kappa_z^\alpha$ converge weakly to zero in
$A_\alpha^2$ as $|z|\to1^-$. Therefore, for every compact operator
$K$ on $A_\alpha^2$,
\[
\limsup_{|z|\to1^-}\mathcal{B}_\alpha q(z)
=\limsup_{|z|\to1^-}
\bigl|\langle(T_q-K)\kappa_z^\alpha,
\kappa_z^\alpha\rangle_\alpha\bigr|
\leq\|T_q-K\|.
\]
Taking the infimum over $K$ gives
$\limsup\limits_{|z|\to1^-}q(z)\leq\|T_q\|_e$.

Conversely, $T_{q\mathbf1_{\{|z|\leq R\}}}$ is compact for
$0<R<1$. Hence
\[
\begin{aligned}
\|T_q\|_e
&\leq\|T_q-T_{q\mathbf1_{\{|z|\leq R\}}}\|\\
&=\sup_{\|f\|_{A_\alpha^2}=1}
\int_{|z|>R}q(z)|f(z)|^2\,dA_\alpha(z)\\
&\leq\sup_{|z|>R}q(z).
\end{aligned}
\]
Letting $R\to1^-$ proves
$\|T_q\|_e=\limsup\limits_{|z|\to1^-}q(z)$.
Since $q=\psi$ near the boundary and
$T_{\widehat\mu_{\varphi,\alpha}}-T_q$ is compact,
\[
\mathcal{B}_\alpha\psi(z)-\mathcal{B}_\alpha q(z)
=\langle(T_{\widehat\mu_{\varphi,\alpha}}-T_q)
\kappa_z^\alpha,\kappa_z^\alpha\rangle_\alpha\longrightarrow0.
\]
This proves \eqref{eq:3.3}.

Finally,
\[
(1+\alpha)
\frac{\displaystyle\int_{D(z,r)}N_{\varphi,\alpha}(w)\,dA(w)}
{A_\alpha(D(z,r))}
=\frac{\displaystyle\int_{D(z,r)}\psi(w)\,dA_\alpha(w)}
{A_\alpha(D(z,r))}.
\]
Since $\psi\in\mathrm{VO}_{\partial}(\mathbb{D})$,
\[
\limsup_{|z|\to1^-}
\frac{\displaystyle\int_{D(z,r)}\psi(w)\,dA_\alpha(w)}
{A_\alpha(D(z,r))}
=\limsup_{|z|\to1^-}\psi(z).
\]
Thus \eqref{eq:3.3} gives \eqref{eq:3.4}.
\end{proof}

\medskip\noindent\textbf{Data availability.} This manuscript has no associated data.

\medskip\noindent\textbf{Competing interests.} The authors declare no
competing interests.

\end{document}